\DeclareMathOperator{\E}{\mathbb{E}}
\DeclareMathOperator*{\diag}{diag}
\renewcommand{\Pr}[2][]{\mathbb{P}_{#1} \left\{ #2 \rule{0mm}{3mm}\right\}}
\newcommand{\ip}[2]{\langle#1,#2\rangle}
\def \N {\mathbb{N}}
\def \P {\mathbb{P}}
\def \R {\mathbb{R}}
\def \LL {\mathcal{L}}
\def \MM {\mathcal{M}}
\def \BB {\mathcal{B}}
\def \ZZ {\mathcal{Z}}
\def \a {\alpha}
\def \b {\beta}
\def \e {\varepsilon}
\def \d {\delta}
\def \l {\lambda}
\def \tran {\mathsf{T}}
\def \onevector {{\bf 1}}
\DeclareMathOperator*{\argmax}{arg\,\! max}
\newtheorem{theorem}{Theorem}[section]
\newtheorem{corollary}[theorem]{Corollary}
\newtheorem{lemma}[theorem]{Lemma}
\newtheorem{remark}[theorem]{Remark}
\begin{document}

\begin{frontmatter}

\title{Estimating a network from multiple noisy realizations}
\runtitle{Estimating a network from multiple noisy realizations}


\author{\fnms{Can M.} \snm{Le}\ead[label=e1]{canle@ucdavis.edu}}
\address{Department of Statistics, University of California Davis, Davis, CA 95616, USA\\
\printead{e1}}
\author{\fnms{Keith} \snm{Levin}\ead[label=e2]{klevin@umich.edu}}
\address{Department of Statistics, University of Michigan, Ann Arbor, MI 48109, USA \\\printead{e2}}
\author{\fnms{Elizaveta} \snm{Levina}\ead[label=e3]{elevina@umich.edu}}
\address{Department of Statistics, University of Michigan, Ann Arbor, MI 48109, USA \\\printead{e3}}

\runauthor{Le et al.}

\begin{abstract}
Complex interactions between entities are often represented as edges in a network.    In practice, the network is often constructed from noisy measurements and inevitably contains some errors.
In this paper we consider the problem of estimating a network from multiple noisy observations where edges of the original network are recorded with both false positives and false negatives.  This problem is motivated by neuroimaging applications where brain networks of a group of patients with a particular brain condition could be viewed as noisy versions of an unobserved true network corresponding to the disease. 
The key to optimally leveraging these multiple observations is to take advantage of network structure, and here we focus on the case where the true network contains communities.  Communities are common in real networks in general and in particular are believed to be presented in brain networks.    
Under a community structure assumption on the truth, we derive an efficient method to estimate the noise levels and  the original network, with theoretical guarantees on the convergence of our estimates.   We show on synthetic networks that the performance of our method is close to an oracle method using the true parameter values, and apply our method to fMRI brain data, demonstrating that it constructs stable and plausible estimates of the population network.  
\end{abstract}

\begin{keyword}[class=MSC]
\kwd[Primary ]{62H12}
\kwd[; secondary ]{62H30, 62F12}
\end{keyword}

\begin{keyword}
\kwd{Noisy networks}
\kwd{stochastic block model, brain networks, EM algorithm}
\end{keyword}


\tableofcontents

\end{frontmatter}

\section{Introduction}
Networks provide a natural way to model many complex systems, and network data are increasingly common in many areas of application.   Statistical network analysis to date has largely focused on the case of observing a single network, without noise, and analyzing the observed network in order to learn something about its structure, for example, identifying communities.   The problem of community detection in particular,  in a single noiseless network, is very well studied and understood by now  (see \cite{Goldenberg2010,Fortunato2010,AbbeReview2017} for reviews of this topic and \cite{Amini.et.al.2013,Chin&Rao&Vu2015,Le&Levina&Vershynin2017,Gao&Ma&Zhang&Zhou2015} for some of the many important recent developments). Much effort in this field has focused on the analysis of exchangeable networks, where any permutation of nodes results in the same distribution of the edges \cite{Bickel&Chen2009,Karrer10,Choi&Wolfe2014,Olhede&Wolfe2014}.

In this paper, our focus is on applications where multiple noisy realizations are available rather than a single network, much like an i.i.d.\ sample in classical multivariate analysis, except our observations are networks rather than vectors. 
The particular application that motivated this work is neuroimaging, where a network of connections in the brain is constructed separately for each subject, and there is a sample of subjects available, e.g., people suffering from a mental illness.   Nodes in this context correspond to locations or regions of interest in the brain, and connections between nodes are measured in various ways depending on the technology used.  Here we focus on data from resting state fMRI brain imaging \cite{Taylor&Chen&et.al.2011,Taylor&Demeter&et.al.2014}, where time series of blood oxygen levels are recorded at multiple voxels in the brain while the subjects ``rest'' in the fMRI machine (see Section~\ref{subsec: brain network} for more details).  Inferring connections between nodes from this type of data invariably involves a lot of preprocessing (registration,  background subtraction, normalization, etc.), and is typically measured by computing Pearson correlations between the processed time series for each pair of nodes, although arguments have also been made for using partial correlations and more generally Markov random fields \cite{Narayan&Allen&Tomson2015,Narayan&Allen2016}.


However the connections between nodes are computed, they are then frequently thresholded in order to obtain a connectivity matrix with binary entries, from which various network summaries such as the average degree and the clustering coefficient can be computed and averaged over the sample to characterize the population \cite{Buckner.et.al.2009,Bassett.et.al.2012,Hosseini&Kesler2013,Klimm.et.al.2014}. These one-number summaries necessarily result in loss of information, and one may want to learn more about the prototypical brain network for a population of patients beyond one-number summaries. For instance, one may want to find regions of the brain consisting of similar voxels in terms of functional connectivity and comparing them to healthy controls, or compare levels of functional connectivity within known anatomical regions.   A natural question to ask then is how to estimate a population network adjacency matrix $A$ (an $n\times n$ matrix
where $A_{ij}=1$ if there is an edge between node $i$ and node $j$, and $A_{ij}=0$ otherwise) from a sample of noisy observations, with noise resulting from both preprocessing and natural individual variations.    In other words, we pose the question of how to compute the ``mean'' from a sample of $N$ independent noisy realizations $A^{(1)}, A^{(2)},\dots,A^{(N)}$ of an unknown underlying adjacency matrix $A$, while respecting and ideally taking advantage of the network structure of the problem instead of simply averaging the observed matrices. 

We next introduce basic notation to focus the discussion.  Since the underlying true $A$ is binary and so are the observations, the noise in each entry of $A$ can only be present in the form of false positive and false negative edges.   We assume that the entries of $A$ above the diagonal are generated independently (an assumption that certainly simplifies reality but enables analysis that has been found to give useful practical results in much of previous literature on networks), and that the noise is independent of $A$.
Let $P$ be the $n \times n$ symmetric matrix of false positive probabilities, and $Q$ the $n\times n$ symmetric matrix of false negative probabilities.
That is, for each $1 \le m \le N$ and $i<j$,
if $A_{ij}=1$ then $A^{(m)}_{ij}$ is drawn from $\mathrm{Bernoulli} (1-Q_{ij})$,
and if $A_{ij}=0$ then $A^{(m)}_{ij}$ is drawn from $\mathrm{Bernoulli} (P_{ij})$.
The entries above the diagonal of $A^{(m)}$  are independent, $A^{(m)}_{ij} = A^{(m)}_{ji}$,  and diagonal entries of $A^{(m)}$ are set to zero, though the latter is not important.
In other words, true edges $A_{ij} = 1$ are randomly removed with probabilities $Q_{ij}$
while non-edges $A_{ij} = 0$ are randomly replaced with false edges with probabilities $P_{ij}$.   For identifiability, we assume that all entries of $P$ and $Q$ are less than $1/2$.

In principle, each entry $A_{ij}$ of the underlying true network $A$ can be estimated separately from the corresponding entries $A_{ij}^{(m)}$, $1\le m \le N$.
However, this naive approach does not take advantage of any potential structure in $A$. Given that real networks typically exhibit a lot of structure, we can expect to gain by estimating the entries of $A$ jointly.    More specifically, we assume that the structure in $A$ takes the form of communities, frequently encountered in many real-world networks in general and in brain networks in particular
\cite{Power&Cohen&et.al.2011}.    We will model this structure in $A$ through one of the most commonly used network community models, the stochastic block model (SBM) \cite{Holland83}.   The SBM is a simple and easily tractable model which can also serve  as a basic building block in approximating a much larger family of network models, much in the same way that a step-wise constant function can be used to approximate any smooth function \cite{Olhede&Wolfe2014}.   Making this assumption about $A$ allows us to share information among edges while retaining the flexibility to fit a wide range of network data.

The SBM assumes that the network is generated by first drawing a vector of node labels
$c\in\{1,\dots,K\}^n$ from a multinomial distribution with parameter $\pi = \{\pi_1, \dots, \pi_K\}$.
The number of communities $K$ is often assumed to be known, or can be estimated by using one of several methods now available \cite{Chen&Lei2014,Wang&Bickel2015,Le&Levina2015}.
Edges between pairs of nodes $i,j$ are then drawn independently with probability $P(A_{ij} = 1) = B_{c_i c_j}$,
where $B$ is a $K\times K$ matrix of within and between communities edge probabilities.
Following the literature, we condition on $c$ and treat it as a fixed unknown vector from this point on.   Community detection under the SBM has been studied intensively in the last decade and many methods are available by now, e.g.,
\cite{NewmanPNAS,Bickel&Chen2009,Amini.et.al.2013,
Krzakala.et.al2013spectral,Le&Levina&Vershynin2017}, and many others.

We make a further assumption that the expectation $W=\E A$ of $A$ and the noise probability matrices $P$ and $Q$ share the same block structure.
That is, if $c_i=c_{i^\prime}$ and $c_j=c_{j^\prime}$ then
$P_{ij} = P_{i^\prime j^\prime}$ and $Q_{ij} = Q_{i^\prime j^\prime}$.
In other words, edges between nodes with the same patterns of connectivity are subject to the same noise levels. For the SBM, one can think of this assumption as the probability of making an error about an edge being a function of the probability of that  edge existing.  In many biological contexts, it is plausible to assume that the probability of a false negative is higher when the probability of an edge is small, as it is harder to detect, and conversely for an edge with high probability, the probability of a false negative might be low.

The main contribution of this work is an algorithm to estimate the true unobserved ``population'' adjacency matrix  $A$ by taking advantage of the community structure in both the network and the noise.  The algorithm works by first estimating the community structure of $A$ from an initial naive estimate, using an existing method
such as spectral clustering 
or pseudo-likelihood  \cite{Amini.et.al.2013}.  Then the estimated community structure is used in an EM-type algorithm to update the estimate of $A$ and the parameters of interest.    Results in Section~\ref{sec: numerical results} show that our method performs well on both simulated data
and functional connectomics brain data  \cite{Taylor&Chen&et.al.2011,Taylor&Demeter&et.al.2014}.
The method is computationally efficient because we can leverage existing fast algorithms for community detection in the first stage and the EM algorithm in the second stage only involves simple updates which converge quickly.  More complicated models of the relationship between the network and the noise are certainly possible and are left to future work, but even with this simple model we demonstrate conclusively that ``network-aware'' analyses of samples of networks, as opposed to ``massively univariate'' analyses that vectorize the adjacency matrices and ignore their network structure, are needed to take full advantage of the network nature of the data.

The problem we consider in this paper shares some similarity with the problem of estimating the edge probability matrix from independent network observations $A^{(1)},...,A^{(N)}$ studied in \cite{Tang&Ketcha&Vogelstein&Priebe&Sussman2016,Wang&Zhang&Dunson2017}. 
Assuming that $A^{(1)},...,A^{(N)}$ are identically distributed and $\E A^{(1)}$ is of low rank, the authors of \cite{Tang&Ketcha&Vogelstein&Priebe&Sussman2016} estimate $\E A^{(1)}$ by a low rank approximation $H$ of $N^{-1} \sum_{m=1}^N A^{(m)}$. In \cite{Wang&Zhang&Dunson2017}, the authors model the entrywise logit of $\E A^{(m)}$ as the sum of a baseline matrix $Z$ and an individual-specific matrix $D_m$ and propose a spectral method to estimate them. Note that in our setting, $\E A^{(1)}$ is a matrix with entries $\E A_{ij}^{(1)}=P_{ij}$ if $A_{ij} = 1$ and $\E A_{ij}^{(1)}=1-Q_{ij}$ if $A_{ij} = 0$. Since entries of $P$ and $Q$ are less than $1/2$, in principle one can threshold entries of $H$ or the estimate of $Z$ at $1/2$ to obtain an estimate of $A$. However, these are not good estimates because (i) $\E A^{(1)}$ is not a low-rank matrix, (ii) they are not designed specifically for estimating a binary matrix, and (iii) estimates of $P$ and $Q$ are required for a noise-dependent threshold. Therefore the problem of estimating a binary network must be treated differently, and it is the main focus of this work.

Finally, there is a connection between the problem we study in this paper and the problem of crowdsourcing  \cite{Dawid&Skene1979}.    Crowdsourcing aims to recover the latent labels of a set of items based on independent estimates of several workers; in our setting, (binary) $A_{ij}$ is the latent label of the item indexed by $(i,j)$ and $A^{(m)}_{ij}$ is an estimate of the $m$-th worker. A number of methods have been developed to address this problem, including SVD-based methods \cite{Ghosh2011WhoMT}, variational methods \cite{Liu&Peng&Ihler2012}, Bayesian inference \cite{Raykar.et.al.2010} and EM algorithms \cite{Dawid&Skene1979,Zhang&Chen&Zhou&Jordan2014}.    The two-stage procedure of \cite{Zhang&Chen&Zhou&Jordan2014} is especially relevant to our paper, where the labels are initialized by the method of moments and updated by the EM algorithm. 

Our setting corresponds to crowdsourcing if  we take the number of communities to be $K=1$, and ignore any network structure  in particular the fact that the $n(n-1)/2$ edge labels come from only $n$ nodes, and that these $n$ nodes form communities.    The setting with a general  community structure is much more challenging, because it requires estimating two layers of latent variables, the community labels and the edge values themselves.  Taking community structure into account is crucial for the method to be relevant in neuroimaging applications, and differs from the crowdsourcing setting in highly non-trivial ways.

\section{Optimal estimates and the role of noise}\label{sec:known parameters}
We start by deriving two estimators of $A$ when parameters $W,P,Q$ are known:
a maximum likelihood estimator and an estimator based on likelihood ratio tests.
These are not practical, but since they are provide optimal estimation error and test power, it is instructive
to understand their behavior as a function of noise level.  We will also
use these estimators as oracle benchmarks for comparisons, and
to derive the EM algorithm presented in the next section.

When $W,P$ and $Q$ are known and the only unknown is the underlying matrix
$A$, treated as fixed, we can estimate each entry $A_{ij}$
independently, since the only source of randomness is independent
noise.
To simplify notation, we fix a pair $(i,j)$ of nodes and denote $a = A_{ij}$, $a_m = A_{ij}^{(m)}$, $s=\sum_{m=1}^N a_m$, $w = W_{ij}$, $p = P_{ij}$ and $q = Q_{ij}$.

\subsection{Maximum likelihood estimation}\label{sec: MLE optimal}
The likelihood of $a$ given the data $a_1,...,a_N$ is
\begin{equation*}
   \LL(a; a_1, \dots, a_N) =
   \Big[ w \prod_{m=1}^N
   (1-q)^{a_m} q^{1-a_m} \Big]^{a} \cdot
   \Big[ (1-w) \prod_{m=1}^N
   p^{a_m} (1-p)^{1-a_m} \Big]^{1-a}.
\end{equation*}
Up to a constant, we can write the log-likelihood as
\begin{equation}
   \log \LL(a) \propto
   a \left(
   s \log \frac{(1-p)(1-q)}{pq}-
   \log \frac{1-w}{w}-
   N \log \frac{1-p}{q} \right).
\label{eq:loglike}
\end{equation}
Since $a$ can only
take on values of 0 or 1, the estimate will be determined by the sign
of the multiplier of $a$ in \eqref{eq:loglike}.
Therefore, the maximum likelihood estimator of $a$ is
\begin{equation}\label{eq: maximum likelihood estimator, known parameters}
  a^* =\mathbf{1}{ \{s \ge \mu \} },
  \quad \mathrm{ where } \quad
  \mu = \frac{ \log \frac{1-w}{w} + N \log \frac{1-p}{q}}
  {\log \frac{(1-p)(1-q)}{pq}}.
\end{equation}
To understand how the optimal estimate $a^*$ depends on the noise,
consider the estimation error of $a^*$, which has the form
\begin{equation}\label{eq: estimation error}
  \P (a^* \neq a) =
  w \ \P (s < \mu| a =1 )
  + (1 - w) \ \P ( s \ge \mu | a = 0).
\end{equation}
The probabilities are binomial:  conditional on $a=1$, $s$ is
  $\operatorname{Binomial}(N, 1-q)$, and conditional on $a = 0$,  $s$ is
  $\operatorname{Binomial}(N, p)$.  Since the threshold $t$ depends on
  $p$, $q$, and $w$, the dependence of the
  error on these parameters is somewhat complicated, but
  straightforward to compute.
Figure~\ref{Fig:MLErrorContour} shows the error $\P (a^* \neq a)$
as a function of $p$ and $q$.
When $p$ or $q$ increases and all other parameters are fixed, the estimation error increases.
This observation is confirmed by the following lemma (the proof is given in Appendix~\ref{ap: estimation error}).

\begin{lemma}[The role of noise]\label{lem: estimation error}
The estimation error $\P(a^*\neq a)$ defined by \eqref{eq: estimation error} is an increasing function of $p$ and $q$.
\end{lemma}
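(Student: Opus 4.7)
The plan is to sidestep the headache that the threshold $\mu$ itself depends on $p$ and $q$ by exploiting the fact that the MLE $a^{*}$ in \eqref{eq: maximum likelihood estimator, known parameters} is the Bayes-optimal decision rule for testing $a=0$ versus $a=1$ under the prior $\P(a=1)=w$. Concretely, the likelihood displayed before \eqref{eq:loglike} is exactly the joint probability $\P(a_{1},\dots,a_{N},a)$, so maximizing it in $a$ gives the MAP rule, which minimizes $\P(\hat a \neq a)$ over all deterministic rules based on the data $(a_{1},\dots,a_{N})$. Since $s=\sum_m a_m$ is a sufficient statistic and the MAP takes the form $\mathbf 1\{s\geq\mu\}$, the minimization in fact runs over all thresholds.

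With this in hand, the first step is to write
\begin{equation*}
g_{t}(p,q) \;:=\; w\,\P_{q}(s< t \mid a=1) \;+\; (1-w)\,\P_{p}(s\geq t \mid a=0),
\end{equation*}
so that the identity \eqref{eq: estimation error} together with Bayes optimality gives
\begin{equation*}
\P(a^{*}\neq a) \;=\; g_{\mu}(p,q) \;=\; \min_{t\in\mathbb{R}} g_{t}(p,q).
\end{equation*}
The point is that for each \emph{fixed} $t$ the map $p\mapsto g_{t}(p,q)$ depends on $p$ only through $\P_{p}(s\geq t\mid a=0)=\P(\mathrm{Bin}(N,p)\geq t)$, which is non-decreasing in $p$ by the standard stochastic ordering of binomials. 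Likewise $q\mapsto g_{t}(p,q)$ is non-decreasing in $q$, because $\P_{q}(s< t\mid a=1)=\P(\mathrm{Bin}(N,1-q)< t)$ is non-decreasing in $q$.

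The final step is the elementary observation that the pointwise infimum of a family of non-decreasing functions is itself non-decreasing: if $p<p'$, then for every $t$ one has $g_{t}(p',q)\geq g_{t}(p,q)\geq \min_{s}g_{s}(p,q)=\P(a^{*}\neq a)\big|_{(p,q)}$, and taking the infimum over $t$ on the left gives $\P(a^{*}\neq a)\big|_{(p',q)}\geq \P(a^{*}\neq a)\big|_{(p,q)}$. The same argument with $q<q'$ yields monotonicity in $q$.

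There is no real obstacle once the Bayes-optimality reformulation is in place; the possible pitfall is going the ``direct'' route and attempting to differentiate $g_{\mu(p,q)}(p,q)$ with respect to $p$ using \eqref{eq: maximum likelihood estimator, known parameters}, which introduces a nuisance $\partial\mu/\partial p$ term whose sign is not obvious. The envelope-style argument above uses exactly the fact that this derivative vanishes at the optimum, so it contributes nothing to the comparison.
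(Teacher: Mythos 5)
Your proof is correct, and it takes a genuinely different route from the paper. The paper argues directly on $f(w,p,q)=\P(a^*\neq a)$: it fixes the integer $k$ with $k<\mu<k+1$, uses the incomplete-beta representation of the binomial CDF to compute $\partial f/\partial q>0$ away from the finite set where $\mu$ is an integer, and then separately verifies continuity of $f$ at those crossing points (the potential jump $w\,\P(s=k\mid a=1)-(1-w)\,\P(s=k\mid a=0)$ vanishes precisely because $k=\mu$ there). Your envelope argument replaces all of this case analysis with the single observation that $\P(a^*\neq a)=\min_t g_t(p,q)$ by Bayes optimality of the MAP rule, that each $g_t$ is monotone in $(p,q)$ by stochastic ordering of binomials, and that a pointwise infimum of non-decreasing functions is non-decreasing; this is cleaner and sidesteps the $\partial\mu/\partial p$ nuisance entirely. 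What the paper's computation buys in exchange is \emph{strict} monotonicity (a strictly positive derivative off a finite set, plus continuity), whereas your argument as written only delivers that the error is non-decreasing. If strictness is wanted, you can recover it with one extra line: letting $t'$ be the optimal threshold at $(p',q)$, you have $\min_t g_t(p',q)=g_{t'}(p',q)>g_{t'}(p,q)\ge\min_t g_t(p,q)$ whenever $t'\in(0,N]$, i.e.\ whenever the optimal rule is nondegenerate --- and you should note that for degenerate thresholds ($\mu\le 0$ or $\mu>N$, which can occur for extreme $w$) the error is locally constant, so only weak monotonicity can hold there in any case.
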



\begin{figure}
  \centering
  \includegraphics[trim=130 40 100 20,clip,width=\textwidth]{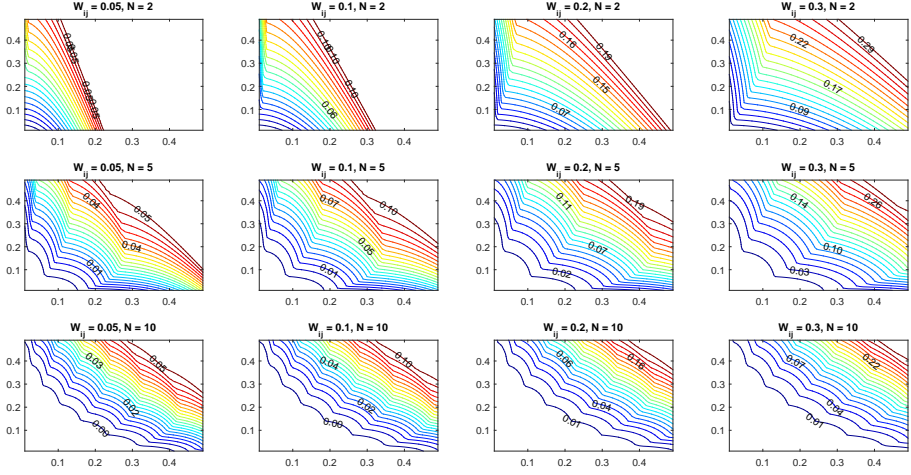}\\
  \caption{The contour of the estimation error of $A_{ij}^*$ as a function of
  $P_{ij}$ on the $x$-axis and $Q_{ij}$ on the $y$-axis.
  The errors shown on each plot are measured at coordinates
  $(0.4,0.4)$, $(0.2,0.4)$, $(0.3,0.2)$ and $(0.2,0.1)$.}
  \label{Fig:MLErrorContour}
\end{figure}

\subsection{Likelihood ratio tests and FDR}\label{sec: likelihood ratio test}
An alternative approach to estimating $a$ when all parameters are known is to perform a test.
Unlike maximum likelihood estimation, testing allows us to explicitly control the false discovery rate (FDR), which is often important in practice.

Consider the null hypothesis $a=0$ and the alternative hypothesis $a=1$.
Under the null, $s = \sum_{m=1}^N a_m$ follows $\operatorname{Binomial}(N,p)$;
under the alternative, $s$ follows $\operatorname{Binomial}(N,1-q)$.
For a given confidence level $\a$, let $T_\a$ be a likelihood ratio test with critical value $k_\a\in\N$ that accepts the null if $s < k_\a$ and accepts the alternative if $s> k_\a$; when $s=k_\a$, it accepts the alternative with a certain probability adjusted to achieve the level $\a$. The power $\gamma_\a$ of the test $T_\a$ is then also a function of $\a$.
Since
\begin{eqnarray*}
\Pr{T_\a\text{ rejects the null}} &=& \a\Pr{a=0} + \gamma_\a\Pr{a=1} = \a(1-w)+\gamma_\a w,\\
\Pr{T_\a\text{ falsely rejects the null}} &=& \a\Pr{a=0}= \a(1-w),
\end{eqnarray*} 
the false discovery rate $\xi_\a$ of $T_\a$ can be computed by
\begin{equation}\label{eq: FDR}
  \xi_\a = \frac{\a(1-w)}{\a(1-w) + \gamma_\a w}.
\end{equation}
We state a property of $\xi_\a$ that we will use to control the false discovery rate (the proof is given in Appendix~\ref{ap: estimation error}). 

\begin{lemma}[Monotonicity of false discovery rate]\label{lem: monotonicity of fdr}
Consider the likelihood ratio test $T_\a$ and the false discovery rate $\xi_\a$ of $T_\a$ defined by \eqref{eq: FDR}.
If $p,q\le 1/2$ then $\xi_\a$ is an increasing function of $\a$.
\end{lemma}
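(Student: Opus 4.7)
The plan is to rewrite $\xi_\a$ by dividing the numerator and denominator of \eqref{eq: FDR} by $\a(1-w)$, giving
\[
\xi_\a \;=\; \frac{1}{\,1 + \dfrac{w}{1-w}\cdot\dfrac{\gamma_\a}{\a}\,}.
\]
Since $w\in(0,1)$ is fixed, $\xi_\a$ is an increasing function of $\a$ if and only if the ratio $\gamma_\a/\a$ is a decreasing function of $\a$. So the problem reduces to a standard concavity property of the ROC curve traced out by the family of likelihood ratio tests $\{T_\a\}$.

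Under $H_0:a=0$, $s\sim\mathrm{Binomial}(N,p)$, and under $H_1:a=1$, $s\sim\mathrm{Binomial}(N,1-q)$. The hypothesis $p,q\le 1/2$ gives $p\le 1-q$, so the likelihood ratio $(\tfrac{1-q}{p})^s(\tfrac{q}{1-p})^{N-s}$ is nondecreasing in $s$; this both justifies the form of $T_\a$ (reject for large $s$) and, by the Neyman--Pearson lemma, identifies $T_\a$ as the most powerful test of level $\a$.

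Next I would establish that $\gamma_\a$ is a concave function of $\a\in[0,1]$. For any $\a_1,\a_2\in[0,1]$ and $\l\in[0,1]$, consider the randomized test that performs $T_{\a_1}$ with probability $\l$ and $T_{\a_2}$ with probability $1-\l$. Its level is $\l\a_1+(1-\l)\a_2$ and its power is $\l\gamma_{\a_1}+(1-\l)\gamma_{\a_2}$. By Neyman--Pearson, $T_{\l\a_1+(1-\l)\a_2}$ is at least as powerful, so
\[
\gamma_{\l\a_1+(1-\l)\a_2}\;\ge\;\l\gamma_{\a_1}+(1-\l)\gamma_{\a_2},
\]
which is concavity. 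Since $\gamma_0=0$, a standard convex-analytic observation then yields monotonicity of $\gamma_\a/\a$: for $0<\a_1<\a_2\le 1$, writing $\a_1=(\a_1/\a_2)\a_2+(1-\a_1/\a_2)\cdot 0$ and applying concavity gives $\gamma_{\a_1}\ge(\a_1/\a_2)\gamma_{\a_2}$, i.e.\ $\gamma_{\a_1}/\a_1\ge\gamma_{\a_2}/\a_2$. Combined with the first paragraph, this proves the lemma.

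I do not anticipate a substantive obstacle; the core content is just the concavity of the ROC curve for a monotone likelihood ratio family. The only place the assumption $p,q\le 1/2$ is actually used is to fix the direction of the likelihood ratio so that the test described in the statement (reject for large $s$) coincides with the genuine Neyman--Pearson LRT, without which the power $\gamma_\a$ could even fall below $\a$ and the concavity argument would not apply.
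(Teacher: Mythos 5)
Your proof is correct, but it takes a genuinely different route from the paper's. Both arguments begin with the same reduction---writing $\xi_\a = \bigl(1 + \tfrac{w}{1-w}\cdot\tfrac{\gamma_\a}{\a}\bigr)^{-1}$, so that monotonicity of $\xi_\a$ is equivalent to $\gamma_\a/\a$ being decreasing---but they diverge from there. The paper proceeds by direct computation: it writes $\a$ and $\gamma_\a$ explicitly in terms of the binomial tail sums and the randomization probability $\eta_\a$, eliminates $\eta_\a$, and obtains a closed-form expression for $\gamma_\a/\a$ in which the coefficient of $1/\a$ is a sum of terms each multiplied by $1-\bigl(\tfrac{pq}{(1-p)(1-q)}\bigr)^{m-k_\a}\ge 0$ (this is where $p,q\le 1/2$ enters); it then argues piecewise monotonicity on each interval where $k_\a$ is constant and patches the pieces together by continuity. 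You instead invoke the Neyman--Pearson lemma and the concavity of the ROC curve: the mixture-of-tests argument gives $\gamma_{\l\a_1+(1-\l)\a_2}\ge \l\gamma_{\a_1}+(1-\l)\gamma_{\a_2}$, and concavity together with $\gamma_0\ge 0$ yields that $\gamma_\a/\a$ is decreasing. You also correctly isolate where $p,q\le 1/2$ is used---it guarantees $p+q\le 1$, hence a nondecreasing likelihood ratio in $s$, so that the test described (reject for large $s$) really is the NP test. Your argument is cleaner and more general (it applies verbatim to any monotone-likelihood-ratio family, not just the binomial mixture here), at the cost of relying on the NP lemma for randomized tests; the paper's computation is more elementary and self-contained but tied to the specific binomial form. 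One cosmetic note: in the degenerate case $p=q=1/2$ the likelihood ratio is constant and $\gamma_\a/\a\equiv 1$, so the monotonicity is only weak there---but the same is true of the paper's bound, and the lemma is to be read in the weak sense.
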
 

It is easy to see that $\xi_\a$ takes values from zero to $1-w$ ($\gamma_\a$ tends to one as $\a$ tends to one). Therefore, for a fixed $\xi\in(0,1-w)$, by the monotonicity of $\xi_\a$, we can estimate $a$ by performing test $T_\a$ with $\a$ being the unique solution of equation \eqref{eq: FDR}.

Figure~\ref{fig: test power}
shows the power $\gamma_\a$ of $T_\a$ as a function of $p$ and $q$
when $N=10$, $w=0.2$ and the false discovery rate is fixed at $\xi_\a=0.05$. We see that $\gamma_\a$ decreases when either $p$ or $q$ increases and the other is fixed. Also, $\gamma_\a$ is close to one when both $p,q$ are small and $\gamma_\a$ is close to zero when both $p,q$ are large.

In Section~\ref{sec: algorithm}, we estimate unknown parameters via the EM algorithm and use the estimates as plugins for the unknown parameters to perform likelihood ratio tests.

\begin{figure}
\begin{center}
\includegraphics[trim=20 0 30 20,clip,width=0.65\textwidth]{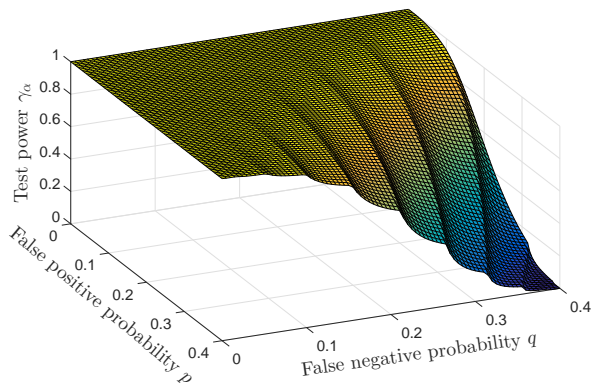}
\end{center}
\caption{Power of the likelihood ratio test $T_\a$ with $w=0.2$, $N=10$ and the false discovery rate fixed at $\xi_\a=0.05$.}
\label{fig: test power}
\end{figure}

\section{The estimation algorithm}\label{sec: algorithm}
We now return to the more realistic case  of unknown parameters, and derive an algorithm to estimate  $A$, $W$, $P$ and $Q$ at the same time.
We obtain an initial estimate of the underlying block structure
shared by $W$, $P$ and $Q$  from the matrix $S = \sum_{m=1}^N
A^{(m)}$.   Then we apply the EM algorithm to estimate submatrices of
$A$, $W$, $P$ and $Q$ associated with the estimated blocks. 

\subsection{Estimating the block structure} \label{sec: initial block estimation}
Let $\hat{A} = (\hat A_{ij})$ be the $n \times n$ matrix with
entries $\hat{A}_{ij} = \mathbf{1}(S_{ij} \ge N/2)$.
Under the assumption that entries of $P$ and $Q$ are at most $1/2$,
without which the problem becomes unidentifiable,
the matrix $\hat{A}$ is a consistent estimate of $A$.
We can estimate the block structure of $W$ by applying a community detection algorithm,
such as spectral clustering
\cite{Amini.et.al.2013,Joseph&Yu2013,Le&Levina&Vershynin2015} or the pseudo-likelihood method  \cite{Amini.et.al.2013},
to the initial estimate $\hat{A}$.
We will assume that the number of communities $K$ is known, as is
usually done in network literature, or alternatively $K$ can be first
estimated by one of several methods available
\cite{Chen&Lei2014,Wang&Bickel2015,Le&Levina2015}.
Having estimated the block structure, we condition on the node labels
and treat them as known,
so that the entries of $W$, $P$, and $Q$ are constant within
each estimated block.
With this assumption in mind, we now present the EM algorithm to recover
the sub-matrix of $A$ corresponding to each estimated block.

\subsection{The EM algorithm when node labels are known} \label{sec: em algorithm}
In this section, we derive an estimate of $A$ using the EM
algorithm, assuming that the vector of node labels $c$ is known.
To get the final estimate of $A$, we will replace $c$ with an estimate from Section~\ref{sec: initial block estimation}.

Recall that $A$ is generated from a block model with $K$ communities.
Therefore $W = \E A$ is a symmetric matrix with $K^2$ blocks (determined by
$c$), with equal entries within each block. To focus on one such block, we fix $k,l\in\{1,...,K\}$ with $k\neq l$ (the case $k=l$ is treated similarly) and consider the $(k,l)$ block according to $c$: 
$$
J = \{(i,j): c_i=k, c_j = l\}.
$$
By assumption of shared block structure, restrictions of $W$, $P$ and $Q$ to $J$ are matrices of constant entries, values of which we denote by
$w$, $p$, and $q$, respectively. Thus, $W_{ij} = w$, $P_{ij}=p$ and $Q_{ij}=q$ for all $(i,j)\in J$. 
The likelihood of $A_J$ and $A_J^{(m)}$  --  restrictions of $A$ and $A^{(m)}$ to $J$ -- takes the form
\begin{equation*}
  \LL =
  \prod_{(i,j) \in J}
  \left[w \prod_{m=1}^N  q^{1-A_{ij}^{(m)}}(1-q)^{A_{ij}^{(m)}}\right]^{A_{ij}}
  \left[(1-w) \prod_{m=1}^N p^{A_{ij}^{(m)}} (1-p)^{1-A_{ij}^{(m)}} \right]^{1-A_{ij}}.
\end{equation*}

For each $0 \le r \le N$, define $I_r = \{ (i,j) \in J: S_{ij} = r \}$.
Adding up the log-likelihoods of the independent $A_J^{(m)}$, $1\le m \le N$, and grouping the terms with $(i,j) \in I_r$, we obtain
\begin{eqnarray*}
  \log \LL &=& \sum_{r=0}^N \sum_{(i,j) \in I_r}
\Big\{  A_{ij} \log w + (1-A_{ij}) \log(1-w)\\
 &+& A_{ij} \big[(N-r)\log q + r \log (1-q) \big]  + (1-A_{ij}) \big[r\log p + (N-r)\log(1-p)\big] \Big\}.
\end{eqnarray*}
For each $(i,j)\in I_r$, define $\tau_r = \E[A_{i,j} | S_{ij}=r]$. Hereafter, we use $|R|$ to denote the cardinality of a set $R$.
Taking the conditional expectation of the log-likelihood given the
data, $ \tilde{\LL} = \mathbb{E}(\log \LL | \{A_J^{(m)}\}_{m=1}^N)$,
we obtain for the E-step
\begin{eqnarray*}
  \tilde{\LL} = \sum_{r=0}^N |I_r|
  &\Big\{&  \big[  \tau_r \log w +  (1-\tau_r) \log(1-w) \big] \\
  &+& \tau_r \big[ (N-r) \log q + r \log(1-q) \big]  \\
  &+& (1-\tau_r) \big[ r \log p + (N-r) \log (1-p)\big] \Big\}.
\end{eqnarray*}
The M-step involves finding estimates of $w,p,q$
that maximize $\tilde{\LL}$.
These estimates are unique, since $\tilde{\LL}$ is concave in $w,p,q$.
The partial derivative of $\tilde{\LL}$ with respect to $w$ has the form
\begin{eqnarray*}
  \frac{\partial\tilde{\LL}}{\partial w}
  &=& \sum_{r=0}^N |I_r| \left( \frac{\tau_r}{w} - \frac{1 - \tau_r}{1-w}\right).
\end{eqnarray*}
Setting the derivative to zero yields an estimate $\hat{w}$ of $w$:
\begin{equation}\label{eq: estimate of mu}
  \hat{w} = \frac{1}{|J|} \sum_{r=0}^N \tau_r |I_r|.
\end{equation}
Similarly, the estimates of $p$ and $q$ take the form
\begin{equation}\label{eq: estimates of p and q}
  \hat{p} = \frac{\sum_{r=0}^N r(1-\tau_r)|I_r|}{\sum_{r=0}^N N(1-\tau_r) |I_r|}, \quad
  \hat{q} = \frac{\sum_{r=0}^N \tau_r (N-r) |I_r|}{\sum_{r=0}^N \tau_r N |I_r|}.
\end{equation}
Since the $\tau_r$'s are unknown, we initialize by majority vote $\hat{\tau}_r = \mathbf{1}(r\ge N/2)$.  The Bayes rule gives
\begin{eqnarray*}\label{eq: formula of posterior}
  \tau_r &=& \P( A_{ij} = 1 | S_{ij} = r) \\
  &=& \frac{\P ( S_{ij} = r | A_{ij} = 1 ) \ \P (A_{ij}=1)}
  {\P ( S_{ij} = r | A_{ij} = 1 ) \ \P (A_{ij} = 1) +
   \P ( S_{ij} = r | A_{ij} = 0 ) \ \P (A_{ij} = 0)} \\
  &=& \frac{ w (1-q)^r q^{N-r} }
  { w (1-q)^r q^{N-r} + (1-w) p^r (1-p)^{N-r}}.
\end{eqnarray*}
Therefore, once $\hat{w}$, $\hat{p}$ and $\hat{q}$ are computed, we can update $\hat{\tau}_r$ by
\begin{equation}\label{eq: update of the posterior}
  \hat{\tau}_r
  =\frac{ \hat{w} (1-\hat{q})^r \hat{q}^{N-r} }
  { \hat{w} (1-\hat{q})^r \hat{q}^{N-r} + (1-\hat{w}) \hat{p}^r (1-\hat{p})^{N-r}}.
\end{equation}
The EM steps are then iterated until convergence.

\subsection{The complete EM algorithm with unknown labels}\label{subsec: em-type algorithm}
In Section~\ref{sec: em algorithm} we assume that $c$ is known and derive the EM algorithm for estimating $A$.
Since $c$ is unknown in practice, we first compute its estimate $\hat{c}$ using $\hat{A}$ as described in Section~\ref{sec: initial block estimation}.
We then repeat the following steps until convergence:
(i) treat $\hat{c}$ as the ground truth and estimate $A$ by applying
the EM algorithm described in Section~\ref{sec: em algorithm} for each block,
(ii) update $\hat{c}$ using the new estimate of $A$.


Recall that $S$ is the sum of observations $A^{(m)}$, $1\le m \le N$,
and $\hat{A}$ is the matrix with entries $\hat{A}_{ij} = \mathbf{1}(S_{ij} \ge N/2)$.
We initialize an estimate $\hat{c}$ of community labels $c$ by applying an existing clustering algorithm on $\hat{A}$. Although we can choose any consistent clustering algorithm, for concreteness, we will use spectral clustering. 
Similar to Section~\ref{sec: em algorithm}, we fix $k,l\in\{1,...,K\}$ and consider the $(k,l)$ block according to $\hat{c}$: 
$$
\hat{J} = \{(i,j): \hat{c}_i=k, \hat{c}_j = l\}.
$$
Within block $\hat{J}$, we estimate entries of $W$, $P$ and $Q$ by $\hat{w}$, $\hat{p}$ and $\hat{q}$, respectively, and compute them as follows.
Denote
$
\hat{I}_r = \{ (i,j)\in \hat{J}: S_{ij} = r \}
$.
Initialize $\hat{\tau}_r = \mathbf{1}(r\ge N/2)$ and repeat $T$ times:
\begin{enumerate}
  \item Compute $\hat{w}$, $\hat{p}$, and $\hat{q}$ by 
  $$
  \hat{w} = \frac{1}{|\hat{J}|} \sum_{r=0}^N \hat{\tau}_r |\hat{I}_r|,
  $$
  and 
  $$
  \hat{p} = \frac{\sum_{r=0}^N r(1-\hat{\tau}_r)|\hat{I}_r|}{\sum_{r=0}^N N(1-\hat{\tau}_r) |\hat{I}_r|}, \quad
  \hat{q} = \frac{\sum_{r=0}^N (N-r)\hat{\tau}_r  |\hat{I}_r|}{\sum_{r=0}^N N \hat{\tau}_r  |\hat{I}_r|}.
  $$
  \item Using current estimates $\hat{w}$, $\hat{p}$, and $\hat{q}$, update the posterior $\hat{\tau}_r$ 
  $$
  \hat{\tau}_r = \frac{ \hat{w} (1-\hat{q})^r \hat{q}^{N-r} }
  {\hat{w} (1-\hat{q})^r \hat{q}^{N-r} + (1-\hat{w}) \hat{p}^r (1-\hat{p})^{N-r}}.
  $$
  \item Return to step (1) unless the parameter estimates have converged.
  \item Update the $\hat{J}$ block of $\hat{A}$ by $\hat{A}_{ij}=\mathbf{1}\{\hat{\tau}_r\ge 1/2\}$.
  \item Update the label estimate $\hat{c}$ by applying spectral clustering on current $\hat{A}$.
\end{enumerate}

In practice, we obtain reasonable results with only a few updates of $\hat{c}$.
The EM updates in steps (1)--(3) also converge quickly given a good estimate of the community label.
For all simulations in Section~\ref{sec: numerical results}, we set $T=2$ and the number of EM iterations to be 20.

\begin{remark}
We note that the alternation of EM updates with community label updates
in the algorithm above leaves something to be desired, in that
it would be preferable to have a single EM algorithm that jointly
optimizes $\hat A, \hat w$ and $\hat c$.
Of course, efforts toward such an algorithm immediately run up against the
well-known fact that the natural EM update in the SBM is computationally
intractable.
In light of this, one might consider
adapting the pseudo-likelihood method proposed in \cite{Amini.et.al.2013},
but this approach only solves the problem of updating
$\hat c$ and $\hat w$ based on (an estimate of) $A$.
Incorporating the observed networks $A^{(1)},A^{(2)},\dots,A^{(N)}$
is non-trivial in light of the fact that these observed networks
are dependent through $A$.
The development of a more principled update procedure,
using pseudo-likelihood or other similarly-motivated approximations
such as variational methods or profile-likelihood,
is a promising avenue for future research,
but one which we do not pursue further here.
\end{remark}

\begin{remark}
Our algorithm is initialized by the majority vote instead of the method of moments that is often used in crowdsourcing \cite{Zhang&Chen&Zhou&Jordan2014}. While the two initializations may have different accuracy, we have found through simulations that there is not much difference once EM has been applied, and most of the time EM improves substantially over the initial value, whichever method is used to initialize. We believe this is because by assuming the block structure, we are able to leverage the information shared among many entries within each block. For simplicity, we only use the majority vote initialization in this paper.
\end{remark}

\subsection{A theoretical guarantee of convergence}
We focus our theoretical investigation of convergence properties on the case $T=1$.
Before stating the result, we need to introduce further notation.
Recall that $\hat{c}$ is the estimate of the label assignment $c$ output by spectral clustering.
Following \cite{Joseph&Yu2013}, we measure the error between $\hat{c}$ and $c$ by
\begin{equation}\label{eq: community detection error}
  \gamma(c,\hat{c}) = \min_{\tilde{c}}\max_{1\le k \le K} \frac{|\{i:\hat{c}_i=k,\tilde{c}_i\neq k\}|+|\{i:\tilde{c}_i=k,\hat{c}\neq k\}|}{|\{i: \tilde{c}_i = k\}|},
\end{equation}
where the minimum is over all $\tilde{c}$ obtained from $c$ by permuting labels of $c$. 

For $x,y\in(0,1/2)$, define
\begin{eqnarray}\label{eq: functions}
\label{eq: function h}  h(x) &:=& \frac{\log(2-2x)}{\log(2-2x) -\log (2x)}, \\
\label{eq: function phi}  \phi(x) &:=& x - h^{-1}\left(\frac{x}{2}+\frac{h(x)}{2}\right), \\
\label{eq: function varphi}  \quad \varphi(x,y) &:=& \Big( x-h(x)\Big)^2\Big( y-h(y)\Big)^2,
\end{eqnarray}
where $h^{-1}$ is the inverse of $h$ and the graph of $h^{-1}$ is shown in Figure~\ref{fig:neighborhood}.
A simple analysis shows that $h$ is an increasing function, $h(x)\ge x$ for every $x\in(0,1/2)$, $\lim_{x\rightarrow 0}h(x)=0$ and $\lim_{x\rightarrow 1/2} h(x)=1/2$. This implies $\phi(x)\ge 0$ for every $x\in(0,1/2)$ and $\lim_{x\rightarrow 0}\phi(x)=\lim_{x\rightarrow 1/2}\phi(x)=0$; similarly, $\lim_{x\rightarrow 0}\varphi(x)=\lim_{x\rightarrow 1/2}\phi(x)=0$. 
For every $\delta\in (0,1/4)$, denote
\begin{equation}\label{eq: R delta}
  R_\delta := \left\{\theta = (x,y,z)^\tran: \delta \le y,z \le 1/2 - \delta \ \text{and} \ \delta \le x  \le 1-\delta \right\}.
\end{equation}

We can now formalize a convergence result for 
the algorithm in Section~\ref{subsec: em-type algorithm}.  
The following theorem establishes exponential convergence of the parameter
estimates for $P,Q$ and $W$.
A proof can be found in Appendix~\ref{ap: convergence}.

\begin{theorem}[Convergence of the EM algorithm]\label{thm: guarantee of EM algorithm}
Consider the algorithm in Section~\ref{subsec: em-type algorithm} with $T=1$. Fix $k,l\in \{1,...,K\}$ and consider the $(k,l)$ block $J = \{(i,j): c_i=k,c_j=l\}$ of size $n_k\times n_l$. Denote by $w$, $p$ and $q$ the common values of entries of $W$, $P$ and $Q$ on $J$, respectively. Further, let $\theta=(w,p,q)^\tran$ and $\theta^t$ be the estimate of $\theta$ after repeating steps (1) through (3) a total of $t$ times.
Assume that $\theta \in R_\delta$ and
\begin{eqnarray*}
  N &\ge& \frac{C}{\delta^2}\max\left\{\log\frac{1}{\delta\phi(p)}, \ \log\frac{1}{\delta\phi(q)},\frac{\log(1/\d)}{\varphi(p,q)}\right\}, \\
  n_k n_l &\ge& \frac{C r^2 N}{\delta^3}\max\left\{\frac{1}{\delta^2},\frac{1}{\phi^2(p)}, \frac{1}{\phi^2(q)}\right\}, \\
  \gamma^2(\hat{c},c) &\le& \frac{\delta}{C}  \max\left\{\delta, \phi(p), \phi(q) \right\},
\end{eqnarray*}
where $C$ is a sufficiently large constant. Then with probability at least $1-\exp(-r)$, 
\begin{eqnarray*}
  \|\theta^t-\theta\| &\le& \exp\left(- t\left[N\delta \varphi(p,q)-\log\frac{N}{\delta^4}\right]\right)\cdot\|\theta^0-\theta\|
  + \frac{\gamma^2(\hat{c},c)}{\delta} + \frac{r\Phi}{\delta},
\end{eqnarray*}
where 
$$
\Phi := \min\left\{\log^2\left(\frac{1}{\delta}-1\right) \sqrt{\frac{N}{n_kn_l}}, \ \ \frac{1}{\d}\cdot\big(1+\d\big)^{-N\varphi(p,q)}+\frac{1}{\sqrt{\d n_kn_l}}\right\}.
$$
\end{theorem}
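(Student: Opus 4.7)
My plan is to adapt the standard population-versus-sample analysis of EM to this setting, with an additional error term accounting for the use of estimated labels $\hat c$ in place of $c$. Write $\theta^\star=(w,p,q)^\tran$ and define the \emph{population} EM operator $M$ as the output of one EM step (equations \eqref{eq: estimate of mu}--\eqref{eq: update of the posterior}) in which the empirical ratios $|\hat I_r|/|\hat J|$ are replaced by the true probabilities $\mathbb{P}_{\theta^\star}(S_{ij}=r)$ and the correct labels $c$ are used; a direct calculation confirms $M(\theta^\star)=\theta^\star$. Let $\hat M$ denote the empirical operator actually run by the algorithm. Writing $\theta^{t+1}=\hat M(\theta^t)$ and applying the triangle inequality
$$
\|\theta^{t+1}-\theta^\star\|\;\le\;\|M(\theta^t)-M(\theta^\star)\|+\|\hat M(\theta^t)-M(\theta^t)\|
$$
reduces the theorem to (i) a contraction estimate for $M$ and (ii) a concentration estimate for $\hat M-M$, followed by a routine recursion.

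The contraction step is the main obstacle. I would show $\|M(\theta)-\theta^\star\|\le\rho\,\|\theta-\theta^\star\|$ on $R_\delta$ with $\rho=\exp(-N\delta\varphi(p,q)+\log(N/\delta^4))$ by bounding the operator norm of the Jacobian $\partial M/\partial\theta$. The crucial observation is that the E-step posterior
$$
\tau_r(\theta)\;=\;\frac{w(1-q)^r q^{N-r}}{w(1-q)^r q^{N-r}+(1-w)p^r(1-p)^{N-r}}
$$
collapses exponentially fast to $\{0,1\}$ outside a narrow window around a threshold $r^\star/N\approx h(p)$ on the null side and $r^\star/N\approx h(q)$ on the alternative --- this is exactly what the functions $h$, $\phi$, and $\varphi$ are designed to encode. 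Differentiating the updates \eqref{eq: estimate of mu}--\eqref{eq: estimates of p and q}, only indices $r$ in a window of width $O(\sqrt{N/\delta})$ around the threshold contribute meaningfully; a Chernoff bound for $\operatorname{Binomial}(N,p)$ and $\operatorname{Binomial}(N,1-q)$ then shows that the binomial mass in this window is at most $(1+\delta)^{-N\varphi(p,q)}$, the exponent being essentially the Bernoulli KL-divergence between $p$ (resp.\ $1-q$) and the threshold $h(p)$ (resp.\ $h(q)$). This yields the displayed rate after absorbing polynomial factors coming from the $\tau_r$ derivatives into the $\log(N/\delta^4)$ correction.

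For (ii), I would split $\hat M(\theta)-M(\theta)$ into a sampling piece and a label-mismatch piece. The sampling piece is handled by applying Bernstein's inequality to each of the weighted sums in \eqref{eq: estimate of mu}--\eqref{eq: estimates of p and q} and taking a union bound (of mass $e^{r}$) over a fine net in $R_\delta$, producing the error $r\Phi/\delta$; the two expressions inside the minimum defining $\Phi$ correspond to (a) a crude Bernstein bound of order $\log^2(1/\delta-1)\sqrt{N/(n_kn_l)}$ and (b) a sharper variance estimate exploiting that $\tau_r$ is itself exponentially small outside the threshold window, so only a few $r$ contribute non-negligible variance. The label-mismatch piece uses that $\hat J\triangle J$ has cardinality $O(\gamma(\hat c,c)\,n_kn_l)$; propagating this $O(\gamma)$-fraction of corrupted entries through the ratios in \eqref{eq: estimates of p and q}, whose denominators are bounded below by $\Omega(\delta\,n_kn_l)$ since $\theta\in R_\delta$, contributes $O(\gamma^2(\hat c,c)/\delta)$ --- the extra factor of $\gamma$ arises because on $\hat J\triangle J$ the posterior itself is biased with magnitude $O(\gamma)$.

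Combining (i) and (ii) gives the one-step recursion $\|\theta^{t+1}-\theta^\star\|\le\rho\,\|\theta^t-\theta^\star\|+\epsilon_{\mathrm{stat}}+\epsilon_{\mathrm{label}}$, which iterates to the claimed bound once the hypothesis $N\gtrsim \delta^{-2}\log(1/\delta)/\varphi(p,q)$ is invoked to force $\rho\ll 1$ and absorb the geometric series $\sum_{s\ge 0}\rho^s=O(1)$ into constants. The heart of the argument is the contraction step: pinning down the precise exponent $N\varphi(p,q)$ requires identifying the correct threshold $r^\star$ for each coordinate of $M$ and then extracting the sharp Chernoff constant, which is where the combinatorial definitions of $h$, $\phi$, and $\varphi$ have to be verified to line up; the concentration and label-propagation bounds are technical but follow standard lines.
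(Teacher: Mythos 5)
Your overall architecture matches the paper's: a population EM operator $M$ with fixed point $\theta$, the one-step decomposition $\|\theta^{t+1}-\theta\|\le\|M(\theta^t)-M(\theta)\|+\|\hat M(\theta^t)-M(\theta^t)\|$, a contraction bound driven by the exponential collapse of the posterior $\tau_r$ away from thresholds encoded by $h$, $\phi$, $\varphi$ (the paper proves this via the mean value theorem along $\theta+t(\theta'-\theta)$ and Hoeffding, in the style of Balakrishnan--Wainwright--Yu, which is essentially your Jacobian bound), the two-regime minimum in $\Phi$, the label-mismatch term, and the final recursion. The main technical divergence is in the uniform deviation bound: you propose Bernstein plus a union bound over a net of $R_\delta$, whereas the paper uses symmetrization and Talagrand's comparison theorem applied to the Lipschitz map $t\mapsto 1/(1+e^t)$ composed with the linear statistic $Z_{ij}^\tran\eta(\theta')$; both routes are viable, though the net argument would need care to control the Lipschitz constant of $\theta'\mapsto F_{\theta'}$ in $N$. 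Your route to the $\gamma^2(\hat c,c)$ term (a bias of order $\gamma$ on a symmetric difference of order $\gamma$) also differs from the paper's, which bounds $|\hat J\triangle J|$ directly by $\gamma^2|J|$ and the integrands by constants.

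There is, however, one genuine gap. The contraction does \emph{not} hold uniformly on $R_\delta$ as you assert: the paper's contraction lemma requires the current iterate to satisfy $(p',q')\in U_\e(p,q)$, a specific basin of attraction around the true noise parameters defined through $h^{-1}$, and without this restriction the exponential factor $\exp(-N\delta\e^2\varphi(p,q))$ cannot be extracted (your own heuristic shows why: the gap between the binomial mean $pN$ and the threshold $H(p_t,q_t)N$ must be bounded below along the whole segment joining $\theta'$ to $\theta$, which fails for arbitrary $\theta'\in R_\delta$). Consequently a necessary component of the proof is to show that the majority-vote initialization $\theta^0$ lands in $R_\delta\cap U_{1/2}(p,q)$ with high probability, and that the iterates remain in this basin; this is exactly where the hypotheses involving $\log\frac{1}{\delta\phi(p)}$, $\log\frac{1}{\delta\phi(q)}$, $1/\phi^2(p)$, $1/\phi^2(q)$ and the bound on $\gamma^2(\hat c,c)$ are consumed, and the paper devotes a separate lemma (validity of initial parameter estimates) to it, with its own concentration analysis of $|I\cap J_0|$, $|I\cap J_1|$ and $\sum_{(i,j)\in J_0}S_{ij}$. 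Your proposal never verifies this, so as written the recursion cannot be closed.
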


The error bound in Theorem~\ref{thm: guarantee of EM algorithm} depends critically on $\delta$ (note that $\phi(x)$ tends to zero as $x\rightarrow 0$); a similar dependence of the error bound on $\delta$ appears in the crowdsourcing literature (see, e.g. \cite[ Theorems 1 and 2]{Zhang&Chen&Zhou&Jordan2014}).
As $\delta$ becomes smaller, i.e., $(w,p,q)^\tran$ gets closer to the boundary of the set $(0,1)\times(0,1/2)^2$, the problem of estimating parameters becomes harder, and therefore a larger sample size $N$ is required. In the ultra-sparse regime when the average degree does not grow with $n$, $\delta = O(1/n)$ and $N$ must grow linearly in $n$ in order to maintain a meaningful error bound.
 Although we do not focus on optimizing the dependence on $\delta$, the simulations in Section~\ref{sec: numerical results} suggest that the bound is  not tight in terms of $\delta$; empirically, the algorithm still performs reasonably well when networks are relatively sparse. 

The error bound in Theorem~\ref{thm: guarantee of EM algorithm} consists of three terms. The first term goes to zero exponentially fast as $t\rightarrow\infty$ when $N$ is sufficiently large.
The second term depends on the error $\gamma(\hat{c},c)$ in estimating communities, which is essentially proportional to the inverse of the expected node degree of $A$ when the community signal is sufficiently strong. This can be easily shown using existing results on community detection (see, e.g., \cite{Le&Levina&Vershynin2017}), and we do not develop this further in this paper.  
The last term is a statistical error of order $O(K\min\{\sqrt{N}/n,1/n+\exp(-N)\})$ if all communities are of similar sizes.

Theorem~\ref{thm: guarantee of EM algorithm} implies consistency of the
estimates of $W,P$ and $Q$, as well as vanishing fraction of incorrectly estimated edges of the latent adjacency matrix $A$,
provided the number of iterations $t$, the number of networks $N$
and the block sizes all grow suitably quickly.
\begin{corollary}
For fixed $W,P,Q \in \R^{K \times K}$ and
under suitable growth assumptions on the number of iterations $t$,
the number of networks $N$ and the block sizes $\{n_k\}_{k=1}^K$,
the algorithm of Section~\ref{subsec: em-type algorithm} yields consistent
estimates of $W,P$ and $Q$.
Under slightly stronger growth conditions (essentially that
the number of networks $N$ cannot grow too quickly), the 
estimates $\{ \hat{ \tau }_r \}$ converge to the true $\{ \tau_r \}$
as defined in Section~\ref{sec:known parameters},
i.e., the estimate furnished by the algorithm in
Section~\ref{subsec: em-type algorithm} converges to the
likelihood ratio test derived in Section~\ref{sec: MLE optimal}.
\end{corollary}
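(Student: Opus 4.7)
The plan is to apply Theorem~\ref{thm: guarantee of EM algorithm} block by block, drive each term in its error bound to zero by choosing the growth rates of $t$, $N$, and $\{n_k\}_{k=1}^{K}$ appropriately, and then deduce convergence of $\hat\tau_r$ by a continuous-mapping-type argument. Since $W,P,Q$ are fixed with all entries of $P,Q$ strictly less than $1/2$ and all entries of $W$ strictly in $(0,1)$, there is a constant $\delta_0>0$ such that the triple $\theta_{kl}=(w_{kl},p_{kl},q_{kl})^\tran$ lies in $R_{\delta_0}$ for every block $(k,l)$; fixing $\delta = \delta_0$ makes $\varphi(p,q)$, $\phi(p)$, and $\phi(q)$ positive constants depending only on $W,P,Q$.

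For the first claim, apply Theorem~\ref{thm: guarantee of EM algorithm} to each block with the free confidence parameter set to, say, $\log n$, and take a union bound over the $K^2$ blocks. Once $N\delta\varphi(p,q) \ge 2\log(N/\delta^4)$, the exponential term is bounded by $\exp(-tN\delta\varphi(p,q)/2)\,\|\theta^0-\theta\|$, which vanishes as $t\to\infty$. The community-detection term $\gamma^2(\hat c,c)/\delta$ vanishes because $\hat A$ agrees with $A$ entrywise with error at most $e^{-cN}$ for some constant $c>0$ (Chernoff applied to $\operatorname{Bin}(N,p)$ and $\operatorname{Bin}(N,1-q)$ with $p,q<1/2$), so standard consistency results for spectral clustering on the SBM (e.g., \cite{Le&Levina&Vershynin2017}) give $\gamma(\hat c,c)\to 0$ under mild growth of block sizes and degrees. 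Finally $r\Phi/\delta$ vanishes once $n_kn_l \gg N\log^2 n$, using $\Phi \le \log^2(1/\delta-1)\sqrt{N/(n_kn_l)}$. Combining these, $\hat\theta_{kl}\to\theta_{kl}$ in probability for every block, so $\hat W,\hat P,\hat Q$ are consistent.

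For the second claim, a direct computation starting from $\tau_r = w(1-q)^r q^{N-r}/[w(1-q)^r q^{N-r} + (1-w)p^r(1-p)^{N-r}]$ gives
\[
\log\frac{\hat\tau_r}{1-\hat\tau_r}-\log\frac{\tau_r}{1-\tau_r} = \log\frac{\hat w/(1-\hat w)}{w/(1-w)} + r\log\frac{(1-\hat q)/\hat p}{(1-q)/p}+(N-r)\log\frac{\hat q/(1-\hat p)}{q/(1-p)}.
\]
Since each log-ratio on the right is $O(\|\hat\theta-\theta\|)$ (denominators bounded away from zero by $\delta_0$), the left side is $O(N\|\hat\theta-\theta\|)$ uniformly in $r\in\{0,\dots,N\}$. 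Hence $\hat\tau_r\to\tau_r$, and the plug-in rule $\mathbf{1}\{\hat\tau_r\ge 1/2\}$ matches the MLE $a^*$ from \eqref{eq: maximum likelihood estimator, known parameters}, as soon as $N\|\hat\theta-\theta\|\to 0$. Re-reading the bound of Theorem~\ref{thm: guarantee of EM algorithm} under this stronger requirement forces $n_kn_l \gg N^3 \log^2 n$, $N\gamma^2(\hat c,c)\to 0$, and $t$ chosen so that the exponential term is $o(1/N)$; these are the ``slightly stronger'' growth assumptions referenced in the corollary.

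The main obstacle is exactly this $N$-fold amplification in the second claim: because $\tau_r$ is built from an $N$-fold product of parameters, an error of size $\varepsilon$ in $(w,p,q)$ becomes an error of order $N\varepsilon$ in the logit of $\tau_r$, which is why one needs $n_kn_l$ to dominate $N^3$ rather than merely $N$. All other steps are routine: a blockwise application of Theorem~\ref{thm: guarantee of EM algorithm} combined with an off-the-shelf spectral-clustering consistency bound.
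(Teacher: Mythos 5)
Your proposal is correct and follows the route the paper intends: the corollary is stated as an immediate consequence of Theorem~\ref{thm: guarantee of EM algorithm} with no separate proof given, and your argument is the natural fleshing-out --- fix $\delta$ from the entries of $W,P,Q$, apply the theorem blockwise, and send each of the three error terms to zero under the stated growth conditions. Your logit computation for the second claim, showing that an error $\varepsilon$ in $(w,p,q)$ inflates to $O(N\varepsilon)$ in the logit of $\tau_r$ and hence forces the stronger requirement $N\|\hat\theta-\theta\|\to 0$, is exactly the right explanation of the corollary's phrase ``slightly stronger growth conditions'' (though using the second form of $\Phi$ in the theorem would relax your $n_k n_l \gg N^3\log^2 n$ to roughly $n_k n_l \gg N^2\log^2 n$).
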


\begin{remark}
While Theorem~\ref{thm: guarantee of EM algorithm} is stated
for binary $A$ and $\{ A^{(m)} \}_{m=1}^N$,
a generalization to weighted graphs and a broader class of
edge error models is possible.
Analogues to Theorem~\ref{thm: guarantee of EM algorithm}  can be obtained provided that spectral clustering
of $A$ recovers most community labels correctly under the assumed model,
the observed matrices $\{ A^{(m)} \}_{m=1}^N$ concentrate about $A$,
and the edge distributions for $A_{i,j}$ and $\{ A^{(m)}_{i,j} : m=1,2,\dots,N; i,j =1,2,\dots, n \}$ are well-behaved.   More details are given in Appendix~\ref{ap: weighted}.
\end{remark}

%

\section{Numerical results}\label{sec: numerical results}
In this section we empirically compare performance of several estimators of $A$:
the ``naive'' majority-vote estimate $\hat{A}$ described in
Section~\ref{sec: initial block estimation} (MV), which estimates each
entry of $A$ separately;
the EM estimate we proposed in Section~\ref{subsec: em-type
  algorithm} (EM);
and, in simulations, the oracle estimate described in Section~\ref{sec: MLE optimal}
which uses known parameter values of $W$, $P$, and $Q$
(OP).
To control the false positive rate, we also consider variants EM[T] and OP[T] of EM and OP.
Assuming that parameters are known, OP[T] estimates $A$ by performing the likelihood ratio test on each entry of $A$, as discussed in Section~\ref{sec: likelihood ratio test}.
EM[T] first estimates parameters by EM and then plugs them in as true parameters to perform the likelihood ratio test.
We set the false discovery rate to be $0.05$ for both EM[T] and OP[T].

As discussed in the Introduction, one can obtain an estimate of $A$ by thresholding (at $1/2$) the entries of the low-rank estimate of $\E A^{(1)}$ proposed by \cite{Tang&Ketcha&Vogelstein&Priebe&Sussman2016}.
However, this does not yield a good estimate of $A$ and in fact produces very large errors, on a different scale from all other methods. As a result, we omit it from comparisons in order to be able to plot all the other errors together at an appropriate scale.

For our main algorithm (described in Section~\ref{subsec: em-type algorithm}), we set the number of outer loops to $T = 2$ and the number of EM iterations to 20. This means the algorithm first estimates the community structure using $\hat{A}$ as the input. Once node labels are computed, it estimates all parameters of the model, including the posterior $\hat{\tau}$, by running 20 iterations of the inner loop. The posterior $\hat{\tau}$ is then thresholded to obtain an estimate of the original network $A$. This estimate is used in the second run of the outer loop to update the node labels and subsequently re-estimate all parameters and $A$.   

We first test the methods on synthetic networks and then apply
them to brain fMRI data, the motivating example discussed in the
Introduction.
To initialize EM,  we use regularized spectral clustering  \cite{Amini.et.al.2013,Joseph&Yu2013,Le&Levina&Vershynin2015} to estimate the community labels.
Let $\hat{A}_{\text{reg}} = \hat{A}+0.5 n^{-1} \onevector\onevector^\tran$, $D = \text{diag}(\hat{A}_{\text{reg}}\onevector)$ and $L=D^{-1/2}\hat{A}_{\text{reg}} D^{-1/2}$.
We first compute the $K$ eigenvectors of $L$ that correspond to its $K$ largest eigenvalues. We then apply the $K$-means algorithm on row vectors of the $n\times K$ matrix obtained by stacking the $K$ eigenvectors together to find the community labels.
The $K$-means algorithm is implemented via the MATLAB function \texttt{kmeans} and is run with 20 iterations.  

The performance of all estimators is measured by the false discovery rate (FDR) and the true positive rate  (TPR).
For an estimate $\hat A$ of $A$, FDR and TPR are defined as 
\begin{eqnarray*}
  \operatorname{FDR} = \frac{|\{(i,j): \hat A_{ij}=1, A_{ij}=0\}|}{|\{(i,j): \hat A_{ij}=1\}|}, \qquad
  \operatorname{TPR} = \frac{|\{(i,j): \hat A_{ij}=1, A_{ij}=1\}|}{|\{(i,j): A_{ij}=1\}|}.
\end{eqnarray*}
For each method, we also report the overlap $1-\gamma(\hat{c},c)$ between community assignments $\hat{c}$ and $c$, where $\gamma(\hat{c},c)$ is defined by \eqref{eq: community detection error} and $\hat{c}$ is computed by applying regularized spectral clustering on the estimate of $A$ produced by that method. 
Note that $\gamma(\hat{c},c)$ can be greater than one; in that case we set the overlap to zero. Finally, we report the errors in estimating the false positive, false negative and edge probabilities of EM and MV (the corresponding errors of EM[T] are very similar to that of EM and therefore omitted). For EM, we measure the errors by directly computing the ratios of Frobenius norms:
$$
\frac{\|\hat{W}-W\|_F}{\|W\|_F}, \quad
\frac{\|\hat{P}-P\|_F}{\|P\|_F}, \quad
\frac{\|\hat{Q}-Q\|_F}{\|Q\|_F}.
$$
For MV, we first estimate edge probabilities in each block specified by $\hat{c}$ by the average number of non-zero entries of $\hat{A}$ in that block and then compute the Frobenius norm errors defined above for $W$. To estimate $P$ and $Q$ from MV, for each pair of nodes $(i,j)$, if $\hat{A}_{ij}=0$ then we estimate $P_{ij}$ by $\hat{P}_{ij} = S_{ij}/N$; if $\hat{A}_{ij}=1$, we estimate $Q_{ij}$ by $\hat{Q}_{ij} = 1-S_{ij}/N$. We measure the errors of estimating $P$ and $Q$ by the Frobenius norm ratios computed separately over the zero and non-zero entries of $\hat A$:
$$
\left(\frac{\sum_{(i,j):\hat{A}_{ij}=0} (P_{ij}-\hat{P}_{ij})^2}{\sum_{(i,j):\hat{A}_{ij}=0} P_{ij}^2}\right)^{1/2}, \qquad
\left(\frac{\sum_{(i,j):\hat{A}_{ij}=1} (Q_{ij}-\hat{Q}_{ij})^2}{\sum_{(i,j):\hat{A}_{ij}=1} Q_{ij}^2}\right)^{1/2}.
$$

\subsection{Synthetic data}\label{subsec:synthetic data}
We first test the performance of the estimates on a simple
example of a sample of networks with shared community structure.
We generate the adjacency matrix $A$ from an SBM with $n=300$ nodes and  $K=3$ communities of 100 nodes each.
We parameterize the $3\times 3$ matrix $B$ of within and between communities edge probabilities of this SBM as
$$  B = \rho_w
\left(
  \begin{array}{ccc}
    1 & \b_w & \b_w \\
    \b_w & 1 & \b_w \\
    \b_w & \b_w & 1 \\
  \end{array}
\right).
$$

The parameter $\rho_w$ controls the overall expected node degree of the model while
$\beta_w$ specifies the ratio of the between-community edge probability to the within-community edge probability.
Smaller values of $\beta_w$ correspond to easier community detection.
Conversely, a larger value of $\rho_w$ indicates more observed edges and therefore an easier community detection problem.
Note, however, that the difficulty of the community detection problem does not directly translate into the difficulty of estimating the underlying true $A$, which is also influenced by $P$ and $Q$.

We  similarly parameterize the $3\times 3$ noise matrices $\mathcal{P}$ and $\mathcal{Q}$ of within- and between-communities false positive and false negative probabilities as
$$  \mathcal{P} = \rho_p
\left(
  \begin{array}{ccc}
    1 & \b_p & \b_p \\
    \b_p & 1 & \b_p \\
    \b_p & \b_p & 1 \\
  \end{array}
\right),
\ \ \
\mathcal{Q} = \rho_q
\left(
  \begin{array}{ccc}
    1 & \b_q & \b_q \\
    \b_q & 1 & \b_q \\
    \b_q & \b_q & 1 \\
  \end{array}
\right).
$$
Thus, $P_{ij} = \mathcal{P}_{c_ic_j}$ and $Q_{ij} = \mathcal{Q}_{c_ic_j}$ for $1\le i,j\le n$.
The overall numbers of false positive and false negative edges are controlled by
parameters $\rho_p$ and $\rho_q$, respectively.  The relative prevalence of false positives and false negatives between communities compared to within communities is controlled by parameters $\beta_p$ and $\beta_q$, respectively.
Thus, if $A\equiv 0$ (a network with no edges) then the average degree of a noisy realization of $A$ is $\rho_p(1+2\b_p)(n-1)$;
if $A = \mathbf{1}\mathbf{1}^\tran -\diag(\mathbf{1})$ (a fully connected network), then the average degree of a noisy realization of $A$ is $(n-1)-\rho_q(1+2\b_q)(n-1)$.

In order to focus attention on the relative performance of various methods dealing with a noisy sample of networks, we will use the true number of communities in simulations, $K = 3$.  When the number of communities is not  known,
 it can be estimated from $\hat{A}$ in the first stage by several methods
\cite{Chen&Lei2014,Wang&Bickel2015,Le&Levina2015}, which have been shown to provide accurate results when $K$ is relatively small compared to $n$.   Alternatively, one could use a larger $K$ and interpret the stochastic block model fit as a histogram approximation to the network rather than the true model, as was argued in \cite{Olhede&Wolfe2014}.

The performance of all methods --- majority vote (MV), our proposal (EM, EM[T]) and oracle parameters (OP, OP[T]) --- is shown in Figures~\ref{fig: beta w}, \ref{fig: beta p} and \ref{fig: beta q}. 
In all cases, $n = 300$, $K = 3$, community sizes are equal, $\rho_w=0.15$, $\rho_q=0.2$, $\rho_p=0.25$, the target FDR is set to $0.05$, and all results are averaged over 100 replications.   
To see the effect of structured versus unstructured noise,
we consider three different settings where we fix two of the parameters $\b_w$, $\b_p$, $\b_q$ and let the third one vary.
In Figure~\ref{fig: beta w} the out-in ratios $\b_p=\b_q=1$, meaning that $P$ and $Q$ do not have any community structure and all entries of $A$ are equally likely to be flipped to the opposite.  
When $\b_w$ is not too close to 0 or 1, community labels and parameters of the SBM are accurately estimated, EM performs similarly to the oracle and has a much smaller FDR (essentially equal to the target of 0.05)  than MV.
In contrast, when $\b_w$ is close to 0 or 1, EM does not estimate all SBM parameters accurately, but it still provides a reasonable estimate of $A$.
When likelihood ratio tests are used, both EM[T] and OP[T] output estimates with stable FDR close to the target $0.05$,
although the FDR of EM[T] is slightly larger due to the errors from parameter estimation.  
In most cases, MV has large FDR and TPR, which indicates that it estimates $A$ as having many more edges than it really does. Compared to EM and EM[T], MV also has larger errors in estimating false positive and false negative probabilities.  All methods perform fairly similarly in recovering communities, with MV being the least accurate and OP[T] the most accurate.

Figures~\ref{fig: beta p} and~\ref{fig: beta q} show the effect of false positive and false negative edges 
when one of parameters $\b_p$, $\b_q$ is set to 1 and the other varies.
Again, EM and OP perform similarly when $\b_p$, $\b_q$ are not too large and community labels can be accurately estimated.
EM also has much smaller FDR than MV in all settings. Both EM[T] and OP[T]  have stable FDRs, close to the target of 0.05, but at the expense of lower TPR as $\beta_p$ or $\b_q$ increases.

Overall, as one would expect, all methods perform better as the sample size $N$ increases, $\b_w$, $\b_p$ and $\b_q$ decrease, and the community structure becomes stronger.  
EM and OP perform very similarly and provide better FDR than MV in all settings, especially when $N$ is small.
EM[T] and OP[T] also perform well in controlling the FDR.    These empirical results show the importance of leveraging the block structure for estimating the original network $A$.

%
%

\begin{figure}[!ht]
  \centering
  \includegraphics[trim=20 30 30 10,clip,width=.8\textwidth]{}\\
  \caption{A comparison of different methods as the out-in edge probability ratio $\beta_w$ increases and false positives and false negatives occur uniformly at random ($\b_p=\b_q = 1$). First row: false discovery rate for edges of $A$.  Second row: true positive rate for edges of $A$, Third row: overlap  $1-\gamma(\hat{c},c)$ between the true labels $c$ and an estimate $\hat{c}$ obtained from the estimated $A$ for each method.  Fourth row: errors of EM and MV in estimating matrices $W$, $P$ and $Q$.
For all cases, $\rho_w = 0.15$, $\rho_q=0.2$, $\rho_p=0.25$, $n=300$, $K=3$, and the target false positive rate for both EM[T] and OPT[T] is $0.05$. All measures are averaged over 100 replications.  
}
  \label{fig: beta w}
\end{figure}

\begin{figure}[!ht]
  \centering
  \includegraphics[trim=30 30 40 10,clip,width=.8\textwidth]{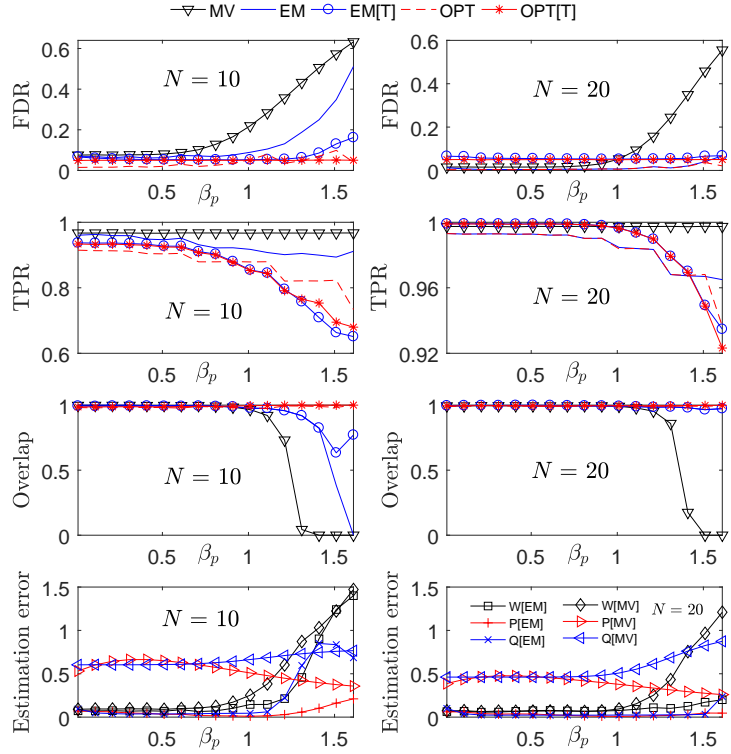}\\
  \caption{
  A comparison of different methods as the out-in false positive probability ratio $\beta_p$ increases, false negatives occur uniformly at random ($\b_q = 1$) and out-in edge probability $\beta_w=0.2$.  First row: false discovery rate for edges of $A$.  Second row: true positive rate for edges of $A$, Third row: overlap  $1-\gamma(\hat{c},c)$ between the true labels $c$ and an estimate $\hat{c}$ obtained from the estimated $A$ for each method.  Fourth row: errors of EM and MV in estimating matrices $W$, $P$ and $Q$.  For all cases, $\rho_w = 0.15$, $\rho_q=0.2$, $\rho_p=0.25$, $n=300$, $K=3$, and the target false positive rate for both EM[T] and OPT[T] is $0.05$. All measures are averaged over 100 replications.  
}
  \label{fig: beta p}
\end{figure}

\begin{figure}[!ht]
  \centering
  \includegraphics[trim=20 30 50 10,clip,width=.8\textwidth]{}\\
  \caption{A comparison of the performance of different methods as the out-in false negative probability ratio $\beta_q$ increases, false positives occur uniformly at random ($\b_p = 1$) and out-in edge probability is fixed ($\beta_w=0.2$).   First row: false discovery rate for edges of $A$.  Second row: true positive rate for edges of $A$, Third row: overlap  $1-\gamma(\hat{c},c)$ between the true labels $c$ and an estimate $\hat{c}$ obtained from the estimated $A$ for each method.  Fourth row: errors of EM and MV in estimating matrices $W$, $P$ and $Q$. }
  \label{fig: beta q}
\end{figure}




\subsection{Brain networks}\label{subsec: brain network}
In this section we evaluate the performance of our proposed EM method on functional brain networks
\cite{Taylor&Chen&et.al.2011,Taylor&Demeter&et.al.2014}.
The data are obtained from resting state fMRI images, where blood oxygenation levels at different locations in the brain are recorded over time.
These time series of oxygen levels are then preprocessed and used to compute a Pearson correlation between each pair of locations.  Finally, the correlations are thresholded  to construct a binary network matrix.  

The dataset we analyze here includes $81$ subjects, $39$ suffering from schizophrenia and $42$ healthy controls (see \cite{Taylor&Chen&et.al.2011,Taylor&Demeter&et.al.2014} for details on the data). The resulting correlation matrices are $264\times 264$, corresponding to 264 regions of interest (ROIs) in the brain.    For a given value of the threshold $\nu\in(0,1)$, we construct a brain network $A^{(m)}$ for subject $m$ from its correlation matrix $C^{(m)}$
by setting $A^{(m)}_{ij} = 1$ if $|C^{(m)}_{ij}|>\nu$ and $A^{(m)}_{ij} = 0$ otherwise.
We view each network $A^{(m)}$ as a noisy observation of an underlying true biological network $A$, which differs between schizophrenics and controls.   
Note that the number of edges in $A^{(m)}$ depends on $\nu$.
In practice, there is no consensus on how to choose $\nu$,
therefore it is desirable to have a method that is accurate and stable over a large range of  $\nu$ values.

Since the number of communities $K$ is unknown, we first estimate it from the majority vote matrix $\hat{A}$ using a spectral method for estimating the number of communities based on counting the negative eigenvalues of the graph Bethe-Hessian \cite{Le&Levina2015}.
As expected, the estimated number of communities depends on the threshold value; please see Figure~\ref{fig:brain stability}.     However, there is a stable range of $\nu$ roughly between 0.2 and 0.4, and the estimated $K$ over that range is close to 14, the number of functional regions suggested independently by \cite{Power&Cohen&et.al.2011}.     To facilitate comparison with this known functional parcellation, we fit our EM-based method with $K=14$ in the subsequent analysis.


Figure~\ref{fig:brain stability}
shows several global summary statistics of the estimates of $A$ for a range of $\nu$.   Global network summary statistics have been a popular tool in the study of brain networks \cite{Rubinov&Sporns2010} and can be used to predict disease status, but here our focus is on how the network estimation method affects the population estimates of these summary statistics.  In particular, since there is no consensus on choosing $\nu$, a stable estimate over a range of values of $\nu$ is desirable.   Overall, the plots in Figure~\ref{fig:brain stability} show  that the EM method produces much more stable estimates over a wider range of $\nu$.     For all statistics, the left column shows schizophrenics and the right column healthy controls.   The first row shows estimated population average degree for the EM and MV methods, along with the range (from minimum to maximum value) and sample median of individual's average network degrees.   Two other summary statistics shown in the second and third rows, global efficiency (the average inverse shortest path length, viewed as a measure of network functional integration) and transitivity (a normalized average fraction of triangles around an individual node, viewed as 
 a measure of network functional segregation that reflects the presence of communities), are also more stable over a wider range of $\nu$ for the EM method.    The fourth row shows the strength of the networks estimated by EM and MV, measured by modularity optimized via spectral clustering  \cite{Newman2004}. 

Finally, the fifth row presents the estimated number of communities  based on counting the negative eigenvalues of the graph Bethe-Hessian \cite{Le&Levina2015}.     For almost all summaries, the estimates obtained by EM are closer to the median values obtained from the individual networks, suggesting the EM produces a more accurate population estimate, or at least one that is more representative of the sample.   The exception to this general pattern occurs only at very low values of $\nu$, where the network is likely too dense to be informative.

\begin{figure}[!ht]
\centering
\subfiguretopcaptrue
\subfigure{
    \includegraphics[trim=50 0 50 30,clip,width=1\textwidth]{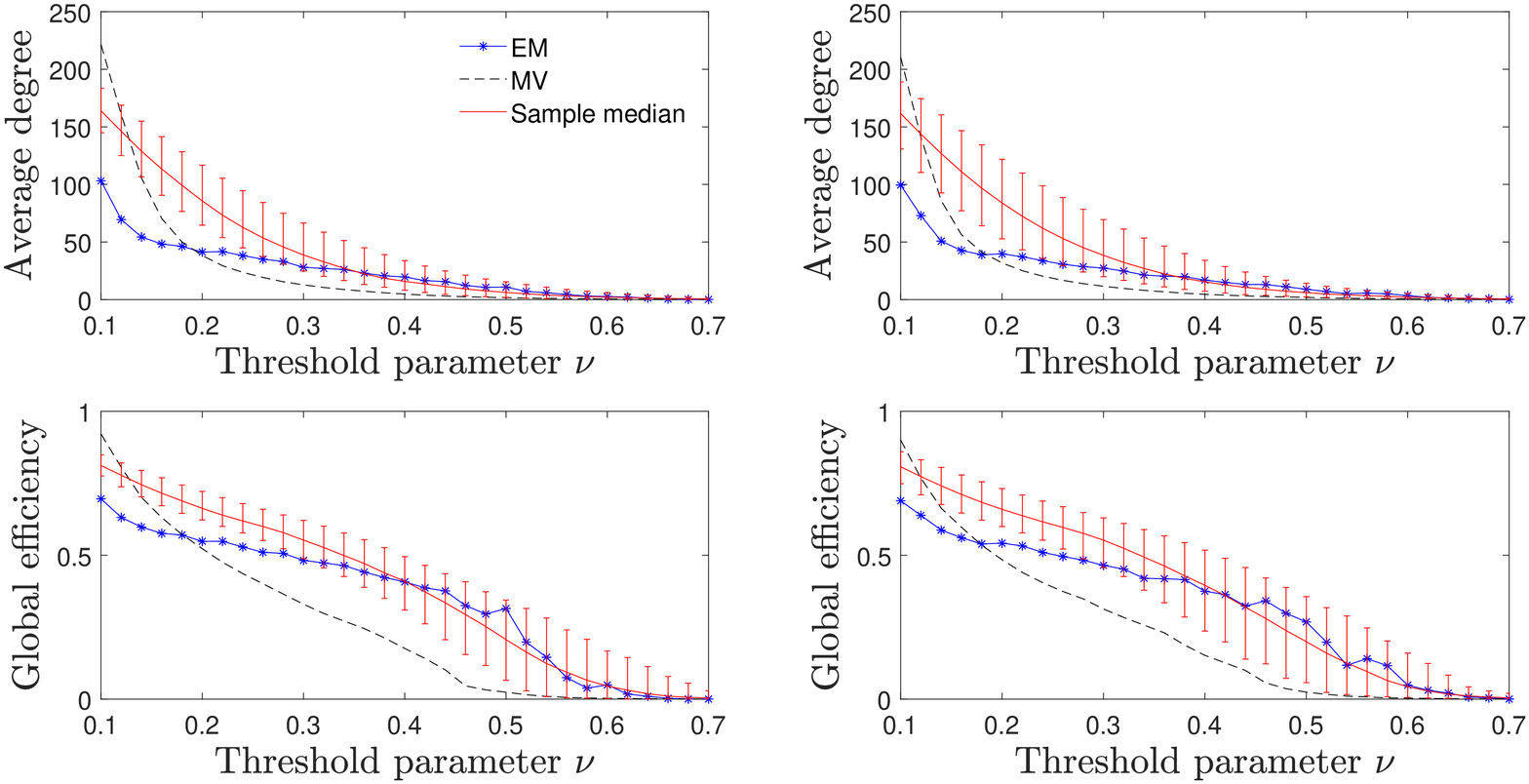}
}\\
\subfigure{
    \includegraphics[trim=50 10 50 30,clip,width=1\textwidth]{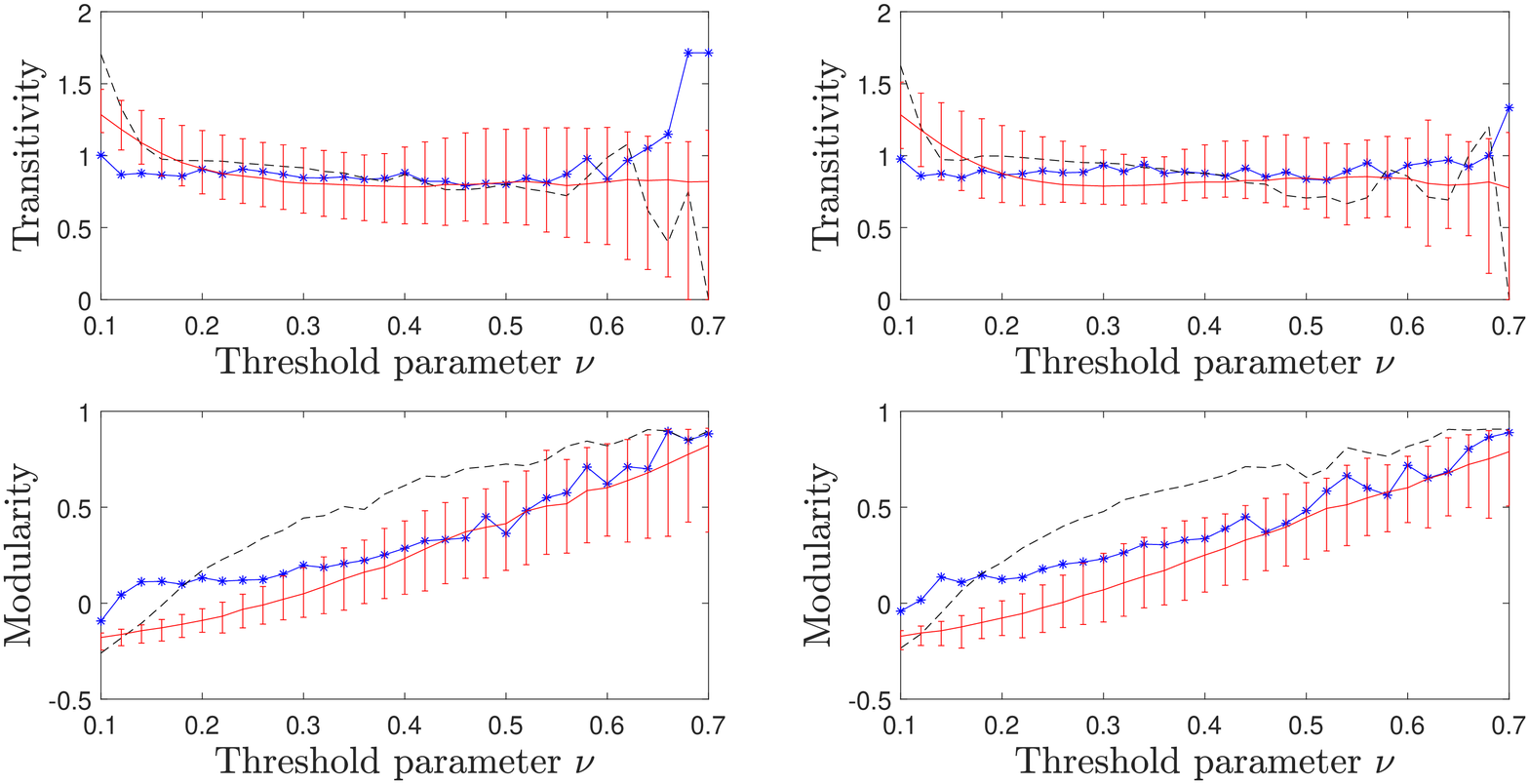}
}\\
\subfigure{
    \includegraphics[trim=0 0 0 25,clip,width=1\textwidth]{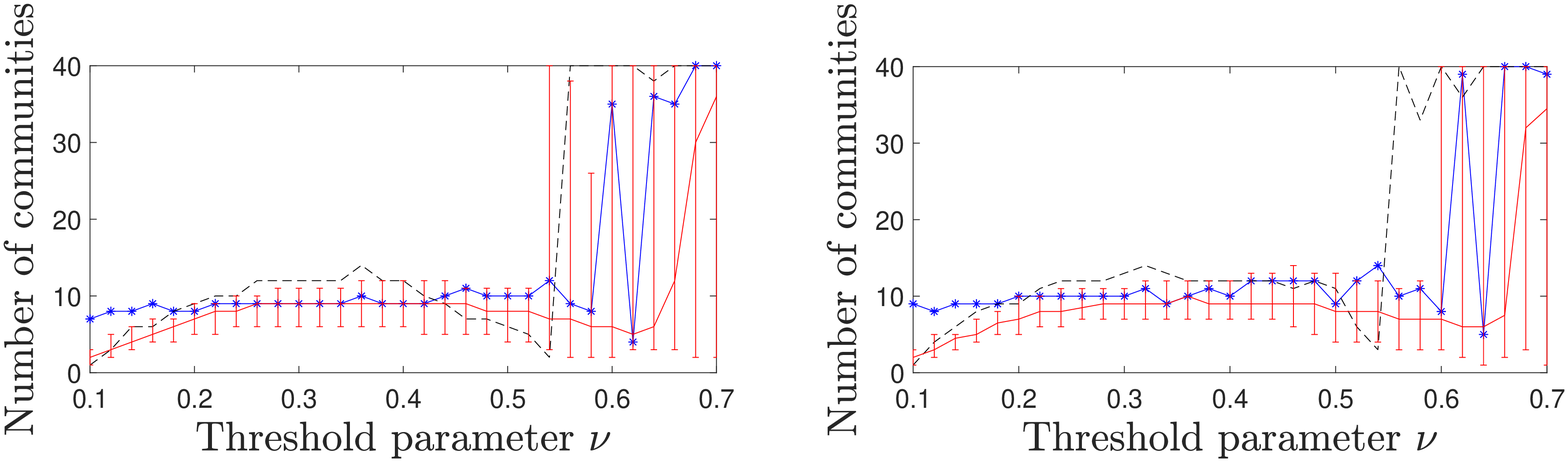}
}
\caption{Global summary statistics of brain networks estimated by EM and MV as the threshold parameter $\nu$ changes.
         Left column: schizophrenics; right column:  healthy controls.   First row:  average degree.  Second row: global efficiency.   Third row:  transitivity.   Fourth row:  modularity value of the community labels estimated by spectral clustering. Fifth row: number of communities estimated by the Bethe-Hessian based spectral method from majority vote matrix $\hat A$. The sample median refers to the median of values computed from individual networks after thresholding, and is shown together with the range.  
}
\label{fig:brain stability}
\end{figure}


Figure~\ref{fig: brain network illustration} shows sagittal views of the underlying networks estimated by EM and MV
for the threshold parameter $\nu=0.5$.   We use $\nu = 0.5$ since higher $\nu$ produces sparser networks that are easier to visualize, and the network statistics are still fairly stable in that range.   The plots are drawn by the brain network visualization tool \textit{BrainNet Viewer} \cite{Xia&Wang&He2013}.
\begin{figure}[!ht]
\centering
\subfiguretopcaptrue
\subfigure{
    \includegraphics[trim=70 40 50 20,clip,width=0.47\textwidth]{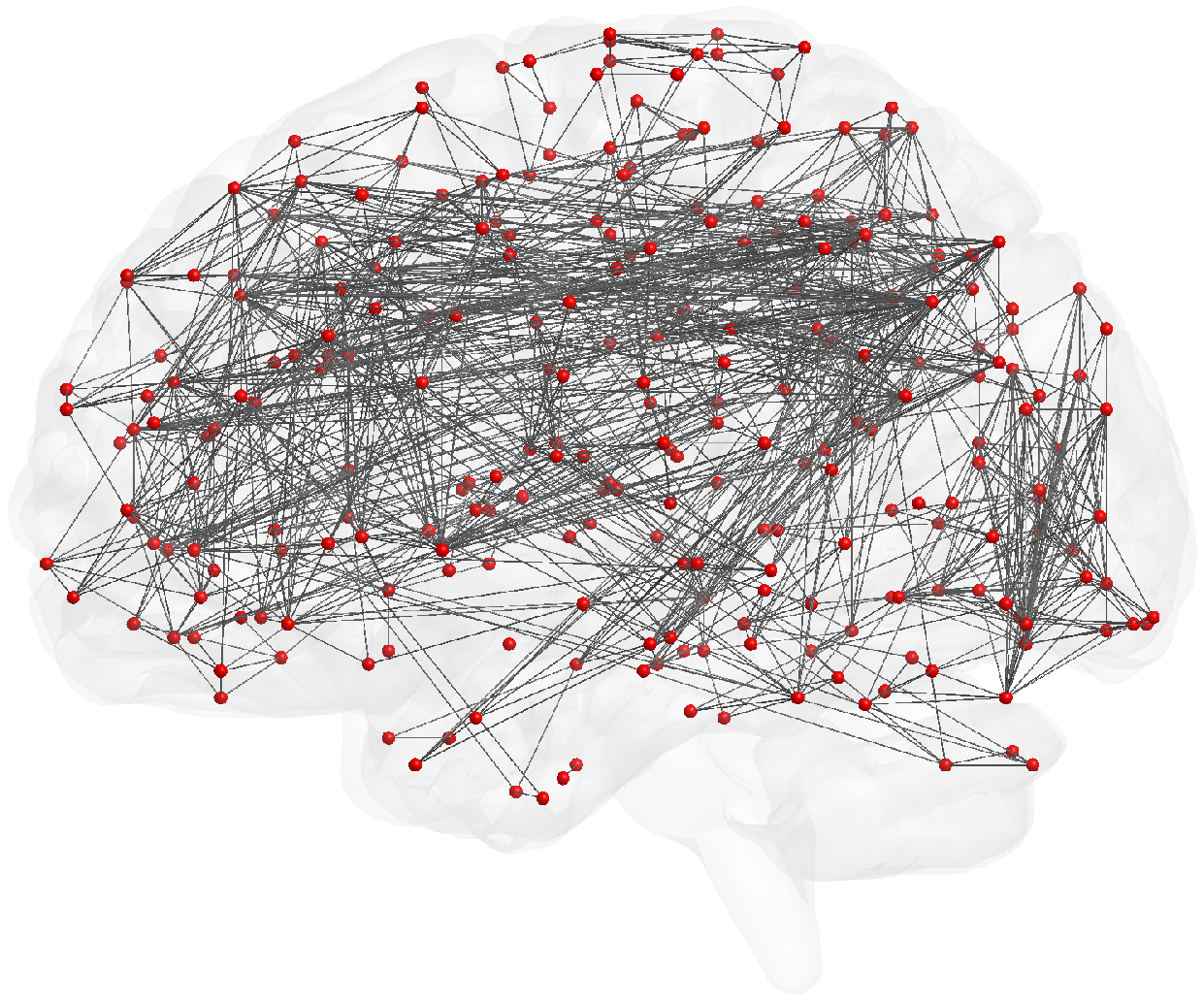}
}
\subfigure{
    \includegraphics[trim=70 40 50 20,clip,width=0.47\textwidth]{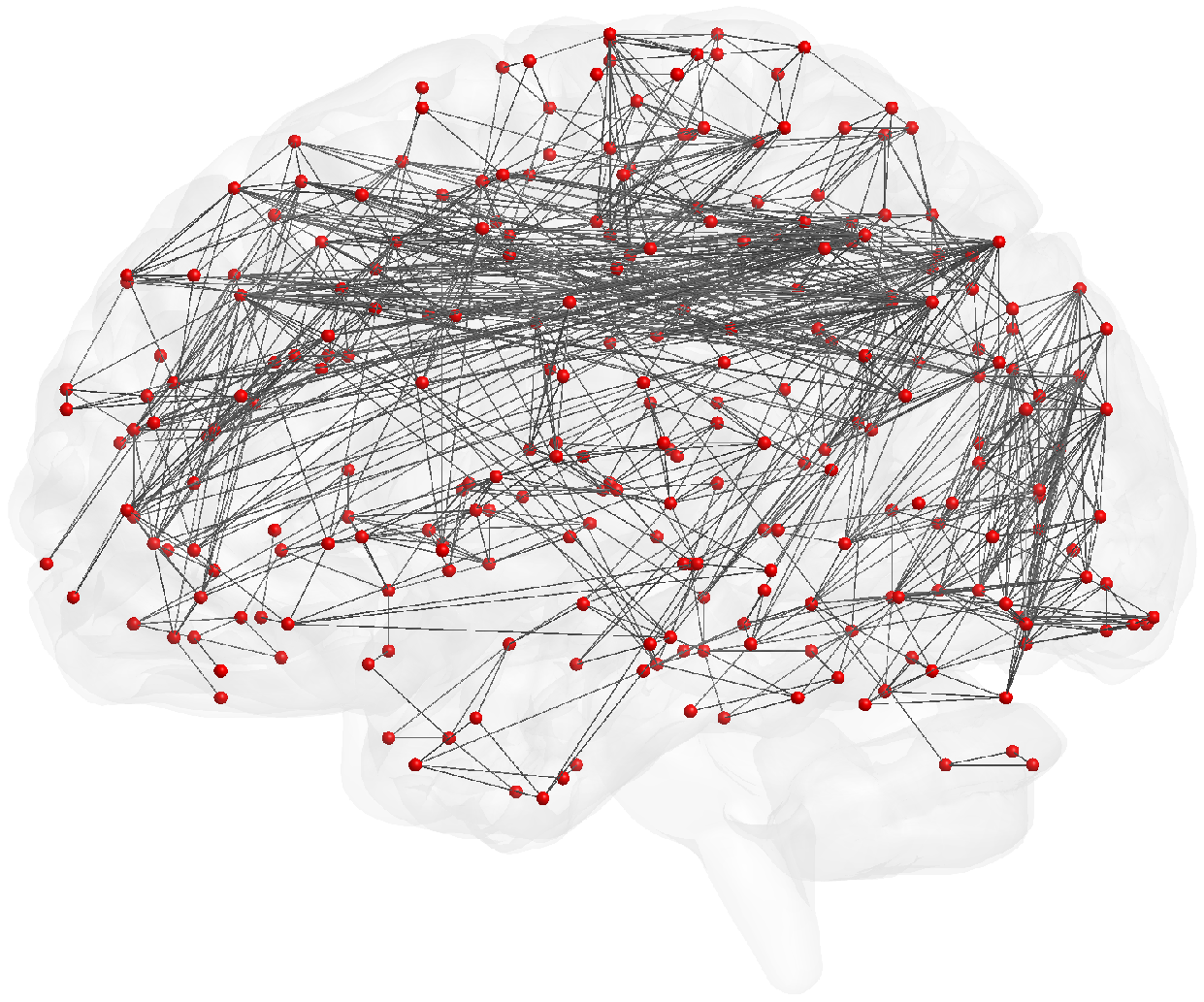}
}\\
\subfigure{
    \includegraphics[trim=70 40 50 20,clip,width=0.47\textwidth]{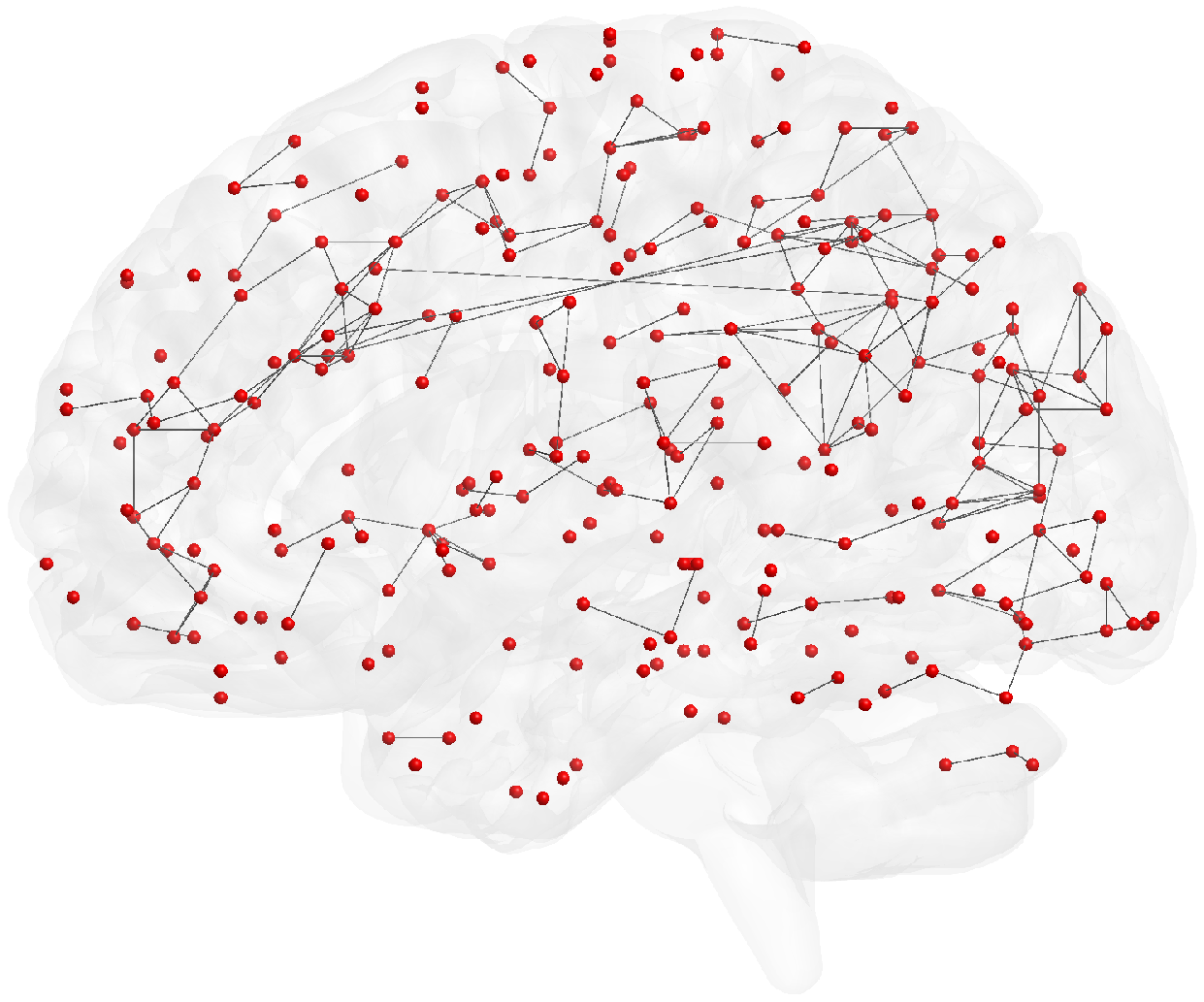}
}
\subfigure{
    \includegraphics[trim=70 40 50 20,clip,width=0.47\textwidth]{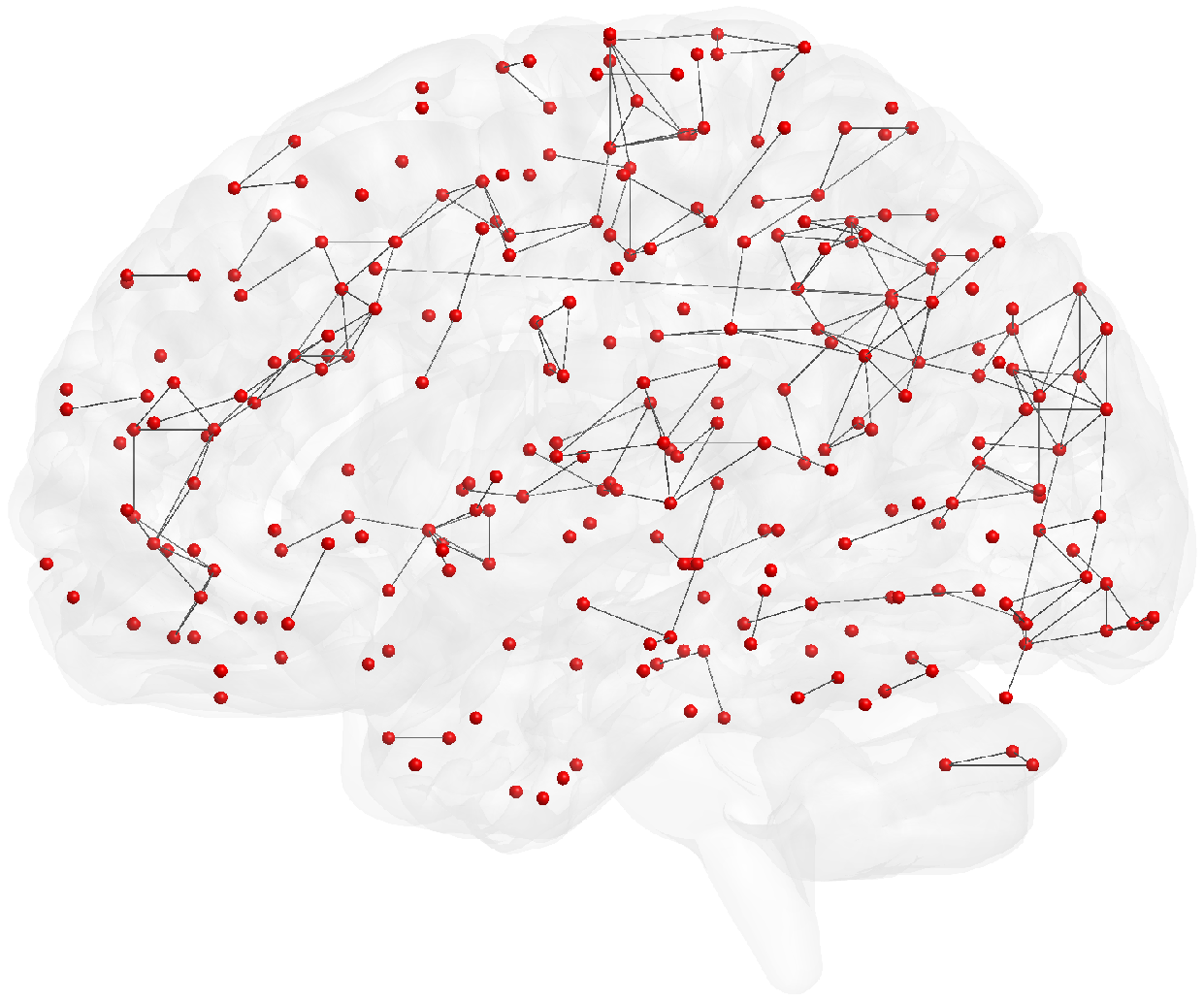}
}
\caption{Estimates of underlying population networks with threshold $\nu=0.5$.  
         Left column: schizophrenics;  right column: healthy controls.   Top row:  EM;  bottom row:  MV.}  
\label{fig: brain network illustration}
\end{figure}

\section{Discussion}\label{sec:discussion}
We have proposed a novel way to estimate an underlying ``population'' network from its multiple noisy realizations, leveraging the underlying community structure.  In contrast to most previous work (with the notable exceptions of \cite{Tang&Ketcha&Vogelstein&Priebe&Sussman2016,Wang&Zhang&Dunson2017}), our algorithm does not vectorize the network or reduce it to global summaries;  the procedure is  designed specifically for network data, and thus tends to outperform methods that do not respect the underlying network structure.   
While we focused on the stochastic block model as the underlying network structure, because of its simple form and its role as an approximation to any exchangeable network model, this assumption is not essential.   An extension to the degree-corrected stochastic block model is left as future work, and we believe in practice the algorithm will work well for any network with community structure.      On the other hand, the assumption of independent noise is important and unlikely to be relaxed.   The assumption of false positive and false negative probability matrices being piecewise constant is also important, as it allows us to significantly reduce the number of parameters and estimate them using the shared information within each block, but clearly many other ways to impose sharing information are possible, perhaps through a general low rank formulation.  We leave exploring such a formulation for future work.

\section*{Acknowledgements}

This work was partially supported by NSF grant DMS-1521551 and ONR grant N000141612910 to E. Levina.   We thank our collaborators in Stephan Taylor's lab in Psychiatry at the University of Michigan for providing a processed version of the data.   

\appendix
\section{Estimation error} \label{ap: estimation error}
We first prove Lemma \ref{lem: estimation error}, which formalizes the intuitive fact that the estimation error is an increasing function of noise levels. 
Recall that for a fixed pair of nodes $(i,j)$, $s = \sum_{m=1}^N A_{ij}^{(m)}$ and $\mu$ is the threshold defined in \eqref{eq: maximum likelihood estimator, known parameters}.

\begin{proof}[Proof of Lemma~\ref{lem: estimation error}]
Denote by $f=f(w,p,q)$ the estimation error \eqref{eq: estimation error}, that is
$$
f(w,p,q) = \P(a^*\neq a) = w\P(s<\mu|a=1) + (1-w)(1-\P(s< \mu|a=0)).
$$
We show that there exists a finite set $\MM\subseteq[0,1/2]$ such that
the partial derivative $\partial f/\partial q$ is positive for all $q\in[0,1/2]\setminus\MM$ and $f$ is continuous for all $q$.
This clearly implies that $f$ is an  increasing function of $q$; the proof that $f$ is increasing in $p$ is similar.

Let $\MM$ be the set of points $q\in[0,1/2]$ such that $\mu=\mu(w,p,q)$ is an integer; this set is finite because $\mu$ is a smooth function of $q$.
Fix $q_0\not\in\MM$ and choose an integer $k$ so that $k<\mu(w,p,q_0)<k+1$. For any $q$ sufficiently close to $q_0$, the event $s<\mu$ is the same as $s\le k$. Since $s\sim\text{Binomial}(N,1-q)$ given $a=1$ and $s\sim\text{Binomial}(n,p)$ given $a=0$, we have
\begin{eqnarray}
\label{eq: binomial cdf 1} \P(s\le k|a=1) &=& (n-k)\binom{N}{k}\int_0^q t^{n-k-1}(1-t)^k dt, \\
\label{eq: binomial cdf 0} \P(s\le k|a=0) &=& (n-k)\binom{N}{k}\int_0^{1-p} t^{n-k-1}(1-t)^k dt.
\end{eqnarray}
It follows that
$$
\frac{\partial f }{\partial q} (w,p,q_0)= w(n-k)\binom{N}{k} q_0^{n-k-1}(1-q_0)^k > 0.
$$

Let us now fix $q_0\in\MM$ and choose an integer $k$ such that $\mu(w,p,q_0) = k$.
We consider four possible cases based on the local behavior of $\mu$ near $q_0$: $\mu$ reaches local maximum at $q_0$, $\mu$ reaches local minimum at $q_0$, $\mu$ is increasing, and $\mu$ is decreasing.

If $\mu$ reaches local maximum at $q_0$ then for any $q$ sufficiently close to $q_0$, the event $s<\mu(w,p,q)$ is the same as $s\le k-1$.
Using \eqref{eq: binomial cdf 1} and \eqref{eq: binomial cdf 0} we obtain that $f$ is continuous at $q_0$.
Similarly, $f$ is continuous at $q_0$ if $\mu$ reaches local minimum at $q_0$.

If $\mu$ is increasing near $q_0$ then for any $q$ sufficiently close to $q_0$, the event $s<\mu(w,p,q)$ is the same as $s\le k-1$ if $q\le q_0$ and $s\le k$ if $q>q_0$. Therefore the jump of $f$ at $q_0$ is
$$
h = w\P(s=k|a=1) - (1-w)\P(s=k|a=0).
$$
Using $s|a=1\sim\text{Binomial}(N,1-q_0)$ and $s|a=0 \sim\text{Binomial}(N,p)$,
a simple calculation shows that $k = \mu(w,p,q_0)$ is equivalent to $h=0$, which implies the continuity of $f$ at $q_0$.
Similarly, $f$ is continuous at $q_0$ if $\mu$ is decreasing near $q_0$, and the proof is complete.
\end{proof}

\begin{proof}[Proof of Lemma~\ref{lem: monotonicity of fdr}]
Recall that $T_\a$ rejects the null hypothesis if $s> k_\a$;
when $s=k_\a$ it rejects the null with some probability $\eta_\a$ adjusted to achieve confidence level $\a$.
Since $s$ follows $\text{Binomial}(N,p)$ if $a=0$ and $\text{Binomial}(N,1-q)$ if $a=1$, the confidence level and power of $T_\a$ satisfy
\begin{eqnarray*}
  \a &=& \eta_\a \binom{N}{k_\a} p^{k_\a} (1-p)^{N-k_\a} + \sum_{m=k_\a+1}^N  \binom{N}{m} p^{m} (1-p)^{N-m},\\
  \gamma_\a &=& \eta_\a \binom{N}{k_\a} (1-q)^{k_\a} q^{N-k_\a} + \sum_{m=k_\a+1}^N  \binom{N}{m} (1-q)^{m} q^{N-m}.
\end{eqnarray*}
Solving for $\eta_\a$ from the first equation and plugging it into the second equation, we obtain
$$
\frac{\gamma_\a}{\a} = \left(\frac{1-q}{p}\right)^{k_\a}\left(\frac{q}{1-p}\right)^{N-k_\a}+
\frac{1}{\a}\sum_{m = k_\a+1}^N\binom{N}{m} (1-q)^m q^{N-m}\left[1-\left(\frac{pq}{(1-p)(1-q)}\right)^{m-k_\a}\right].
$$
Note that $k_\a$ is a piecewise constant function of $\a$.
On every interval of $\a$ where $k_\a$ is constant, the coefficient of $1/\a$ in the above representation of $\gamma_\a/\a$ is positive
because
$$
1-\left(\frac{pq}{(1-p)(1-q)}\right)^{m-k_\a} \ge 0
$$
for every $m\ge k_\a+1$ by the assumption $p,q\le 1/2$.
This implies that $\gamma_\a/\a$ is decreasing on  every such interval,
and in turn on the whole interval $(0,1]$ because $\gamma_\a/\a$ is a continuous function of $\a$.
Since the false positive rate $\xi_\a$ is a decreasing function of $\gamma_\a/\a$ by \eqref{eq: FDR},
the claim of Lemma~\ref{lem: monotonicity of fdr} follows.
\end{proof}

\section{Convergence of the EM algorithm}\label{ap: convergence}
In this section we prove Theorem~\ref{thm: guarantee of EM algorithm}, establishing the convergence of our algorithm described in Section~\ref{subsec: em-type algorithm}.
The proof consists of two steps: showing the convergence of population-level updates (Section~\ref{sec: population-level updates}) and bounding the error between population-level updates and sample-level updates (Section~\ref{sec: sample-based updates}).


\subsection{Population-level updates}\label{sec: population update}\label{sec: population-level updates}
\subsubsection{Preliminaries}
We first briefly recall the population-level updates of our algorithm and set up additional notation. To simplify notation, let us fix a pair of nodes $(i,j)$ and denote $a = A_{ij}$, $s=S_{ij}=\sum_{m=1}^N A_{ij}^{(m)}$, $w =W_{ij}$, $p = P_{ij}$, $q=Q_{ij}$ and $\theta = (w,p,q)^\tran$.
Recall that
\begin{equation*}
  w = \P(a=1), \quad p = \P(A_{ij}^{(m)}=1|a=0), \quad q = \P(A_{ij}^{(m)}=0 |a=1),
\end{equation*}
and $s$ follows a mixture of binomial distributions, namely 
\begin{equation}\label{eq: def of s}
s\sim w  \text{Binomial}(N,1-q) + (1-w)  \text{Binomial}(N,p).
\end{equation}

Let $f_{\theta}$ be the joint likelihood of $s$ and $a$. Assume that  $f_{\theta}$ belongs to a parametric family $\{f_{\theta'}| \theta': = (w',p',q')^\tran\in \Theta\}$, with $\Theta$ to be specified. For each $\theta'\in\Theta$, the joint likelihood $f_{\theta'}(s,a)$ of $s$ and $a$ has the form 
$$
f_{\theta'}(s,a) = \big[X_{\theta'}(s)\big]^{a} \big[Y_{\theta'}(s)\big]^{1-a},
$$
where 
\begin{equation}\label{eq: def of X and Y}
X_{\theta'}(s) = w'(1-q')^{s} (q')^{N-s}, \qquad
Y_{\theta'}(s) = (1-w')(p')^{s} (1-p')^{N-s}.
\end{equation}
Summing over $a$, we obtain the marginal likelihood 
$f_{\theta'}(s) = X_{\theta'}(s) + Y_{\theta'}(s)$ of $s$. 
For each $\theta''\in \Theta$, the conditional expectation of $\log f_{\theta''}(s,a)$  given $s$ takes the form
\begin{eqnarray}
 \label{eq: iteration function} T_{\theta''|\theta'}(s) &=& \frac{X_{\theta'}(s)}{X_{\theta'}(s) + Y_{\theta'}(s)}\Big[\log w''+s\log (1-q'') + (N-s)\log q''\Big]  \\
 \nonumber  &+& \frac{Y_{\theta'}(s)}{X_{\theta'}(s) + Y_{\theta'}(s)}\Big[\log(1-w'')+s\log p'' + (N-s)\log(1-p'')\Big].
\end{eqnarray}
The population-level update of a current parameter  estimate $\theta'$ is computed by
\begin{equation}\label{eq: population-level update}
  M(\theta')=\argmax_{\theta''} \E T_{\theta''|\theta'}(s).
\end{equation}

To compute $\hat{M}(\theta')$, let us denote by $F,G$ and $L$ the following functions:
\begin{equation}\label{eq: FGL}
F_{\theta'}(t) = \frac{X_{\theta'}(t)}{X_{\theta'}(t)+Y_{\theta'}(t)}, \
G_{\theta'}(t) = \frac{tX_{\theta'}(t)}{X_{\theta'}(t)+Y_{\theta'}(t)}, \
L_{\theta'}(t) = \frac{tY_{\theta'}(t)}{X_{\theta'}(t)+Y_{\theta'}(t)},
\end{equation}
where $X_{\theta'}$ and $Y_{\theta'}$ are defined in \eqref{eq: def of X and Y}.
Setting partial derivatives of $T_{\theta''|\theta'}$ to zero, we find that the components of $M(\theta')$, which we denote by $M(w')$, $M(p')$ and $M(q')$, respectively, can be computed by
\begin{equation}\label{eq: population updates}
M(w') = \E F_{\theta'}(s),
\quad \hat{M}(p') = \frac{\E L_{\theta'}(s)}{ N- N \E F_{\theta'}(s)},
\quad M(q') = \frac{N \E F_{\theta'}(s) - \E G_{\theta'}(s)}{N \E F_{\theta'}(s)}.
\end{equation}
It follows from a simple calculation and \eqref{eq: population updates} that $M(\theta) = \theta$.


\subsubsection{Guarantee of convergence}

We show that the map $\theta'\mapsto M(\theta')$ is a contraction in a neighborhood of $\theta$.
To specify such a neighborhood, we need additional notation.
For $x,y\in[0,1/2]$, define
\begin{equation}\label{eq: function H}
  H(x,y) := \frac{\log\frac{1-x}{y}}{\log\frac{1-x}{y}+\log\frac{1-y}{x}}.
\end{equation}
Note that the function $h$ defined in \eqref{eq: function h} satisfies $h(x)=H(x,1/2)$. It is easy to check that $H(x,y)$ is increasing in $x$ and decreasing in $y$.
Moreover, $H(x,y)\ge x$ and $H(x,y)\le 1-y$ for all $x,y\in[0,1/2]$.

For $p,q \in [0,1/2]$ and $\e\in[0,1)$, define a neighborhood of $(p,q)$ by
\begin{equation}\label{eq: neighborhood of p* q*}
  U_\e(p,q) = \left( h^{-1}\Big(\e h(p)+(1-\e)p\Big),\frac{1}{2}\right)\times \left(h^{-1}\Big(\e h(q)+(1-\e)q\Big),\frac{1}{2}\right),
\end{equation}
where $h^{-1}$ is the inverse function of $h$ (see Figure~\ref{fig:neighborhood}).
Since $h(x)=H(x,1/2)\ge x$ and $h$ is increasing, it follows that $h^{-1}(x)\le x$ for all $x\in[0,1/2]$.
Therefore $U_\e(p,q)$ is a rectangle containing $(p,q)$ and $U_{\e_1}(p,q)\subseteq U_{\e_2}(p,q)$ if $\e_1\ge \e_2$.
Note that for every $x\in [0,1/2]$, 
$$
h^{-1}(x) \le h^{-1}(\e h\Big(x)+(1-\e)x\Big) \le x.
$$
\begin{figure}[!ht]
  \centering
  \includegraphics[trim=50 20 50 30,clip,width=0.65\textwidth]{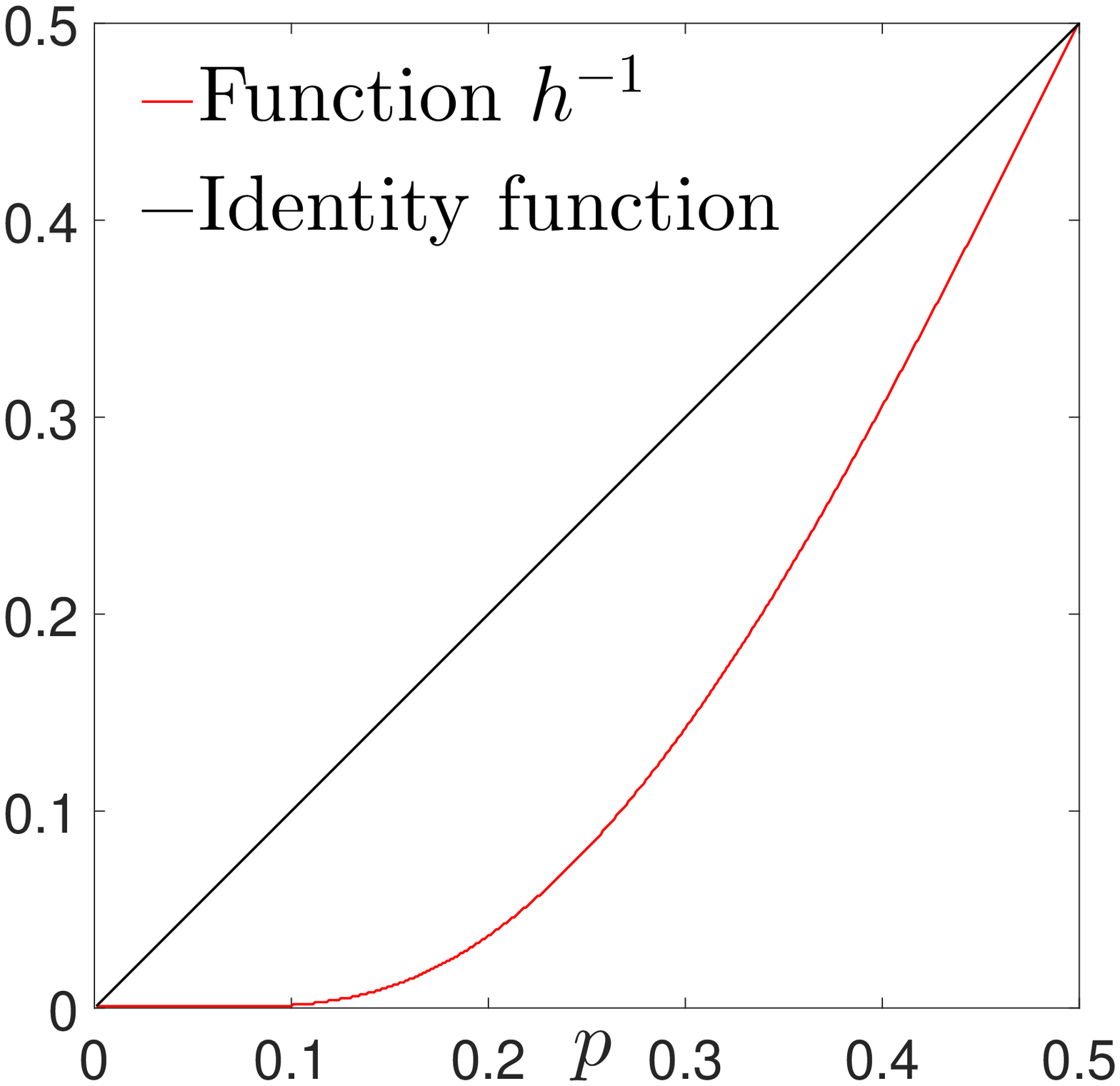}\\
  \caption{Graphs of function $h^{-1}$ and the identity function.}
  \label{fig:neighborhood}
\end{figure}

\begin{lemma}[Contraction of population-level updates]\label{lem: contraction}
Let $\delta \in (0,1/4)$, $\e\in (0,1)$ and $M(\theta')$ be the population-level update of $\theta'$ defined by \eqref{eq: population updates}.
Assume that $\theta, \theta'\in R_\delta$, with the set $R_\delta$ defined by \eqref{eq: R delta}, and $(p',q')\in U_\e(p,q)$.
Then
$$
\|M(\theta')-\theta\| \le \frac{30N\|\theta'-\theta\|}{\delta^4}   \exp\Big(- N\delta\e^2 \varphi(p,q)\Big), 
$$
where $\varphi$ is defined by \eqref{eq: function varphi}.  
\end{lemma}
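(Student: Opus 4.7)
The plan is to prove the contraction bound componentwise, comparing $M(\theta')$ to $M(\theta)=\theta$ using the explicit formulas in \eqref{eq: population updates}. First I would handle the $w$-component via the identity $M(w')-w=\E[F_{\theta'}(s)-F_\theta(s)]$, and reduce the $p$- and $q$-components to the same type of quantity by writing each quotient difference as
\[
M(p')-p=\frac{\E[L_{\theta'}(s)-L_\theta(s)]+Np\,\E[F_{\theta'}(s)-F_\theta(s)]}{N\bigl(1-\E F_{\theta'}(s)\bigr)},
\]
and analogously for $q$. Since $\theta\in R_\delta$, the denominators $\E F_{\theta'}(s)$ and $1-\E F_{\theta'}(s)$ are bounded below by a constant multiple of $\delta$ (taking $\|\theta'-\theta\|$ small enough that $\E F_{\theta'}(s)$ stays close to $w$), which contributes the $1/\delta$ factors that eventually absorb into the $1/\delta^4$.

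Next I would use the logistic representation $F_{\theta'}(s)=\sigma(u_{\theta'}(s))$ with
\[
u_{\theta'}(s)=\log\frac{w'}{1-w'}+N\log\frac{q'}{1-p'}+s\log\frac{(1-p')(1-q')}{p'q'},
\]
which is affine in $s$. The mean value theorem gives $|F_{\theta'}(s)-F_\theta(s)|\le |u_{\theta'}(s)-u_\theta(s)|\cdot\sup_{t\in[0,1]}\sigma'(u_{\theta_t}(s))$ with $\theta_t=(1-t)\theta+t\theta'$. A direct derivative calculation, using $\theta,\theta'\in R_\delta$ and the bound $|s|\le N$, yields $|u_{\theta'}(s)-u_\theta(s)|\le C(N/\delta)\,\|\theta'-\theta\|$ uniformly in $s$. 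Analogous bounds for the differences of $G$ and $L$ carry at most an extra factor of $N$ from the prefactor $s$, all of which are absorbed into the constant $30N$.

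The main work is to bound $\E\bigl[\sup_{t\in[0,1]}\sigma'(u_{\theta_t}(s))\bigr]$ by $C\exp(-N\delta\e^2\varphi(p,q))$. Using $\sigma'(v)\le e^{-|v|}$, this reduces to a large-deviation estimate for $|u_{\theta_t}(s)|$ under the mixture law \eqref{eq: def of s}. The root of $u_{\theta_t}$ is a threshold fraction of the form $H(p_t,q_t)+O(1/N)$ with $H$ from \eqref{eq: function H}, and the key role of the neighborhood $U_\e(p,q)$ in \eqref{eq: neighborhood of p* q*}, built out of $h^{-1}$, is precisely to guarantee that for every $t\in[0,1]$ this threshold fraction stays bounded away from both $p$ and $1-q$ by a margin controlled by $\e$. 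A Chernoff bound for each binomial component of $s$ then produces an exponential decay whose rate involves $KL$-divergences between the threshold fraction and $p$ (respectively $1-q$); these are compared with the algebraic quantity $\varphi(p,q)=(p-h(p))^2(q-h(q))^2$ via Pinsker-type inequalities, together with the $\delta$-bound $\theta\in R_\delta$ which prevents collapse near the boundary. Combining the three steps for each component of $\theta$ and summing yields
\[
\|M(\theta')-\theta\|\le \frac{30N\|\theta'-\theta\|}{\delta^4}\exp\bigl(-N\delta\e^2\varphi(p,q)\bigr).
\]

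The step I expect to be the main obstacle is the last one: translating the geometric definition of $U_\e(p,q)$ via $h^{-1}$ into the clean exponential rate $N\delta\e^2\varphi(p,q)$. This requires carefully tracking how much the zero of $u_{\theta_t}$ can move as $t$ varies, applying a Chernoff bound to the mixture $s$ at that shifted threshold, and then lower-bounding the resulting $KL$ exponent by the product form $\varphi(p,q)$. The other pieces (the quotient algebra, the Lipschitz bound on $u$, and the sigmoid derivative bound) are relatively mechanical once the framework of Steps 1--3 above is fixed.
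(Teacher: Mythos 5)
Your proposal follows essentially the same route as the paper's proof: a mean value theorem argument along the segment $\theta_t=\theta+t(\theta'-\theta)$ applied to the logistic representation of $F_{\theta'}$, a gradient bound of order $N/\delta^2$, a decomposition of $s$ into its two binomial mixture components with a Hoeffding/Chernoff tail estimate at the threshold $H(p_t,q_t)N$ (which the neighborhood $U_\e(p,q)$ keeps separated from $pN$ and $(1-q)N$ by a margin $\e(h(\cdot)-\cdot)$), and the same quotient algebra with $\delta$-lower-bounded denominators for the $p$- and $q$-components. The only cosmetic difference is that you phrase the exponential rate via KL divergences and Pinsker, where the paper applies Hoeffding's inequality directly to get the $(h(p)-p)^2$ and $(h(q)-q)^2$ exponents.
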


\begin{proof}
The technique used for proving this lemma closely follows \cite{Balakrishnan&Wainwright&Yu}.  
For $t\in[0,1]$, let $\theta_t = (w_t,p_t,q_t)^\tran = \theta + t(\theta'-\theta)$ and define
$g(t,s)=F_{\theta_t}(s)$, where $s$ satisfies \eqref{eq: def of s} and $F_{\theta_t}(s)$ is defined in \eqref{eq: FGL}.
Then $M(w')-w = \E (g(1,s)-g(0,s))$ because $M(w) = w$.
By the mean value theorem, for each $s$ there exists $t_s\in[0,1]$ such that
$$
g(1,s)-g(0,s) = \frac{\partial g(t_s,s)}{\partial t} .
$$

To compute the partial derivative of $g$, note that
$$
F_{\theta}(s) = \frac{1}{1+\exp\left( \log\frac{1-w}{w} + (N-s)\log\frac{1-p}{q} -s\log\frac{1-q}{p}\right)}
=\frac{1}{1+\exp(\ip{Z}{\eta})},
$$
where $\ip{.}{.}$ denotes the inner product, $Z = Z(s) :=(1,N-s,-s)^\tran$ and
$$
\eta = \eta(\theta) := \left(\log\frac{1-w}{w},\log\frac{1-p}{q},\log\frac{1-q}{p}\right)^\tran.
$$
A simple calculation shows that
\begin{eqnarray*}
  \Big|\frac{\partial g(t,s)}{\partial t}\Big|
  &=& \frac{\left|\ip{\psi_t}{\theta'-\theta}\right|}{\Big[ \exp\Big(\frac{1}{2}\ip{Z}{\eta(\theta_t)}\Big) + \exp\Big(-\frac{1}{2}\ip{Z}{\eta(\theta_t)}\Big) \Big]^2} \\
  &\le& \frac{\|\psi_t\|  \|\theta'-\theta\|}
  {\Big[ \exp\Big(\frac{1}{2}\ip{Z}{\eta(\theta_t)}\Big) + \exp\Big(-\frac{1}{2}\ip{Z}{\eta(\theta_t)}\Big) \Big]^2},
\end{eqnarray*}
where
$$
\psi_t = \left( \frac{1}{w_t(1-w_t)},\frac{Np_t-s}{p_t(1-p_t)},\frac{N(1-q_t)-s}{q_t(1-q_t)}\right)^\tran.
$$
Since $s\le N$ and $\theta',\theta\in R_\delta$, it is easy to see that $\|\psi_t\|\le 3N/\delta^2$.
Therefore
\begin{eqnarray}\label{eq: partial der bound}
\nonumber  \E \Big|\frac{\partial g(t_{s},s)}{\partial t} \Big|  &\le&
\frac{3N\|\theta'-\theta\|}{\delta^2} 
\E \frac{1} {\Big[ \exp\Big(\frac{1}{2}\ip{Z(s)}{\eta(\theta_{t_{s}})}\Big) + \exp\Big(-\frac{1}{2}\ip{Z(s)}{\eta(\theta_{t_{s}})}\Big) \Big]^2} \\
&=:& \frac{3N\|\theta'-\theta\|}{\delta^2}  \Phi.
\end{eqnarray}

Let $s_1\sim\text{Binomial}(N,p)$ and $s_2\sim\text{Binomial}(N,1-q)$
be binomial random variables.  
Since $s$ is a mixture of $s_1$ and $s_2$ with weights $1-w$ and $w$, respectively, we have
\begin{eqnarray}
\nonumber  \Phi &\le& (1-w)  \max_{t\in[0,1]}
  \E \frac{1} {\Big[ \exp\Big(\frac{1}{2}\ip{Z(s_1)}{\eta(\theta_t)}\Big) + \exp\Big(-\frac{1}{2}\ip{Z(s_1)}{\eta(\theta_t)}\Big) \Big]^2} \\
\nonumber  &+& w \max_{t\in[0,1]}
  \E \frac{1} {\Big[ \exp\Big(\frac{1}{2}\ip{Z(s_2)}{\eta(\theta_t)}\Big) + \exp\Big(-\frac{1}{2}\ip{Z(s_2)}{\eta(\theta_t)}\Big) \Big]^2}\\
\label{eq: R1 R2}  &=& (1-w) \max_{t\in[0,1]} \E \Phi_1 + w \max_{t\in[0,1]} \E \Phi_2,
\end{eqnarray}
where $\Phi_1$ and $\Phi_2$ denote the corresponding expressions under the expectation.
We now use concentration of $s_1$ and $s_2$ to bound 
$\E \Phi_1$ and $\E\Phi_2$. Note that
\begin{equation}\label{eq: R1 ref}
  \exp\Big(-\ip{Z(s_1)}{\eta(\theta_t)}\Big)
= \frac{1-w_t}{w_t} \left(\frac{1-p_t}{q_t}\right)^N \left(\frac{p_tq_t}{(1-p_t)(1-q_t)}\right)^{s_1} \ge \frac{1-w_t}{w_t}
\end{equation}
if and only if $s_1 \le H(p_t,q_t)N$.
Therefore if $s_1\simeq pN$ is sufficiently smaller than $H(p_t,q_t)N$, then $\exp(-\ip{Z(s_1)}{\eta(\theta_t)})$ grows exponentially in $N$. This implies that $\Phi_1$ is of order $\exp(-N)$ and so is $\E\Phi_1$. 

To make this argument rigorous, 
let $\alpha = \min_{t\in[0,1]} h(p_t)$. Since $(p',q')\in U_\e(p,q)$ and $h$ is monotone, it follows that
\begin{equation}\label{eq: alpha - p star}
\a - p = \min\{ h(p') - p, h(p)-p\} > \e(h(p)-p)> 0.
\end{equation}
Using the monotonicity of $H$, we have
$$
\min_{t\in[0,1]} H(p_t,q_t) \ge \min_{t\in[0,1]} H(p_t,1/2) = \alpha > p.
$$
For $0<\e_0 < \alpha - p$, denote by $\mathcal{E}$ the event $s_1\le(p+\e_0)N$.
By Lemma~\ref{lem: binom tail bound}, we have $\P(\mathcal{E}^c)\le \exp(-2\e_0^2N)$.
When $\mathcal{E}$ occurs,
$$s_1\le (p+\e_0) N < \a N \le N\min_{t\in[0,1]} H(p_t,q_t).$$
This implies that \eqref{eq: R1 ref} holds and 
$s_1\le H(p_t,q_t)N - (\a-p-\e_0)N$ for any $t\in[0,1]$. Since $\theta_t\in R_{\d}$, we have 
$$
\Phi_1 \le \exp\Big(\ip{Z(s_1)}{\eta(\theta_t)}\Big)
\le \frac{1}{\delta}
\left(\max_{t\in[0,1]}\frac{p_tq_t}{(1-p_t)(1-q_t)}\right)^{(\a-p-\e_0\big)N}.
$$
Since the function $x\mapsto x/(1-x)$ is increasing and $p_t,q_t\le 1/2-\delta$ for all $t\in[0,1]$ because $\theta',\theta\in R_\delta$, it follows that
$$\max_{t\in[0,1]} \frac{p_tq_t}{(1-p_t)(1-q_t)}\le \left(\frac{1-2\delta}{1+2\delta}\right)^2.$$
Choose $\e_0=(\alpha-p)/2$. Using the fact that $\P(\mathcal{E}^c)\le \exp(-2\e_0^2N)$ and $\Phi_1\le 1$,  we have
\begin{align*}
  \max_{t\in[0,1]} \E \Phi_1 &\le \exp(-2\e_0^2 N) + \frac{1}{\delta} \exp\Big(-N(\a-p-\e_0) \log(1+4\delta)\Big) \\
  &\le \frac{2}{\delta} \exp\Big(-N \delta(\a-p)^2 \Big) \le  \frac{2}{\delta}  \exp\Big(-N \delta \e^2 \big(h(p)-p\big)^2 \Big).
\end{align*}
Similarly,
$$
\max_{t\in[0,1]} \E \Phi_2 \ \le \ \frac{2}{\delta} \exp\Big(- N \delta\e^2 \big(h(q)-q\big)^2 \Big).
$$
Together with \eqref{eq: partial der bound} and \eqref{eq: R1 R2}, we obtain
\begin{eqnarray}
  \label{eq: Mw bound} |M(w')-w| &\le& \E \Big|\frac{\partial g(t_s,s)}{\partial t} \Big| \\
  \nonumber&\le& \frac{6N\|\theta'-\theta\|}{\delta^3}  \exp\Big(- N\delta\e^2\big(h(p)-p\big)^2\big(h(q)-q\big)^2\Big)=:\Psi.
\end{eqnarray}

It remains to bound $|M(p')-p|$ and $|M(q')-q|$, which boils down to controlling $\E L_{\theta'}(s)$ and $\E G_{\theta'}(s)$. Since $s\le N$, from \eqref{eq: Mw bound}, we have
$$
|\E G_{\theta'}(s)-\E G_{\theta}(s)|
=|\E s(F_{\theta'}(s)-F_{\theta}(s))|
\le N\Psi.
$$ 
Similarly,
$$
|\E L_{\theta'}(s)-\E L_{\theta}(s)|
=|\E G_{\theta'}(s)-\E G_{\theta}(s)|
\le N\Psi.
$$
Combining these inequalities, we obtain
$$
|M(p')-p| \le \frac{3N\Phi}{\delta}, \quad |M(q')-q| \le \frac{3N\Phi}{\delta},
$$
and the claim of Lemma~\ref{lem: contraction} follows.
\end{proof}

\begin{lemma}[Tail bound for binomial distribution]\label{lem: binom tail bound}
If $s\sim \text{Binomial}(N,p)$ then for any $\e\ge 0$,
\begin{eqnarray*}
\Pr{s \le (p-\e)N} &\le& \exp\left(-2\e^2 N\right), \\
\Pr{s \ge (p+\e)N} &\le& \exp\left(-2\e^2N\right).
\end{eqnarray*}
\end{lemma}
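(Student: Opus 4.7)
The plan is to recognize this as a direct instance of Hoeffding's inequality. Write $s = \sum_{m=1}^N X_m$, where $X_1,\dots,X_N$ are i.i.d.\ $\mathrm{Bernoulli}(p)$, so that each $X_m$ is a bounded random variable taking values in $[0,1]$ with mean $p$. Then the two inequalities in the statement are respectively $\P(s - Np \le -\varepsilon N) \le \exp(-2\varepsilon^2 N)$ and $\P(s - Np \ge \varepsilon N) \le \exp(-2\varepsilon^2 N)$, which are exactly Hoeffding's inequality for bounded i.i.d.\ variables with range $b_m - a_m = 1$, giving the denominator $\sum_m (b_m - a_m)^2 = N$ and hence the constant $2$ in the exponent.

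Concretely, I would carry out the standard Chernoff/Hoeffding argument. For the upper tail, apply Markov's inequality to $\exp(\lambda s)$ for $\lambda > 0$ and use independence to factor the moment generating function as $\E \exp(\lambda s) = (1-p+pe^\lambda)^N$. Then invoke Hoeffding's lemma, which gives $\E \exp(\lambda(X_m-p)) \le \exp(\lambda^2/8)$ for any Bernoulli variable centered at its mean, yielding $\E \exp(\lambda(s-Np)) \le \exp(N\lambda^2/8)$. Optimizing over $\lambda$ (choosing $\lambda = 4\varepsilon$) produces the bound $\exp(-2\varepsilon^2 N)$. The lower tail follows by applying the same argument to $-X_m$, or equivalently by symmetry of Hoeffding's inequality.

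There is essentially no obstacle here: the result is a textbook consequence of Hoeffding's inequality, and both tails are symmetric because the variables are bounded in a fixed interval of length one. The only minor care needed is to verify the constant $2$ in the exponent, which comes from Hoeffding's lemma $\E e^{\lambda(X-\E X)} \le e^{\lambda^2(b-a)^2/8}$ applied with $b-a=1$ and summed over $N$ independent terms; optimizing the Chernoff exponent $-\lambda\varepsilon N + N\lambda^2/8$ at $\lambda = 4\varepsilon$ then gives the stated bound.
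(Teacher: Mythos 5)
Your proposal is correct and takes exactly the same route as the paper, which simply observes that the lemma is a direct consequence of Hoeffding's inequality; you additionally spell out the standard Chernoff/Hoeffding-lemma derivation of the constant $2$, which is accurate but not needed beyond the citation.
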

\begin{proof}
This is a direct consequence of Hoeffding's inequality.
\end{proof}


\subsection{Sample-level updates}\label{sec: sample-based updates}
\subsubsection{Preliminaries}
We now turn to the sample-level updates. Let $\hat{c}$ be an estimate of the label assignment $c$.
Recall from \eqref{eq: community detection error} that the discrepancy between $\hat{c}$ and $c$ is measured by
$$
\gamma(c,\hat{c}) = \min_{\tilde{c}}\max_{1\le k \le K} \frac{|\{i:\hat{c}_i=k,\tilde{c}_i\neq k\}|+|\{i:\tilde{c}_i=k,\hat{c}_i\neq k\}|}{|\{i: \tilde{c}_i = k\}|},
$$
where the minimum is over all $\tilde{c}$ obtained from $c$ by permuting the labels.
Without loss of generality, assume that the minimum is achieved at $\tilde{c}=c$.
We will focus on a single block (out of $K^2$ blocks)  determined by $c$ and $\hat{c}$.
Fix $k,l\in\{1,...,K\}$ and denote
\begin{equation}\label{eq: J}
  J:=\Big\{(i,j):c_i=k \ \text{and} \ c_j = l\Big\}, \quad \hat{J}:=\Big\{(i,j):\hat{c}_i=k \ \text{and} \ \hat{c}_j = l\Big\}.
\end{equation}
By definition of $\gamma(c,\hat{c})$, we have
$$
|J\setminus \hat{J}| \le \gamma^2(c,\hat{c})   |J|, \quad |\hat{J}\setminus J| \le \gamma^2(c,\hat{c})   |J|.
$$

To compare sample-level and population-level updates, for any pair of nodes $(i,j)\in J$, denote (as in Section~\ref{sec: population update})   
$$
w =W_{ij}, \quad p = P_{ij}, \quad q=Q_{ij},\quad \theta = (w,p,q)^\tran.
$$
Also, let $s$ be a mixture of Binomial distributions defined by \eqref{eq: def of s}. Recall the population-level update $M(\theta')$ and its components $M(w')$, $M(p')$ and $M(q')$ in  \eqref{eq: population-level update} and \eqref{eq: population updates}. In the finite sample, instead of taking the expectation of $T_{\theta''|\theta'}$, we compute the average of $T_{\theta''|\theta'}$ over all entries within the block $\hat{J}$. The sample-level update is then the maximizer of this average:
$$
\hat{M}(\theta') = \argmax_{\theta''} \frac{1}{|\hat{J}|} \sum_{(i,j)\in \hat{J}} T_{\theta''|\theta'}(S_{ij}),
$$

To compute $\hat{M}(\theta')$, denote  
$$
\hat{\E} F_{\theta'} = \frac{1}{|\hat{J}|} \sum_{(i,j)\in \hat{J}} F_{\theta'}(S_{ij}), \qquad
\hat{\E} G_{\theta'} = \frac{1}{|\hat{J}|} \sum_{(i,j)\in \hat{J}} G_{\theta'}(S_{ij}), \qquad 
\hat{\E} L_{\theta'} = \frac{1}{|\hat{J}|} \sum_{(i,j)\in \hat{J}} L_{\theta'}(S_{ij}),
$$
where $F_{\theta'}$, $G_{\theta'}$ and $L_{\theta'}$ are defined in \eqref{eq: FGL}.
Then similar to \eqref{eq: population updates}, the components of $\hat{M}(\theta')$, which we denote by $\hat{M}(w')$, $\hat{M}(p')$ and $\hat{M}(q')$, can be computed by
\begin{equation}\label{eq: sample updates}
\hat{M}(w') = \hat{\E} F_{\theta'},
\quad \hat{M}(p') = \frac{\hat{\E} L_{\theta'}}{ N- N  \hat{\E} F_{\theta'}},
\quad \hat{M}(q') = \frac{N  \hat{\E}F_{\theta'} - \hat{\E} G_{\theta'}}{N  \hat{\E} F_{\theta'}}.
\end{equation}

\subsubsection{Concentrations of sample-level updates}
We first prove uniform bounds for $\hat{\E} F_{\theta'}-\E F_{\theta'}(s)$, $\hat{\E} G_{\theta'}-\E G_{\theta'}(s)$ and $\hat{\E} L_{\theta'}-\E L_{\theta'}(s)$ in Lemmas~\ref{lem: sample updates}~and~\ref{lem: sample updates large N}. Combined with Lemma~\ref{lem: contraction}, this yields a uniform bound for $\hat{M}(\theta')-M(\theta')$ in Corollary~\ref{cor: sample updates}. Finally, Lemma~\ref{lem: validity of initial estimates} shows that our initial parameter estimates belong to a desired neighborhood of the true parameter $\theta$. The convergence of the sample-level updates then follows from the contraction of the population updates (Lemma~\ref{lem: contraction}), the uniform bound between the sample-level updates and the population updates (Corollary~\ref{cor: sample updates}), and the accuracy of the initial parameter estimates (Lemma~\ref{lem: validity of initial estimates}).

\begin{lemma}[Concentration of sample updates with small $N$]\label{lem: sample updates}
Let $\delta\in(0,1/4)$ and $s$ be a mixture of Binomial distributions defined by \eqref{eq: def of s}. 
Then for any $r\ge 0$ the following hold with probability at least $1-e^{-r}$:  
\begin{eqnarray*}
\sup_{\theta'\in R_\d} |\hat{\E} F_{\theta'}-\E F_{\theta'}(s)| &\le& 4\gamma^2(\hat{c},c) + 50r \log^2\left(\frac{1}{\delta}-1\right)  \sqrt{\frac{N}{|J|}},\\
\sup_{\theta'\in R_\d} | \hat{\E} G_{\theta'} - \E  G_{\theta'}(s)|
 &\le& 4N\gamma^2(\hat{c},c) + 50rN
       \log^2\left(\frac{1}{\delta}-1\right) \sqrt{\frac{N}{|J|}}, 
\end{eqnarray*}
where the supremum is taken over $R_\delta$ defined in \eqref{eq: R delta}.  
\end{lemma}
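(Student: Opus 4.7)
The plan is to split $\hat{\E}F_{\theta'} - \E F_{\theta'}(s)$ into a \emph{mismatch} piece coming from the symmetric difference $\hat J \triangle J$, and a \emph{concentration} piece coming from an i.i.d.\ empirical average on $\hat J \cap J$; the concentration piece is then made uniform in $\theta'$ via a covering argument in a carefully chosen parameterization.

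First, for the mismatch piece, I would use $|F_{\theta'}|\le 1$ and $|G_{\theta'}|\le N$ together with the bound $|\hat J\triangle J|\le 2\gamma^2(\hat c,c)|J|$ implied by the community-detection error, as well as the resulting fact that $|\hat J|$ and $|J|$ differ only by an $O(\gamma^2(\hat c,c))|J|$ multiplicative factor, so that swapping the denominator $|\hat J|$ for $|J|$ is harmless. This alone delivers the $4\gamma^2(\hat c,c)$ term in the $F$-bound and the $4N\gamma^2(\hat c,c)$ term in the $G$-bound.

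Second, on $\hat J \cap J$ the variables $S_{ij}$ are i.i.d.\ draws from \eqref{eq: def of s}, so for each fixed $\theta'\in R_\delta$ a standard Hoeffding bound gives deviations of order $\sqrt{r/|J|}$ for the $F$ statistic and $N\sqrt{r/|J|}$ for the $G$ statistic with probability at least $1-e^{-r}$. To upgrade to a uniform bound over $R_\delta$, I would reparameterize by the logit vector $\eta(\theta')=(\log\tfrac{1-w'}{w'}, \log\tfrac{1-p'}{q'}, \log\tfrac{1-q'}{p'})^\tran$ already introduced in the proof of Lemma~\ref{lem: contraction}. On $R_\delta$ this vector lies in a cube of side $O(\log(1/\delta-1))$, and $F_{\theta'}(t) = (1+\exp(\ip{Z(t)}{\eta(\theta')}))^{-1}$ is $O(N)$-Lipschitz in $\eta$ (since $|Z(t)|_\infty\le N$ and the logistic derivative is bounded by $1/4$). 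Building an $\eta_0$-net of cardinality $(C\log(1/\delta-1)/\eta_0)^3$, taking a union bound of Hoeffding over the net, and balancing the Lipschitz discretization error $N\eta_0$ against the concentration rate yields the claimed $\log^2(1/\delta-1)\sqrt{N/|J|}$ scaling. The same argument handles $G_{\theta'}(t)=tF_{\theta'}(t)$, with an additional factor of $N$ coming from its range and its $O(N^2)$-Lipschitz constant in $\eta$.

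The main obstacle I anticipate is the choice of parameterization in the uniform step. In the raw $\theta'$ coordinates the Lipschitz constant of $F_{\theta'}$ scales polynomially in $1/\delta$, which would destroy the logarithmic-in-$1/\delta$ rate the lemma claims. Passing to the logit coordinates replaces those $1/\delta$ factors by $\log(1/\delta-1)$ arising from the diameter of the $\eta$-cube; getting the exponent of $\log(1/\delta-1)$ to match the stated bound is then just a matter of carefully tracking the entropy and Lipschitz terms in the net-vs-concentration tradeoff. Everything else, namely the $\hat J\triangle J$ accounting, the pointwise Hoeffding step, and the final balancing, should be routine once the right parameterization is in place.
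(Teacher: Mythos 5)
Your decomposition into a mismatch term over $\hat{J}\triangle J$ and a concentration term is exactly the paper's first step (their $\Psi$ is the average of $F_{\theta'}(S_{ij})$ over the true block $J$, and the $4\gamma^2(\hat{c},c)$ term comes out of the same cardinality accounting you describe). Where you diverge is in how the concentration term is made uniform over $R_\delta$: you propose a single-scale net in the logit coordinates $\eta(\theta')$ plus a union bound of Hoeffding and a Lipschitz discretization step, whereas the paper symmetrizes the empirical process and applies Talagrand's contraction principle to the $1$-Lipschitz map $t\mapsto 1/(1+e^t)-1/2$, reducing the supremum over the $\eta$-ball of radius $2\log(1/\delta-1)$ to the norm of $\sum_{(i,j)\in J}\e_{ij}Z_{ij}$, whose moment generating function is controlled by sub-Gaussianity over the $2^3$ sign vectors $\kappa$. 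Both routes exploit the same key observation --- that $F_{\theta'}$ is a logistic function of $\ip{Z}{\eta(\theta')}$ and that $\|\eta(\theta')\|$ is only logarithmic in $1/\delta$ on $R_\delta$ --- so your choice of parameterization is the right one, and your treatment of $G_{\theta'}$ by paying an extra factor of $N$ matches the paper's. Two caveats. First, the concentration step must be carried out over the deterministic block $J$, not over $\hat{J}\cap J$ as you write: $\hat{c}$, and hence $\hat{J}$, is a function of the very $S_{ij}$'s being averaged, so the average over $\hat{J}\cap J$ is not a sum of a fixed number of i.i.d.\ terms; the fix is simply to compare $\hat{\E} F_{\theta'}$ to the average over $J$ (absorbing the difference into the $4\gamma^2$ term, as your mismatch accounting already does) and then concentrate the latter. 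Second, your single-scale net of cardinality roughly $\big(\log(1/\delta-1)\sqrt{N|J|}\big)^3$ contributes an extra $\sqrt{\log(N|J|)}$ factor to the Hoeffding deviation that is absent from the stated bound; this is absorbable by the slack in the stated constants (linear $r$ versus $\sqrt{r}$, $\log^2$ versus $\log$) in every regime the theorem is applied in, but to recover the inequality literally for all $N,|J|,\delta$ you would need chaining or the contraction argument, which is precisely what the paper's symmetrization route buys.
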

Note that $|\hat{\E} L_{\theta'} - \E  L_{\theta'}(s)| = | \hat{\E} G_{\theta'} - \E  G_{\theta'}(s)|$, therefore the second inequality of Lemma~\ref{lem: sample updates} also provides a bound for $\sup_{\theta'\in R_\d} | \hat{\E} L_{\theta'} - \E  L_{\theta'}(s)|$.
\begin{proof}
We begin with bounding $\hat{\E} F_{\theta'}-\E F_{\theta'}(s)$. Denote
\begin{equation}\label{eq: def of Psi}
\Psi :=  \frac{1}{|J|} \sum_{(i,j)\in J} F_{\theta'}(S_{ij}).
\end{equation}
Then $\Psi$ is the average of $F_{\theta'}(S_{ij})$ over $J$ while $\hat{M}(w')$ is the average of $F_{\theta'}(S_{ij})$ over $\hat{J}$.
Using the fact that $|F_{\theta'}(S_{ij})|\le 1$ and the definition of $\gamma^2(\hat{c},c)$, we have
\begin{equation}\label{eq: error between hat M and Psi}
  |\hat{\E} F_{\theta'}-\Psi| \le \left|\frac{1}{|\hat{J}|} - \frac{1}{|J|}\right|  |\hat{J}\cap J| + \frac{|\hat{J}\setminus J|}{|\hat{J}|} + \frac{|J\setminus\hat{J}|}{|J|} \le 4\gamma^2(\hat{c},c).
\end{equation}
We now focus on bounding $\Psi-\E F_{\theta'}(s)$.
Condition on $J$, $S_{ij}$ are i.i.d. copies of a mixture of binomial distributions $w \text{Binomial}(N,1-q)+(1-w) \text{Binomial}(N,p)$. Let $\l>0$ be a positive scalar and $\e_{ij}$ be independent symmetric Bernoulli random variables, also independent of $S_{ij}$.
By symmetrization (see e.g. \cite[Theorem 2.1]{Koltchinskii2011}), we have
\begin{equation}\label{eq: symmetrization}
  \E \exp\left( \l \sup_{{\theta'}\in R_\d}| \Psi - \E F_{\theta'}(s)|\right) \le \E \exp \left( \frac{2\l}{|J|}
  \sup_{{\theta'}\in R_\d} \left|\sum_{(i,j)\in J} \e_{ij} \left( F_{\theta'}(S_{ij}) -\frac{1}{2} \right)\right|\right).
\end{equation}
Note that
$ F_{\theta'}(S_{ij}) = 1/\big(1+\exp(Z_{ij}^\tran \eta(\theta'))\big)$,
where
\begin{equation}\label{eq: Z eta}
Z_{ij} := (1,N-S_{ij},-S_{ij})^\tran, \quad \eta(\theta') := \left(\log\frac{1-w'}{w'},\log\frac{1-p'}{q'},\log\frac{1-q'}{p'}\right)^\tran.
\end{equation}
Since $t\mapsto 1/(1+e^t)-1/2$ is Lipschitz with constant one
and $\|\eta(\theta')\|\le 2 \log(1/\delta-1)$ because $\theta'\in R_\d$,
using \eqref{eq: symmetrization} and Talagrand's comparision theorem (see e.g. \cite[Theorem~4.12]{Ledoux&Talagrand1991}), we obtain
\begin{eqnarray}
\nonumber\E \exp\left( \l \sup_{\theta'\in R_\d}| \Psi - \E F_{\theta'}(s)|\right) &\le&
  \E \exp\left( \frac{4\l}{|J|}
  \sup_{\theta'\in R_\d}\Big|\sum_{(i,j)\in J} \e_{ij} Z_{ij}^\tran \eta(\theta')\Big|\right)\\
 \nonumber&\le& \E \exp\left( \frac{4\l\log\big(\frac{1}{\delta}-1\big)}{|J|} \Big\|
 \sum_{(i,j)\in J} \e_{ij} Z_{ij} \Big\|\right)\\
 \nonumber &\le& \sum_{\kappa\in\{-1,1\}^3} \E \exp\left( \frac{4\l\log\big(\frac{1}{\delta}-1\big)}{|J|}
 \sum_{(i,j)\in J} \e_{ij} Z_{ij}^\tran \kappa \right)\\
 \nonumber &=& \sum_{\kappa\in\{-1,1\}^3} \prod_{(i,j)\in J}
 \E \exp\left( \frac{4\l\log\big(\frac{1}{\delta}-1\big)}{|J|}  \e_{ij} Z_{ij}^\tran \kappa \right).
\end{eqnarray}
Since $\e_{ij} Z_{ij}^\tran \kappa$ are sub-Gaussian random variables with sub-Gaussian norm at most $\sqrt{3N}$,
we have $\E \exp(t\e_{ij} Z_{ij}^\tran \kappa)\le \exp(3t^2N)$ for every $t\ge 0$.
Therefore
\begin{eqnarray*}
\E \exp\left( \l  \sup_{\theta'\in R_\d}| \Psi - \E F_{\theta'}(s)|\right)
&\le& 8 \exp\left(  \frac{48N\l^2\log^2\big(\frac{1}{\delta}-1\big)}{|J|} \right).
\end{eqnarray*}
Using Markov inequality, we conclude that with probability at least $1-e^{-r}$ the following holds:
\begin{eqnarray*}\label{eq: Psi bound}
\sup_{\theta'\in R_\d} |\Psi-\E F_{\theta'}(s)| \le 50r \log^2\left(\frac{1}{\delta}-1\right)  \sqrt{\frac{N}{|J|}}.
\end{eqnarray*}
Therefore using \eqref{eq: error between hat M and Psi} and a triangle inequality, we obtain that with probability at least $1-e^{-r}$:
\begin{equation*}
  \sup_{\theta'\in R_\d} |\hat{\E} F_{\theta'}-\E F_{\theta'}(s)| \ \le \ 4\gamma^2(\hat{c},c) + 50r \log^2\left(\frac{1}{\delta}-1\right)  \sqrt{\frac{N}{|J|}}
\end{equation*}

It remains to bound $\hat{\E} G_{\theta'}$, which is similar to $\hat{\E} F_{\theta'}$ except that $G_{\theta'}$ contains an additional factor $s$. Proceeding the proof in the same way as for $\hat{\E} F_{\theta'}$, and bound $s$ by $N$ when necessary, we obtain that with probability at least $1-e^{-r}$, the following holds
\begin{equation*}
  \sup_{\theta'\in R_\d}
  \Big| \hat{\E} G_{\theta'}
   - \E  G_{\theta'}(s)\Big|
\ \le \ 4N\gamma^2(\hat{c},c) + 50rN \log^2\left(\frac{1}{\delta}-1\right)  \sqrt{\frac{N}{|J|}}.
\end{equation*}
The proof is complete.
\end{proof}

The following lemma provides alternative bounds to the bounds in Lemma~\ref{lem: sample updates} when $N$ is large. Note that the upper bounds of     
Lemma~\ref{lem: sample updates} contain the factor $\sqrt{N/|J|}$; they become uninformative when $N$ is larger than $|J|$. This is an artifact of our proof as we use Talagrand's comparison theorem. Lemma~\ref{lem: sample updates large N} shows that when $N$ is large, we can directly compare $\hat{M}(\theta')$ and the true parameter $\theta$ and effectively remove the factor $\sqrt{N}$.   
  
\begin{lemma}[Sample updates with large $N$]\label{lem: sample updates large N}
Let $\delta \in (0,1/4)$ and $\e\in (0,1)$. Assume that $\theta', \theta\in R_\delta$ and $(p',q')\in U_\e(p,q)$.
Then there exists a constant $C>0$ such that for any $r>0$ the following holds with probability at least $1-\exp(-r)$:
\begin{eqnarray*}
\sup_{\theta'\in R_\d} |\hat{\E} F_{\theta'}-w| &\le& \frac{C}{\d} \big(1+\d\big)^{-N\e^2\varphi(p,q)}+\frac{Cr}{\sqrt{\d|J|}} + 4\gamma^2(\hat{c},c),\\
\sup_{\theta'\in R_\d} |\hat{\E} G_{\theta'}-Npw| &\le& \frac{CN}{\d} \big(1+\d\big)^{-N\e^2\varphi(p,q)}+\frac{CNr}{\sqrt{\d|J|}} + 4N\gamma^2(\hat{c},c).
\end{eqnarray*}
\end{lemma}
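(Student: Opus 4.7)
The plan is to mimic the proof of Lemma~\ref{lem: sample updates} but replace the Talagrand-based uniform bound (which loses an avoidable factor of $\sqrt{N}$) by a direct bias–variance decomposition around the \emph{true} mean $w$. Writing
\[
\hat{\E} F_{\theta'} - w \;=\; \bigl(\hat{\E} F_{\theta'} - \Psi\bigr) \;+\; \bigl(\Psi - \E F_{\theta'}(s)\bigr) \;+\; \bigl(\E F_{\theta'}(s) - w\bigr),
\]
with $\Psi := |J|^{-1}\sum_{(i,j)\in J} F_{\theta'}(S_{ij})$, the first bracket is handled exactly as in \eqref{eq: error between hat M and Psi}, yielding $|\hat{\E} F_{\theta'} - \Psi| \le 4\gamma^2(\hat c, c)$ uniformly in $\theta'$.

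For the bias bracket $\E F_{\theta'}(s) - w$, I would recycle the calculation already performed inside the proof of Lemma~\ref{lem: contraction}. Using the mixture representation \eqref{eq: def of s} with $s_1\sim\text{Binomial}(N,p)$ and $s_2\sim\text{Binomial}(N,1-q)$, the identity
\[
\E F_{\theta'}(s) - w \;=\; (1-w)\,\E F_{\theta'}(s_1) \;-\; w\,\E\bigl(1 - F_{\theta'}(s_2)\bigr)
\]
reduces the task to bounding precisely the quantities $\E\Phi_1$ and $\E\Phi_2$ estimated in that proof: concentrate $s_1$ below $H(p',q')N$ and $s_2$ above it via Hoeffding (Lemma~\ref{lem: binom tail bound}), then use the hypothesis $(p',q')\in U_\e(p,q)$ to pick up an extra $\e^2$ slack in the exponent. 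Repackaging with the estimate $\log\bigl((1+4\delta)/(1-2\delta)\bigr)\gtrsim \log(1+\delta)$ already appearing in Lemma~\ref{lem: contraction} gives the advertised $\delta^{-1}(1+\delta)^{-N\e^2\varphi(p,q)}$ bound.

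The centered variance bracket $\Psi - \E F_{\theta'}(s)$ is an average of $|J|$ i.i.d.\ random variables bounded in $[0,1]$. For fixed $\theta'$, Hoeffding's inequality gives deviation $\lesssim \sqrt{r/|J|}$ with probability $1-e^{-r}$. To promote this to the supremum over $\theta'\in R_\delta$, cover $R_\delta$ by a finite $\e$-net, union-bound, and control the interpolation error using that $\theta'\mapsto F_{\theta'}(t)$ is Lipschitz on $R_\delta$ with constant $O(1/\delta)$ (since $\eta(\theta')$ in \eqref{eq: Z eta} is Lipschitz on $R_\delta$ and $x\mapsto(1+e^x)^{-1}$ is $\tfrac14$-Lipschitz). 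The log-cardinality of the net and the transition from $\sqrt{r}$ to $r$ are absorbed into the constant $C$, yielding the stated $Cr/\sqrt{\delta|J|}$ form.

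The inequality for $\hat{\E} G_{\theta'}$ follows the same three-step template applied to $G_{\theta'}(s) = s\,F_{\theta'}(s)$: because $s\le N$, each of the three brackets inherits an extra multiplicative factor of $N$, and the appropriate centering is read off from $\E G_\theta(s)$ under the true parameter. I expect the bias step to be the main obstacle: to land exactly on the product exponent $\varphi(p,q)=(p-h(p))^2(q-h(q))^2$ — rather than the weaker-looking maximum of the two factors — requires combining the $\Phi_1$ and $\Phi_2$ tail estimates carefully and using the full $U_\e(p,q)$ hypothesis on both coordinates simultaneously, exactly the delicate point already navigated (somewhat implicitly) in Lemma~\ref{lem: contraction}.
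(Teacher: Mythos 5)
Your proposal is correct in substance but takes a genuinely different route from the paper. After the common first step $|\hat{\E}F_{\theta'}-\Psi|\le 4\gamma^2(\hat c,c)$ via \eqref{eq: error between hat M and Psi}, the paper never passes through $\E F_{\theta'}(s)$: it conditions on $A$, splits $J$ into $J_0\cup J_1$ according to the value of $A_{ij}$, and shows that on the per-entry high-probability events $\{S_{ij}\ge(1-q-\e_1)N\}$ and $\{S_{ij}\le (p+\e_1)N\}$ the value $F_{\theta'}(S_{ij})$ is within $\d^{-1}(1+4\d)^{-N\e(h(q)-q)}$ (resp.\ the analogous quantity in $p$) of $A_{ij}$ \emph{simultaneously for all} admissible $\theta'$, because the bound on $Z_{ij}^\tran\eta(\theta')$ is deterministic given $S_{ij}$. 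Uniformity in $\theta'$ is therefore free, and the only stochastic input is binomial concentration (Lemma~\ref{lem: binom tail bound}) of $|J_1|$ and of the counts of exceptional indices, which yields the $r/\sqrt{\d|J|}$ term with no chaining. Your bias--variance decomposition around $\E F_{\theta'}(s)$ plus an $\e$-net achieves the same rates and is more modular, but two points need care. First, the Lipschitz constant of $\theta'\mapsto F_{\theta'}(t)$ is not $O(1/\d)$: since $F_{\theta'}(t)=\big(1+\exp(Z^\tran\eta(\theta'))\big)^{-1}$ with $\|Z\|\asymp N$, the constant is $O(N/\d)$; this is harmless because it only enters through the log-cardinality of the net, $O(\log(N/\d))$, absorbed into $C$, but it should be stated correctly. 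Second, the ``delicate'' combination you anticipate for the product exponent is a non-issue: since $(h(p)-p)^2,(h(q)-q)^2\le 1$, we have $\varphi(p,q)\le\min\{(h(p)-p)^2,(h(q)-q)^2\}$, so $(1+\d)^{-N\e^2\varphi(p,q)}$ dominates each of the two separate exponential terms and the sum is absorbed at the cost of a factor of two --- exactly the weakening the paper uses. Finally, note that your instinct to read the centering of $\hat{\E}G_{\theta'}$ off from $\E G_\theta(s)$ gives $\E[sF_\theta(s)]=Nw(1-q)$, not the $Npw$ displayed in the lemma; the displayed centering appears to be a typo in the statement (the paper's own proof omits the second inequality), and your approach proves the corrected version.
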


\begin{proof}

\medskip
We show the first inequality; the second inequality is proved using a similar argument. From \eqref{eq: def of Psi} and \eqref{eq: error between hat M and Psi} we have
\begin{equation}\label{eq: change bound of hatmw to Psi}
|\hat{M}(w')-w| \le |\hat{M}(w')-\Psi| + |\Psi - w|\le 4\gamma^2(\hat{c},c) + |\Psi - w|.  
\end{equation}
Therefore it is enough to bound $|\Psi-w|$. 
Denote by $J_0$ the set of indices $(i,j)\in J$ such that $A_{ij}=0$ and by $J_1$ the set of indices $(i,j)\in J$ such that $A_{ij}=1$. Then $S_{ij}\sim{\rm Binomial}(N,p)$ if $(i,j)\in J_0$ and $S_{ij}\sim{\rm Binomial}(N,1-q)$ if $(i,j)\in J_1$. By Lemma~\ref{lem: binom tail bound}, for $r>0$ we have
\begin{equation}\label{eq: bound on J_1}
\Pr{\big||J_1|-|J|w\big|> r\sqrt{|J|}} \le 2\exp(-2r^2).
\end{equation}
Note that
\begin{equation}\label{eq: decomposition of Psi}
\Psi =  \frac{1}{|J|} \sum_{(i,j)\in J_0} F_{\theta'}(S_{ij})
+ \frac{1}{|J|} \sum_{(i,j)\in J_1} F_{\theta'}(S_{ij}) =: \Psi_0+\Psi_1.
\end{equation}

We first show that $\Psi_1$ is close to $w$. Note that
$ F_{\theta'}(S_{ij}) = 1/\big(1+\exp(Z_{ij}^\tran \eta(\theta'))\big)$,
where
\begin{equation*}
Z_{ij} := (1,N-S_{ij},-S_{ij})^\tran, \quad \eta(\theta') := \left(\log\frac{1-w'}{w'},\log\frac{1-p'}{q'},\log\frac{1-q'}{p'}\right)^\tran.
\end{equation*} 
Condition on $A$, by Lemma~\ref{lem: binom tail bound}, for any $\e_1>0$ and $(i,j)\in J_1$ the following holds
\begin{equation}\label{eq: conditional prob of Sij}
\Pr{S_{ij}<(1-q-\e_1)N} \le \exp(-2\e_1^2 N).
\end{equation}
Therefore  with conditional probability at least $1-\exp(-2\e_1^2 N)$ we have
\begin{eqnarray}
\nonumber Z_{ij}^\tran\eta(\theta')  
&=& \log\frac{1-w'}{w'} + \log \frac{(1-p')(1-q')}{p'q'} \big[N H(p',q')-S_{ij}\big]\\
&\le& \label{eq: Z eta 1}
\log\frac{1-w'}{w'} + N\log \frac{(1-p')(1-q')}{p'q'}\big[H(p',q')-(1-q-\e_1)\big]. 
\end{eqnarray} 
Here $H$ is the function defined in \eqref{eq: function H}.
Since $(p',q')\in U_\e(p,q)$, it follows that $h(q')\ge \e h(q)+(1-\e)q$. Using the fact that $H(p',q')$ is increasing in $p'$ and $h$ is increasing, we have
$$
H(p',q')-(1-q) \le H(1/2,q')-1+q=-h(q')+q \le -\e(h(q)-q) < 0.  
$$
Note that if $\d\le t\le 1/2-\d$ then $1+4\d\le (1-t)/t\le (1-\d)/\d$. Therefore for $\theta'\in R_\d$, using $\e_1=\e(h(q)-q)/2$, we obtain
\begin{equation*}
Z_{ij}^\tran\eta(\theta') \le \log\frac{1-\d}{\d} - N\e (h(q)-q) \log(1+4\d).
\end{equation*}
Denote by $I_1$ the set of indices $(i,j)\in J_1$ such that $S_{ij} \ge (1-q-\e_1)N$.
Using inequalities $0\le F_{\theta'}(S_{ij}) \le 1$ and $1/(1+x)\ge 1-x$ for $x\ge 0$, this implies
\begin{equation}\label{eq: bound sum over I1}
|I_1| \left(1-\frac{1}{\d} \big(1+4\d\big)^{-N\e(h(q)-q)} \right)\le \sum_{(i,j)\in I_1} F_{\theta'}(S_{ij}) \le |J_1|.
\end{equation}

To lower bound $|I_1|$, note that $|I_1|$ is a sum of independent Bernoulli random variables with success probabilities at least $1-\exp(-2\e_1^2N)$ by \eqref{eq: conditional prob of Sij}. By Lemma~\ref{lem: binom tail bound} and condition on $A$, we have 
$$\Pr{|I_1|\ge|J_1|\left(1-\exp(-2\e_1^2N)-r/\sqrt{\d|J|}\right)}\ge 1-\exp\left(-2r^2|J_1|/(\d|J|)\right).$$
Using \eqref{eq: bound on J_1}, assumption $|J|\ge 4r^2/\d^2$ and $\e_1=\e(h(q)-q)/2$, we obtain that with probability at least $1-3\exp(-r^2)$ the following holds:  
$$
\frac{|I_1|}{|J|}\ge w-\exp\left(\frac{-N\e^2(h(q)-q)^2)}{2}\right)-\frac{2r}{\sqrt{\d|J|}}.
$$
It then follows from \eqref{eq: bound on J_1} and \eqref{eq: bound sum over I1} that with probability at least $1-3\exp(-r^2)$ the following holds:
\begin{equation}\label{eq: bound of Psi1}
w-\frac{2}{\d} \big(1+\d\big)^{-N\e^2(h(q)-q)^2}-\frac{2r}{\sqrt{\d|J|}}
\le \sup_{\theta\in R_\d} \Psi_1 \le w+\frac{r}{\sqrt{|J|}}.
\end{equation}

Similarly, with probability at least $1-3\exp(-r^2)$ we have
\begin{equation}\label{eq: upper bound on Psi zero}
\sup_{\theta'\in R_\d}\Psi_0\le
\frac{2}{\d} \big(1+\d\big)^{-N\e^2(h(p)-p)^2}+\frac{r}{\sqrt{\d|J|}}.
\end{equation}
From \eqref{eq: bound of Psi1}, \eqref{eq: upper bound on Psi zero} and using a triangle inequality, we obtain that with probability at least $1-6\exp(-r^2)$:
\begin{equation*}
\sup_{\theta'\in R_\d}|\Psi-w| \le \frac{4}{\d} \big(1+\d\big)^{-N\e^2[h(p)-p]^2 [h(q)-q]^2}+\frac{4r}{\sqrt{\d|J|}}.
\end{equation*} 
Together with \eqref{eq: change bound of hatmw to Psi} this provides a bound on $|\hat{M}(w')-w|$.
\end{proof}

\begin{corollary}[Sample updates]\label{cor: sample updates}
Let $\delta \in (0,1/4)$ and $\e\in (0,1)$. Then there exist constants $C_1,C_2>0$ such that for any $r>0$ the following holds with probability at least $1-\exp(-r)$.
Assume that $\theta', \theta\in R_\delta$, $(p',q')\in U_\e(p,q)$ and
$$
N\ge \frac{C_1\log(1/\d)}{\e^2\big[h(p)-p\big]^2\big[h(q)-q\big]^2}.
$$
Then
\begin{eqnarray*}
\sup_{\theta'\in R_\d}\|\hat{M}(\theta')-M(\theta')\| &\le& \frac{ C_2 r \Phi + 8\gamma^2(\hat{c},c)}{\d},
\end{eqnarray*}
where 
$$
\Phi := \min\left\{\log^2\left(\frac{1}{\delta}-1\right) \sqrt{\frac{N}{|J|}}, \ \ \frac{1}{\d} \big(1+\d\big)^{-N\e^2[h(p)-p]^2 [h(q)-q]^2}+\frac{1}{\sqrt{\d|J|}}\right\}.
$$
\end{corollary}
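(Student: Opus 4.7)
The plan is to reduce $\|\hat M(\theta') - M(\theta')\|$ to uniform deviations of the empirical functionals $\hat\E F_{\theta'}, \hat\E G_{\theta'}, \hat\E L_{\theta'}$ from their population counterparts, and then to combine the two preceding lemmas and Lemma~\ref{lem: contraction} to obtain the two bounds whose minimum defines $\Phi$. Starting from \eqref{eq: population updates} and \eqref{eq: sample updates}, the $w$-component is immediate, $\hat M(w') - M(w') = \hat\E F_{\theta'} - \E F_{\theta'}(s)$, while for the $p$-component the quotient rule yields
$$
\hat M(p') - M(p') \;=\; \frac{\hat\E L_{\theta'} - \E L_{\theta'}(s)}{N(1-\hat\E F_{\theta'})} \;+\; M(p')\,\frac{\hat\E F_{\theta'} - \E F_{\theta'}(s)}{1-\hat\E F_{\theta'}},
$$
with an analogous identity for the $q$-component in terms of $\hat\E G_{\theta'} - \E G_{\theta'}(s)$. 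To exploit this I would show that $1-\hat\E F_{\theta'}$ and $1-\E F_{\theta'}(s)$ are bounded below by $\d/2$ uniformly in $\theta'\in R_\d$: since $w \le 1-\d$ (as $\theta \in R_\d$), it suffices that both $\hat\E F_{\theta'}$ and $\E F_{\theta'}(s) = M(w')$ lie within $\d/2$ of $w$, which follows from Lemma~\ref{lem: sample updates large N} and from Lemma~\ref{lem: contraction} (applied as $|M(w)-M(w')|$, using $M(\theta) = \theta$) respectively, under the corollary's hypothesis on $N$. This reduces the claim to uniform control of the three empirical deviations multiplied by $1/\d$.

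Next I would produce two uniform upper bounds on each of $|\hat\E F_{\theta'} - \E F_{\theta'}(s)|$, $|\hat\E G_{\theta'} - \E G_{\theta'}(s)|$, $|\hat\E L_{\theta'} - \E L_{\theta'}(s)|$, and take the minimum. The first is exactly the content of Lemma~\ref{lem: sample updates}, which contributes the $\log^2(1/\d-1)\sqrt{N/|J|}$ entry of $\Phi$ together with the additive $\gamma^2(\hat c, c)$ error. The second uses the triangle inequality
$$
|\hat\E F_{\theta'} - \E F_{\theta'}(s)| \;\le\; |\hat\E F_{\theta'} - w| \;+\; |w - \E F_{\theta'}(s)|,
$$
bounding the first summand by Lemma~\ref{lem: sample updates large N} and the second (which equals $|M(w)-M(w')|$) by Lemma~\ref{lem: contraction}; parallel decompositions about $Npw$ and $N(1-w)p$ handle the $G$- and $L$-functionals. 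This yields the alternative entry $(1/\d)(1+\d)^{-N\e^2\varphi(p,q)}+1/\sqrt{\d|J|}$ of $\Phi$, and taking the minimum of the two bounds together with the $1/\d$ factor from the denominator step gives the stated conclusion.

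The main technical nuisance is reconciling the two slightly different exponential factors: Lemma~\ref{lem: contraction} carries a polynomial prefactor $N/\d^4$ multiplying $\exp(-N\d\e^2\varphi(p,q))$, whereas Lemma~\ref{lem: sample updates large N} already has the cleaner $(1/\d)(1+\d)^{-N\e^2\varphi(p,q)}$. Using $\log(1+\d)\ge \d/2$ the two exponents agree up to a constant, and the $N/\d^4$ prefactor from the contraction lemma can be absorbed into the exponential at the cost of a worse constant provided $N\e^2\varphi(p,q)$ dominates $\log(N/\d)$, which is precisely what the hypothesis $N \ge C_1 \log(1/\d)/[\e^2(h(p)-p)^2(h(q)-q)^2]$ delivers when $C_1$ is chosen large enough. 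Beyond this, the only remaining work is the routine bookkeeping of summing contributions over the three components of $\theta$ and of absorbing the $\gamma^2(\hat c, c)$ terms coming from the two lemmas into the single factor $8\gamma^2(\hat c, c)/\d$ stated in the conclusion.
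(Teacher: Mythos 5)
Your proposal is correct and follows essentially the same route as the paper's (much terser) proof: both reduce $\|\hat M(\theta')-M(\theta')\|$ to the uniform deviations controlled by Lemma~\ref{lem: sample updates} and Lemma~\ref{lem: sample updates large N}, and both invoke Lemma~\ref{lem: contraction} to keep the denominators of the $p$- and $q$-components bounded away from zero. Your write-up actually supplies details the paper omits, notably the quotient-rule decomposition and the reconciliation of the two exponential prefactors under the hypothesis on $N$.
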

\begin{proof}
Recall the components of $\hat{M}(\theta')$ and $M(\theta')$ in \eqref{eq: population updates} and \eqref{eq: sample updates}. Lemma~\ref{lem: sample updates} and Lemma~\ref{lem: sample updates large N} show that $\hat{M}(w')$ concentrates around $M(w')$; they also show that numerators and denominators of $\hat{M}(p')$ and $\hat{M}(q')$ concentrate around that of $M(p')$ and $M(q')$. To obtain a bound for $\|\hat{M}(\theta')-M(\theta')\|$, it remains to bound the denominators of $M(p')$ and $M(q')$ away from zero. That is done by the help of Lemma~\ref{lem: contraction}.
\end{proof}

\medskip
We now show that initial parameter estimates of our algorithm belong to a desired neighborhood of the true parameters; this allows us to establish consistency of our algorithm.
Denote by $\theta_0=(w_0,p_0,q_0)^\tran$ the initial value of $\theta$ taken by our algorithm, and
\begin{equation}\label{eq: def of I}
I=\big\{(i,j)\in J: S_{ij} < N/2\big\}, \quad \hat{I}=\big\{(i,j)\in \hat{J}: S_{ij} < N/2\big\}.
\end{equation}
From Section~\ref{subsec: em-type algorithm} we have
\begin{equation}\label{eq: intial parameter estimates}
w_0 = \frac{|\hat{J}\setminus\hat{I}|}{|\hat{J}|}, \qquad
p_0 = \frac{\sum_{(i,j)\in \hat{I}}S_{ij}}{N|\hat{I}|}, \qquad
q_0 = \frac{\sum_{(i,j)\in \hat{J}\setminus\hat{I}} (N-S_{ij}) }{N|\hat{J}\setminus\hat{I}|}.
\end{equation}
For $x\in[0,1/2]$, denote
\begin{equation}\label{eq: function ell}
  \phi(x) := x-h^{-1}\left(\frac{x}{2}+\frac{h(x)}{2}\right),
\end{equation}
where $h$ is the function defined in \eqref{eq: function H}.
Note that $\phi(x)\ge 0$ and $\phi(x)=0$ if and only if $x=0$ or $x=1/2$.
Recall the definition of $U_\e(p,q)$ in \eqref{eq: neighborhood of p* q*} and definition of $R_\delta$ in \eqref{eq: R delta}.
\begin{lemma}[Validity of initial parameter estimates]\label{lem: validity of initial estimates}
Assume that $\theta\in R_\delta$ and the following conditions hold for some constant $C>0$:
\begin{eqnarray*}
  N &\ge& \frac{C}{\delta^2}\max\left\{\log\frac{1}{\delta^2}, \log\frac{1}{\delta\phi(p)}, \ \log\frac{1}{\delta\phi(q)}\right\}, \\
  |J| &\ge& \frac{C}{\delta^3}
  \max\left\{\frac{r^2}{\delta^2},\frac{r^2}{\phi^2(p)}, \frac{r^2}{\phi^2(q)}\right\}, \\
  \gamma^2(\hat{c},c) &\le& \frac{\delta}{C}  \max\left\{\delta, \phi(p), \phi(q) \right\},
\end{eqnarray*}
where $\phi$ is the function defined in \eqref{eq: function ell}.
Then $\theta_0\in R_{\delta}$ and $(p_0,q_0)\in U_{1/2}(p,q)$ with probability at least $1-25\exp(-r^2)$.
\end{lemma}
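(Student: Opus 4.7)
The plan is to decompose each of $w_0,p_0,q_0$ from \eqref{eq: intial parameter estimates} into an oracle quantity plus error terms, and to check that the three hypotheses of the lemma are each calibrated to kill one error term.

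First, I would introduce the ``true'' partition $J_0=\{(i,j)\in J:A_{ij}=0\}$, $J_1=J\setminus J_0$, and the majority--vote misclassified set $E=\{(i,j)\in J_0:S_{ij}\ge N/2\}\cup\{(i,j)\in J_1:S_{ij}<N/2\}$. Since $A_{ij}$ on $J$ are i.i.d.\ $\mathrm{Bernoulli}(w)$, Hoeffding yields $\big||J_1|-w|J|\big|\le r\sqrt{|J|}$ with probability at least $1-2e^{-2r^2}$. For each $(i,j)\in J_0$ we have $S_{ij}\sim\mathrm{Binomial}(N,p)$ with $p\le 1/2-\delta$, so Lemma~\ref{lem: binom tail bound} gives $\Pr{S_{ij}\ge N/2}\le e^{-2\delta^2 N}$; analogously for $J_1$. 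Applying Hoeffding to the sums of indicators defining $|E\cap J_0|$ and $|E\cap J_1|$ yields $|E|\le e^{-2\delta^2 N}|J|+r\sqrt{|J|}$ on another event of probability $\ge 1-2e^{-2r^2}$. The hypothesis $N\ge C\delta^{-2}\max\{\log(1/\delta^2),\log(1/(\delta\phi(p))),\log(1/(\delta\phi(q)))\}$ then forces $e^{-2\delta^2 N}\le C^{-1}\delta\min\{\delta,\phi(p),\phi(q)\}$.

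Second, I would relate $\hat I,\hat J$ to $I,J,J_0,J_1$ through the community error. Because $\hat I\subseteq\hat J$ and $I\subseteq J$, the definition of $\gamma^2(\hat c,c)$ gives $|\hat J\triangle J|\le 2\gamma^2(\hat c,c)|J|$ and hence $|\hat I\triangle I|\le 2\gamma^2(\hat c,c)|J|$. Since $I\triangle J_0\subseteq E$, the triangle inequality for symmetric differences yields
\[
|\hat I\triangle J_0|\le 2\gamma^2(\hat c,c)|J|+|E|,\qquad |(\hat J\setminus\hat I)\triangle J_1|\le 2\gamma^2(\hat c,c)|J|+|E|.
\]
Using these bounds with $|\hat J|\ge(1-2\gamma^2(\hat c,c))|J|$ and the concentration of $|J_1|$, a short calculation yields $|w_0-w|\le C\big(\gamma^2(\hat c,c)+e^{-2\delta^2 N}+r/\sqrt{|J|}\big)$, which under the three hypotheses is $\le \delta/2$; hence $w_0\in[\delta,1-\delta]$.

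Third, for $p_0$ I would split the numerator over $J_0\cap\hat I$ and $\hat I\setminus J_0$. The first part is the average of $S_{ij}/N$ over a subset of $J_0$, and on $J_0$ the $S_{ij}$ are i.i.d.\ $\mathrm{Binomial}(N,p)$; Hoeffding applied to the $N|J_0|$ underlying Bernoulli variables gives $\big|\sum_{J_0}S_{ij}/(N|J_0|)-p\big|\le r/\sqrt{N|J_0|}$. The second part, together with the denominator correction, contributes at most $|\hat I\triangle J_0|/|\hat I|$ since $S_{ij}/N\in[0,1]$. Combining,
\[
|p_0-p|\le C\left(\gamma^2(\hat c,c)+e^{-2\delta^2 N}+\frac{r}{\sqrt{|J|}}+\frac{r}{\sqrt{N|J|}}\right).
\]
By the calibration in the hypotheses (in particular $|J|\ge Cr^2/(\delta^3\phi^2(p))$ and $\gamma^2(\hat c,c)\le \delta\phi(p)/C$), each of the four summands is at most $\phi(p)/(4C')$ for a suitable constant, so $|p_0-p|<\phi(p)$. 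By definition of $\phi$ in \eqref{eq: function ell}, this gives $p_0>p-\phi(p)=h^{-1}\big((p+h(p))/2\big)$, the lower-coordinate condition for $U_{1/2}(p,q)$. Running the same argument with the $\max\{\delta,\phi(p),\phi(q)\}$-part equal to $\delta$ instead of $\phi(p)$ furnishes the complementary bound $|p_0-p|\le\delta/2$, placing $p_0$ in $[\delta,1/2-\delta]$. The analogous treatment of $q_0$ closes the proof.

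The main obstacle is bookkeeping: each of the three error sources (community mislabeling, majority-vote flips, and binomial sampling variability) must be individually driven below $\min\{\delta,\phi(p),\phi(q)\}$, and the conditions of the lemma are designed so that the exponential term $e^{-2\delta^2 N}$ balances against the polynomial rates $r/\sqrt{|J|}$, $r/\sqrt{N|J|}$ and against $\gamma^2(\hat c,c)$ simultaneously via the logarithmic factors in the condition on $N$. A union bound over the $\leq 10$ high-probability events used above accounts for the factor $25$ in the stated probability.
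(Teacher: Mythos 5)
Your proposal is correct and follows essentially the same route as the paper's proof: partition $J$ by the true values of $A_{ij}$, control the majority-vote flips via binomial tail bounds, absorb the community-label error through $\gamma^2(\hat c,c)$, and calibrate each of the three hypotheses against one of the three error sources so that $w_0$ lands in $[\delta/2,1-\delta/2]$ and $p_0,q_0$ land within $\phi(p),\phi(q)$ of the truth, which is exactly the membership condition for $U_{1/2}(p,q)$. The only cosmetic differences are that you organize the bookkeeping via symmetric differences and a single misclassified set $E$ rather than the paper's explicit sandwich bounds, and you use Hoeffding on the $N|J_0|$ underlying Bernoulli variables where the paper invokes Bernstein; neither changes the substance.
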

\begin{proof}
We partition $J$ as $J = J_0\cap J_1$, where
$$
J_0 = \{(i,j)\in J: A_{ij}= 0\}, \qquad J_1 = \{(i,j)\in J: A_{ij}=1\}.
$$
Similarly, we partition $\hat{J}$ as $\hat{J}=\hat{J}_0\cup\hat{J}_1$, where $\hat{J}_0$ and $\hat{J}_1$ are sets of indices $(i,j)\in \hat{J}$ where $A_{ij}=0$ and $A_{ij}=1$, respectively. Also, denote $I_c = J\setminus I$ and $\hat{I}_c=\hat{J}\setminus\hat{I}$, where $I$ and $\hat{I}$ are defined by \eqref{eq: def of I}.

\medskip
\noindent{\em Edge probability estimate $w_0$.}
Recall that $w_0 = |\hat{I}_c|/|\hat{J}|$, where $\hat{I}_c$ is the set of $(i,j)\in \hat{J}$ where $S_{ij}\ge N/2$.
Since
$$
\hat{I}_c =  \big(\hat{I}_c \cap J_1\big) \cup \big(\hat{I}_c\cap J_0\big) \cup \big(\hat{I}_c\setminus J\big), \quad \hat{I}_c\cap J_0 \subseteq I_c\cap J_0,
$$
it follows that $| \hat{I}_c| \le |J_1|+|I_c\cap J_0|+|\hat{J}\setminus J| $.
Moreover, since 
$$
\hat{I}_c\cap J_1 \subseteq I_c\cap J_1, \quad
(I_c\cap J_1)\setminus (\hat{I}_c\cap J_1) \subseteq J\setminus \hat{J},
$$
we have $|\hat{I}_c| \ge |I_c\cap J_1|-|J\setminus \hat{J}|$.
Together with the definition of $\gamma^2(\hat{c},c)$, we obtain
\begin{equation}\label{eq: w bounds detection error}
\frac{|I_c \cap J_1|-\gamma^2(\hat{c},c)|J|}{|J|+\gamma^2(\hat{c},c)|J|}
\le w_0 \le \frac{|J_1| + |I_c\cap J_0| + \gamma^2(\hat{c},c)|J|}{|J|-\gamma^2(\hat{c},c)|J|}.
\end{equation}
We now estimate the terms in \eqref{eq: w bounds detection error}. Note that $S_{ij}$ follows a mixture of Binomial distributions with parameter $\theta$ for every $(i,j)\in J$.
By Lemma~\ref{lem: binom tail bound}, we have
\begin{equation}\label{eq: |J_1| upper bound}
  \Pr{|J_1|\le w|J| + r|J|^{1/2}} \ge 1-\exp(-2r^2).
\end{equation}

We now upper bound $|I_c\cap J_0|$.
Condition on $A$, for every $(i,j)\in J_0$, by Lemma~\ref{lem: binom tail bound} we have
$$
\Pr{S_{ij}\ge N/2} \le \exp(-2\delta^2 N).
$$
Since $|I_c\cap J_0|$ is a sum of Bernoulli random variables with success probability at most $\exp(-2\delta^2 N)$,
by Lemma~\ref{lem: binom tail bound},
$$
\P\Big\{ |I_c\cap J_0| \le (\exp(-2\delta^2 N)+\e)|J_0| \Big\} \ge 1-\exp(-2\e^2 |J_0|).
$$
Also, using Lemma~\ref{lem: binom tail bound} and assumption $|J|\ge 4r^2/\delta^2$, we have
\begin{equation*}\label{eq: crude lower bound for |J_0|}
  \P\Big\{|J_0|\ge \delta |J|/2\Big\} \ge 1-\exp(-2r^2).
\end{equation*}
Setting $\e = r/(\delta|J|)^{1/2}$ and using $|J_0|\le |J|$, we obtain
\begin{equation}\label{eq: upper bound of I tilde cap J_0 complete}
  \Pr{ |I_c\cap J_0| \le \exp(-2\delta^2 N)|J| + r(|J|/\delta)^{1/2} } \ge 1-2\exp(-r^2).
\end{equation}

We lower bound $|I_c\cap J_1|$ in a similar way.
Condition on $A$, by Lemma~\ref{lem: binom tail bound}, for each entry $(i,j)\in\cap J_1$ we have
$$
\Pr{S_{ij}\ge N/2} \ge 1 - \exp(-2\delta^2 N).
$$
Since $|I_c\cap J_1|$ is a sum of Bernoulli random variables with success probability at least $1-\exp(-2\delta^2 N)$, by Lemma~\ref{lem: binom tail bound},
$$
\Pr{|I_c\cap J_1| \ge \big(1-\exp(-2\delta^2 N)-\e\big)|J_1|} \ge 1-\exp(-2\e^2|J_1|).
$$
Also, by Lemma~\ref{lem: binom tail bound},
\begin{equation}\label{eq: lower bound for |J_1|}
  \P\Big\{|J_1|\ge w^*|J| - r|J|^{1/2} \Big\} \ge 1-\exp(-2r^2).
\end{equation}
Setting $\e = r/(\delta|J|)^{1/2}$ and using assumption $|J| \ge 4r^2/\delta^2$, we obtain
\begin{equation}\label{eq: upper bound of I tilde cap J_1 complete}
  \Pr{ |I_c\cap J_1| \ge w|J| -\exp(-2\delta^2 N)|J| -2 r(|J|/\delta)^{1/2} } \ge 1-2\exp(-r^2).
\end{equation}

We are now ready to bound $w_0$ using \eqref{eq: w bounds detection error} and subsequent estimates of terms in \eqref{eq: w bounds detection error}.
Using \eqref{eq: |J_1| upper bound}, \eqref{eq: upper bound of I tilde cap J_0 complete}, \eqref{eq: upper bound of I tilde cap J_1 complete} and assumptions on $N$, $|J|$ and $\gamma^2(\hat{c},c)$, we obtain
\begin{equation}\label{eq: bound on w_0 final}
  \Pr{ \frac{\delta}{2} \le w_0 \le 1-\frac{\delta}{2}} \ge 1-5\exp(-r^2).
\end{equation}

\medskip
\noindent{\em False positive estimate $p_0$.}
Recall that
$$p_0 = \frac{\sum_{(i,j)\in \hat{I}}S_{ij}}{N|\hat{I}|},$$
where $\hat{I}$ is the set of $(i,j)\in \hat{J}$ such that $S_{ij}<N/2$.
Note that
$$
\hat{I} =  \big(\hat{I} \cap J_0\big) \cup \big(\hat{I} \cap J_1\big) \cup \big(\hat{I} \setminus J\big).
$$
Using the partition of $\hat{I}$, the bound $S_{ij}\le N$ and the definition of $\gamma^2(\hat{c},c)$, we obtain
\begin{equation}\label{eq: first two way bounds for p_0}
\frac{\sum_{(i,j)\in J_0} S_{ij}-N|I_c\cap J_0|-N\gamma^2(\hat{c},c)|J|}{N\big(|J_0|+|I\cap J_1|+\gamma^2(\hat{c},c)|J|\big)}\le p_0
\le \frac{\sum_{(i,j)\in J_0} S_{ij}+N|I\cap J_1| + N\gamma^2(\hat{c},c)|J|}{N|I\cap J_0|-N\gamma^2(\hat{c},c)|J|}
\end{equation}
We now estimate the terms in \eqref{eq: first two way bounds for p_0}.
By Lemma~\ref{lem: binom tail bound}, we have
\begin{equation}\label{eq: |J_0| upper bound}
  \Pr{|J_0|\le (1-w)|J| + r|J|^{1/2}} \ge 1-\exp(-2r^2).
\end{equation}

To upper bound $|I\cap J_1|$, we first condition on $A$. By Lemma~\ref{lem: binom tail bound}, for each $(i,j)\in\cap J_1$ we have
$$
\Pr{S_{ij}<N/2} \le \exp(-2\delta^2 N).
$$
Since $|I\cap J_1|$ is a sum of Bernoulli random variables with success probability at most $\exp(-2\delta^2 N)$, by Lemma~\ref{lem: binom tail bound},
$$
\Pr{|I\cap J_1| \le \big(\exp(-2\delta^2 N)+\e\big)|J_1|} \ge 1-\exp(-2\e^2|J_1|).
$$
Also, using Lemma~\ref{lem: binom tail bound} and assumption  $|J|\ge 4r^2/\delta^2$, we have
\begin{equation*}\label{eq: lower bound on |J_0|}
  \P\Big\{|J_1|\ge \delta |J|/2\Big\} \ge 1-\exp(-2r^2).
\end{equation*}
Setting $\e = r/(\delta|J|)^{1/2}$, we obtain
\begin{equation}\label{eq: upper bound of I cap J_1 complete}
  \Pr{ |I\cap J_1| \le \exp(-2\delta^2 N)|J| + r(|J|/\delta)^{1/2} } \ge 1-2\exp(-r^2).
\end{equation}

We now lower bound $|I\cap J_0|$.
Condition on $A$, by Lemma~\ref{lem: binom tail bound}, for each entry $(i,j)\in\cap J_0$ we have
$$
\Pr{S_{ij} < N/2} \ge 1 - \exp(-2\delta^2 N).
$$
Since $|I\cap J_0|$ is a sum of Bernoulli random variables with success probability at least $1-\exp(-2\delta^2 N)$, by Lemma~\ref{lem: binom tail bound},
$$
\Pr{|I\cap J_0| \ge \big(1-\exp(-2\delta^2 N)-\e\big)|J_0|} \ge 1-\exp(-2\e^2|J_0|).
$$
By Lemma~\ref{lem: binom tail bound},
\begin{equation}\label{eq: lower bound for |J_0|}
  \P\Big\{|J_0|\ge (1-w)|J| - r|J|^{1/2} \Big\} \ge 1-\exp(-2r^2).
\end{equation}
Setting $\e = r/(\delta|J|)^{1/2}$ and using assumption $|J| \ge 4r^2/\delta^2$, we obtain
\begin{equation}\label{eq: upper bound of I cap J_0 complete}
  \Pr{ |I\cap J_0| \ge (1-w^*)|J| -\exp(-2\delta^2 N)|J| -2 r(|J|/\delta)^{1/2} } \ge 1-2\exp(-r^2).
\end{equation}

It remains to control $\sum_{(i,j)\in J_0} S_{ij}$.
Condition on $A$, by Bernstein's inequality, we have
\begin{equation*}
\Pr{\Big| \sum_{(i,j)\in J_0}  S_{ij}- |J_0| N p \Big| > t} \le 2 \exp\left(\frac{-t^2/2}{|J_0|Np(1-p) +Nt/3 }\right).
\end{equation*}
Taking $t=r(|J|N)^{1/2}$, using \eqref{eq: |J_0| upper bound}, \eqref{eq: lower bound for |J_0|} and assumption $|J|\ge 2r^2N/\delta^4$, we obtain
\begin{equation}\label{eq: sum S in J_0}
\Pr{\Big|\sum_{(i,j)\in J_0} S_{ij} - N|J|(1-w)p  \Big|  \le  2rN|J|^{1/2}} \ge 1-3\exp(-r^2).
\end{equation}

We are now ready to bound $p_0$ using \eqref{eq: first two way bounds for p_0} and subsequent estimates of terms in \eqref{eq: first two way bounds for p_0}. Using \eqref{eq: |J_0| upper bound}, \eqref{eq: upper bound of I cap J_1 complete}, \eqref{eq: upper bound of I tilde cap J_0 complete}, \eqref{eq: upper bound of I cap J_0 complete}, \eqref{eq: sum S in J_0} and assumptions on $N$, $|J|$ and $\gamma^2(\hat{c},c)$, we obtain
\begin{equation}\label{eq: p_0 bound complete}
  \Pr{ \max\left\{ \delta, h^{-1}\left(\frac{q}{2}+\frac{h(q)}{2}\right)\right\} \le p_0 \le \frac{1}{2} - \delta} \ge 1- 10\exp(-r^2).
\end{equation}

\medskip
\noindent{\em False negative estimate $q_0$.} Recall that
$$
q_0 = \frac{\sum_{(i,j)\in \hat{I}_c} (N-S_{ij}) }{N|\hat{I}_c|},
$$
where $\hat{I}_c$ is the set of all $(i,j)\in \hat{J}$ such that $S_{ij}\ge N/2$.
Note that
$$
\hat{I}_c =  \big(\hat{I}_c \cap J_1\big) \cup \big(\hat{I}_c\cap J_0\big) \cup \big(\hat{I}_c \setminus J\big).
$$
Using the partition of $\hat{I}_c$, the bound $N-S_{ij}\le N$ and the definition of $\gamma^2(\hat{c},c)$, we obtain
\begin{equation}\label{eq: q_0 bounds first}
  \frac{\sum_{(i,j)\in J_1} (N-S_{ij})-N|I\cap J_1|-N\gamma^2(\hat{c},c)|J|}{N\left(|J_1|+|I_c\cap J_0|+\gamma^2(\hat{c},c)|J|\right)} \le
  q_0  \le \frac{\sum_{(i,j)\in J_1} (N-S_{ij}) +  N|I_c\cap J_0| + N\gamma^2(\hat{c},c)|J|}{N|I_c\cap J_1|-N\gamma^2(\hat{c},c)|J|}.
\end{equation}

All terms in \eqref{eq: q_0 bounds first} have been estimated except $\sum_{(i,j)\in J_1} (N-S_{ij})$.
Condition on $A$, by Bernstein's inequality, we have
\begin{equation*}
\Pr{\Big| \sum_{(i,j)\in J_1}  (N-S_{ij})- |J_1| N q \Big| > t} \le 2 \exp\left(\frac{-t^2/2}{|J_1|Nq(1-q) +Nt/3 }\right).
\end{equation*}
Taking $t=r(|J|N)^{1/2}$, using \eqref{eq: |J_1| upper bound}, \eqref{eq: lower bound for |J_1|} and $|J|\ge 2r^2N/\delta^4$, we obtain
\begin{equation}\label{eq: sum S in J_1}
\Pr{\Big|\sum_{(i,j)\in J_1}  (N-S_{ij}) - N|J|wq  \Big|  \le  2rN|J|^{1/2}} \ge 1-3\exp(-r^2).
\end{equation}

We are now ready to bound $q_0$ using \eqref{eq: q_0 bounds first} and subsequent estimates of terms in \eqref{eq: q_0 bounds first}.
Using \eqref{eq: |J_1| upper bound}, \eqref{eq: upper bound of I tilde cap J_0 complete}, \eqref{eq: upper bound of I tilde cap J_1 complete}, \eqref{eq: upper bound of I cap J_1 complete}, \eqref{eq: sum S in J_1} and assumptions on $N$, $|J|$ and $\gamma^2(\hat{c},c)$, we obtain
\begin{equation}\label{eq: bound on q_0 complete}
  \Pr{ \max\left\{ \delta, h^{-1}\left(\frac{q}{2}+\frac{h(q)}{2}\right)\right\} \le  q_0 \le \frac{1}{2} - \delta } \le 10\exp(-r^2).
\end{equation}

Finally, the claim of Lemma~\ref{lem: validity of initial estimates} follows from \eqref{eq: bound on w_0 final}, \eqref{eq: p_0 bound complete} and \eqref{eq: bound on q_0 complete}.
\end{proof}

%
%

\begin{proof}[Proof of Theorem~\ref{thm: guarantee of EM algorithm}]\label{pr: proof of main theorem}
  The proof of Theorem~\ref{thm: guarantee of EM algorithm} follows directly from Lemma~\ref{lem: contraction}, Corollary~\ref{cor: sample updates} and Lemma~\ref{lem: validity of initial estimates}.
\end{proof}

\section{Extension to Weighted Graphs} \label{ap: weighted}
In this section, we briefly present an extension of the model considered in
the main text to the case where both the
latent graph and the observed graphs may
have a broader class of edge noise distributions,
including the possibility of weighted edges.
As before, denote by $c \in [K]^n$
the vector of vertex community memberships,
and let $A \in \R^{n \times n}$ be the symmetric adjacency matrix of a
(possibly weighted) graph.
We assume that each of the $N$ networks
$\{ A^{(m)} \}_{m=1}^N$ has independent edges distributed as
\begin{equation*}
A^{(m)}_{ij} \sim f( A^{(m)}_{ij} \mid A_{ij} )
\end{equation*}
for some distribution $f$ parameterized by $A_{ij}$.
Suppose further that the upper-triangular entries of $A$
are themselves distributed according to vertex community memberships.
That is, there is a distribution $g$ parameterized by the entries of a
$K$-by-$K$ array $B$,
\begin{equation*}
A_{ij} \sim g( A_{ij} \mid B_{c_i c_j} ).
\end{equation*}
The setting of the main text is a special case, with $f$ a Bernoulli distribution with probability of success equal to $P_{ij} 1\{A_{ij} = 0\} + (1-Q_{ij})1\{A_{ij} = 1\}$, and $g$ corresponding to a stochastic block model with connectivity matrix $B$.   

We are interested in choices of $f$ and $g$ 
for which results akin to Theorem~\ref{thm: guarantee of EM algorithm} hold under reasonable assumptions.
Abstracting the algorithm from Section~\ref{subsec: em-type algorithm}
suggests the following procedure for estimating $B$, $A$ and $c$ 
from the observed graphs $\{ A^{(m)} \}_{m=1}^N$.
Suppose that we have initial estimates $\hat{A}$ and $\hat{c}$ for $A$ and $c$,
obtained, for example, by averaging the observed networks followed by
spectral clustering.
We then repeat the following two  steps for $T$ iterations:
\begin{enumerate}
\item Alternate the following until convergence:   
  \begin{enumerate}
  \item Estimate $\{ B_{k \ell} : 1 \le k \le \ell \le K \}$ by
\begin{equation} \label{eq:update:what}
\hat{w}_{k \ell} = \arg \max_w
                \prod_{i,j : \hat{c}_i=k, \hat{c}_j=\ell} g( \hat{A}_{ij} \mid w ).
\end{equation}
\item Estimate $\{ A_{ij} : 1 \le i < j \le n \}$ by
\begin{equation} \label{eq:update:Ahat}
\hat{A}_{ij} = \arg \max_\alpha g( \alpha \mid \hat{w}_{\hat{c}_i,\hat{c}_j} )
                        \prod_{m=1}^N f( A^{(m)}_{ij} \mid \alpha )
\end{equation}
\end{enumerate}
\item Update $\hat{c}$ via spectral clustering of $\hat{A}$.
\end{enumerate}
Note that we have assumed that the updates \eqref{eq:update:what} and~\eqref{eq:update:Ahat}
are computed exactly, as will be possible for certain parametric choices
of $f$ and $g$, but that in other cases it may suffice to simply approximate
this optimization to some degree of precision.

In order to obtain a result similar to 
Theorem~\ref{thm: guarantee of EM algorithm},
we need conditions on $f$ and $g$ that guarantee the following
informally stated properties:
\begin{enumerate}
\item Spectral clustering of $A$ recovers a 
	\label{item:spectral1}
	large fraction of the community labels.
\item The initial estimate $\hat{A}$ based on $\{ A^{(m)} \}_{m=1}^N$
	\label{item:spectral2}
	concentrates around the true adjacency matrix $A$ in spectral norm
	(which implies that spectral clustering of $\hat{A}$ is a reasonable
	approximation to the spectral clustering of $A$,
	and hence recovers a large enough fraction of the community labels).
\item The distribution $g(A_{ij} \mid B_{k \ell})$ is such that
	the estimate $\hat{w}_{k \ell}$ based on
	$\{ A_{ij} : \hat{c}_i=k, \hat{c}_j=\ell \}$
	concentrates about the true value $w_{k \ell}$
	when suitably many entries of $\hat{c}$ are correct.
	\label{item:estimate1}
\item The distribution $f( A^{(m)}_{ij} \mid A_{ij} )$ is such that
	the estimate $\hat{A}_{i j}$ of $A_{ij}$ based on the samples
	$\{ A^{(m)}_{ij}: m=1,2,\dots,N \}$ concentrates
	suitaby well about the true value $A_{ij}$.
	\label{item:estimate2}
\end{enumerate}
Note that we state these conditions under the assumption that community
memberships are estimated via spectral clustering, hence
the spectral norm concentration required in condition~\ref{item:spectral2}.
All four above conditions depend, ultimately,
on certain concentration inequalities holding,
which will depend upon the choice of model.
For example, the condition described in~\ref{item:spectral1}
and~\ref{item:spectral2} can be ensured
using machinery similar to that in \cite{Lyzinski&Sussman&Tang&Athreya&Priebe2014} and \cite{Levin&Athreya&Tang&Lyzinski&Priebe2017}
to guarantee that the eigenvalues and eigenvectors of
$\hat{A}$ and $A$ are suitably close, provided the entries
$(\hat{A}-A)_{ij}$ are zero-mean with suitably-bounded moments.
We leave a precise
statement of the most general analogue of 
Theorem~\ref{thm: guarantee of EM algorithm} for future work,
and sketch one natural approach to the general setting below.  

Suppose we take 
$f( A^{(m)}_{ij} \mid A_{ij} )$ to be an exponential family,
so that
\begin{equation} \label{eq:def:f}
  f( x \mid \alpha ) = h(x) \exp\{ \alpha x - \BB(\alpha) \},
\end{equation}
where $\alpha \in \R$ is a parameter,
$h : \R \rightarrow \R$ is the base measure of the exponential family
and $\BB$ is an appropriately-chosen log-partition function.
Then,  using basic properties of exponential families, it is easy to construct a distribution $g$
such that inference as in the Algorithm of
Section~\ref{subsec: em-type algorithm} is feasible.  
By Proposition 1.6.1 in \cite{Bickel&Doksum}, given the exponential
family in \eqref{eq:def:f}, we can find an exponential family with
parameter $w \in \R^2$ that is a conjugate prior to the distribution $f$,
with density of the form
\begin{equation} \label{eq:def:g}
g( \alpha \mid w ) = \exp\{ w^T(\alpha, \BB(\alpha)) - \ZZ (w) \},
\end{equation}
where $\ZZ$ is a log-partition function.
By basic properties of conjugate priors,
the posterior
$\tilde{f}(\alpha \mid x; w) \propto f( x \mid \alpha )g( \alpha \mid w)$
is an exponential family of the same form as $g( \alpha \mid w)$.
This is particularly useful, since maximum likelihood estimates
of exponential families are typically easy to compute, 
and thus the updates in ~\eqref{eq:update:what}
and~\eqref{eq:update:Ahat} are feasible.
Further, a suitable choice of $f$ will ensure that the concentration
inequalities in the conditions above hold.   
For instance, it is straightforward to verify that choosing $f$ to have at most subgamma tails \cite{Boucheron&Lugosi&Massart2013,Tropp2012}
leads to an analogue of Theorem~\ref{thm: guarantee of EM algorithm}
for this broader class of models.

As an illustrative example, consider the case where
$f$ is the density of an exponential distribution with
scale parameter $A_{ij}$, and $g$ is the density of a gamma distribution with
shape parameter $\kappa > 0$ and scale parameter $\sigma > 0$.
Under this approach, the matrix $B$ in the main text becomes a
$K$-by-$K$-by-$2$ array of parameters, with 
$B_{k \ell} = (\kappa_{k \ell}, \sigma_{k \ell})^T \in \R^2$
for all $k,\ell \in [K]$.
Taking $c_i = k$ and $c_j=\ell$ for ease of notation, we have
\begin{equation*} \begin{aligned}
f( A^{(m)}_{ij} \mid A_{ij} )
	&= \frac{ \exp\{ -A^{(m)}_{ij} / A_{ij} \} }{  A_{ij} } \\
g( A_{ij} \mid \kappa_{k \ell}, \sigma_{k \ell} )
&= \frac{ A_{ij}^{\kappa_{k \ell} - 1} \exp\{ -A_{ij}/\sigma_{k \ell} \} }
	{ \sigma_{k \ell}^{\kappa_{k \ell} }
		\Gamma( \kappa_{k \ell}, \sigma_{k \ell} ) },
\end{aligned} \end{equation*}
where $\Gamma$ denotes the gamma function.
Since we have chosen $g$ to be the conjugate prior to $f$,
and because maximum-likelihood estimators for the exponential and gamma
distributions can be computed with relative ease
(via numerical methods in the case of the gamma distribution),
the maximization problems in~\eqref{eq:update:what}
and~\eqref{eq:update:Ahat} are feasible.

Under this model, we can also ensure that conditions~\ref{item:spectral1}
and~\ref{item:spectral2}, as follows.
Since the entries of the matrix $A$ have subgamma tails
\cite{Boucheron&Lugosi&Massart2013}, one can show using the
results in \cite{Tropp2012} that $A$ concentrates about its mean
$\mathbb{E} A_{ij} = \kappa_{c_i c_j} \sigma_{c_i c_j}$ in spectral norm.
Thus, $\| A - \mathbb{E} A \| = O( h(\kappa,\sigma) \sqrt{n} )$ for a suitable
function $h$.
Similarly, the mean of the entries $N^{-1} \sum_{m=1}^N A^{(m)}_{ij}$
concentrate about their expectation $\kappa_{c_i c_j} \sigma_{c_i c_j}$,
and the spectral norm error grows as
$\| N^{-1} \sum_{m=1}^N A^{(m)} - A \| = O( \sqrt{n/N} h(1,A) )$.
Under suitable assumptions on the growth rates of $N$ and $n$, the community
sizes, and the parameters $A,\kappa,\sigma$, the techniques from
\cite{Lyzinski&Sussman&Tang&Athreya&Priebe2014,Levin&Athreya&Tang&Lyzinski&Priebe2017}
can be used to turn these two spectral norm bounds into a guarantee that
an asymptotically vanishing fraction of the vertices are mislabeled,
thus ensuring conditions \ref{item:spectral1} and \ref{item:spectral2}.
The fact that $\{ A_{ij}^{(m)}: m=1,2,\dots,N \}$ are drawn
i.i.d.\ from an exponential distribution ensures that
the MLE in~\eqref{eq:update:Ahat} concentrates about $A_{ij}$
for all $i,j \in [n]$, as required by condition~\ref{item:estimate2}.
This fact, along with the fact that the initial
estimate $\hat{c}$ recovers most entries of $c$
(ensured by conditions~\ref{item:spectral1} and~\ref{item:spectral2})
similarly imply that the maximizer in~\eqref{eq:update:what} is close
to the true value of $B_{c_i c_j} = (\kappa_{c_i c_j}, \sigma_{c_i c_j})$,
thus guaranteeing condition~\ref{item:estimate1}.
This general analysis can, of course, be applied to any choice of exponential
family $f$ and conjugate prior $g$
so long as $f$ and $g$ have suitably light tails.

\bibliography{allref}
\bibliographystyle{abbrv}

\end{document}